\numberwithin{equation}{section}
\numberwithin{figure}{section}
\theoremstyle{plain}
\newtheorem{thm}{\protect\theoremname}[section]
  \theoremstyle{plain}
  \newtheorem{lem}[thm]{\protect\lemmaname}
  \theoremstyle{plain}
  \newtheorem{prop}[thm]{\protect\propositionname}
  \theoremstyle{remark}
  \newtheorem*{rem*}{\protect\remarkname}
  \theoremstyle{remark}
  \newtheorem{rem}[thm]{\protect\remarkname}
  \providecommand{\lemmaname}{Lemma}
  \providecommand{\propositionname}{Proposition}
  \providecommand{\remarkname}{Remark}
\providecommand{\theoremname}{Theorem}
\begin{document}

\title{Symbolic dynamics in the restricted elliptic isosceles 
three body problem}

\author{Marcel Guardia\textsuperscript{a,b}}
\email{marcel.guardia@upc.edu}
\author{Jaime Paradela\textsuperscript{a,*}}\thanks{* Corresponding author}
\email{jaime.paradela@upc.edu}
\author{Tere M. Seara\textsuperscript{a,b}}
\email{tere.m-seara@upc.edu}
\author{Claudio Vidal\textsuperscript{c}}
\email{clvidal@ubiobio.cl}

\address[A]{Departament de Matem{\`a}tiques \&IMTECH, Universitat Polit{\`e}cnica de Catalunya\\
Diagonal 647, 08028 Barcelona, Spain}

\address[B]{Centre de Recerca Matem{\`a}tica, Campus de Bellaterra\\
Edifici C, 08193 Barcelona, Spain}

\address[C]{Departamento de Matem{\'a}ticas, Facultad de Ciencias, Universidad del B{\'i}o-B{\'i}o\\
Casilla 5C, VIII Regi{\'o}n, Concepci{\'o}n, Chile}

\begin{abstract}
{\normalsize{}The restricted elliptic isosceles three body problem
(REI3BP) models the motion of a massless body under the influence
of the Newtonian gravitational force caused by two other bodies called
the primaries. The primaries of masses $m_{1}=m_{2}$ move along a
degenerate Keplerian elliptic collision orbit (on a line) under their 
gravitational
attraction, whereas the third, massless particle, moves on the plane
perpendicular to their line of motion and passing through the center
of mass of the primaries. By symmetry, the component of the angular
momentum $G$ of the massless particle along the direction of the
line of the primaries is conserved. }{\normalsize \par}

{\normalsize{}We show the existence of symbolic dynamics in the REI3BP
for large $G$ by building a Smale horseshoe on a certain subset of
the phase space. As a consequence we deduce that the REI3BP possesses 
oscillatory
motions, namely orbits which leave every bounded region but return
infinitely often to some fixed bounded region. The proof relies on
the existence of transversal homoclinic connections associated to an invariant
manifold at infinity. Since the distance between the stable and unstable
manifolds of infinity is exponentially small, Melnikov theory does
not apply.}{\normalsize \par}
\end{abstract}

\maketitle

\section{\label{sec:Introduction}Introduction}

The restricted three body problem studies the motion of three bodies,
one of them massless, under Newtonian gravitational force. The massless
body does not exert any force on the other two, the
primaries, and move therefore according to Kepler laws. As a particular
case, in the restricted elliptic isosceles three body problem (REI3BP),
the primaries move along a degenerate ellipse and the third (massless)
body moves on the perpendicular plane to their line of motion passing through 
their center of mass, which is invariant. In this configuration the primaries
collide, but since it is a Keplerian motion its collisions can be
regularized. In a coordinate system with origin at the center of mass of the primaries, the position of the primaries is given by
\begin{equation}
q_1\left(t\right)=\frac{\rho\left(t\right)}{2}\left(0,0,1\right)\qquad q_2\left(t\right)=\frac{\rho\left(t\right)}{2}\left(0,0,-1\right),
\end{equation} 
where 
\begin{equation}
\rho\left(t\right)=1-\cos E\left(t\right)
\end{equation}
and the eccentric anomaly $E\left(t\right)$ satisfies
\begin{equation}
t=E-\sin E.
\end{equation}
Introducing polar coordinates $\left(r,y,\alpha,G\right)$ in the plane of motion of the third body, where $\left(y,G\right)$ denote the conjugated momenta to $\left(r,\alpha\right)$ the REI3BP is Hamiltonian with respect to  
\begin{equation}\label{eq:Hamiltoniano_original}
H\left(r,y,G,t\right)=\frac{y^{2}}{2}+\frac{G^{2}}{r^{2}}-\frac{1}{\sqrt{r^{2}+\frac{\rho^{2}\left(t\right)}{4}}}.
\end{equation}
It is inmediate to check that $G$ is a conserved quantity so the REI3BP is a system of $1+1/2$ degrees of freedom. We fix $G\neq0$ in order to avoid triple collisions. 

In  \cite{Brandao2008} authors the  study the existence of symmetric periodic 
solutions of the Hamiltonian system associated to 
\eqref{eq:Hamiltoniano_original}.
In the present paper we prove the existence of chaotic dynamics in the REI3BP for large values of the angular momentum $G$, by building a Smale horseshoe with infinitely many symbols on a certain subset of the phase space. To build this horseshoe we first prove that the stable and unstable manifold associated to a certain invariant manifold intersect transversally, giving rise to homoclinic connections to the invariant manifold.  

As a consequence, from the way the horseshoe is built, we deduce the existence  
of different types of orbits  of the REI3BP according to their behavior as 
$t\to\pm\infty$. In particular, the existence of infinitely many 
periodic orbits of arbitrary large period is obtained. A complete 
classification of the orbits
of the three body problem according to their final motion was already
established by Chazy in 1922 (see \cite{Arnold1988}). For the restricted
three body problem (either planar or spatial, circular or elliptic)
the possibilities reduce to four:
\begin{itemize}
\item $H^{\pm}\text{(hyperbolic)}:\left\Vert r\left(t\right)\right\Vert \to\infty\text{ and }\left\Vert \dot{r}\left(t\right)\right\Vert \to c>0\text{ as }t\to\pm\infty.$
\item $P^{\pm}\text{(parabolic)}:\left\Vert r\left(t\right)\right\Vert \to\infty\text{ and }\left\Vert \dot{r}\left(t\right)\right\Vert \to0\text{ as }t\to\pm\infty.$
\item $B^{\pm}\text{(bounded)}:\limsup_{t\to\pm\infty}\left\Vert r\left(t\right)\right\Vert <\infty.$
\item $OS^{\pm}\text{(oscillatory)}:\limsup_{t\to\pm\infty}\left\Vert r\left(t\right)\right\Vert =\infty\text{ and }\liminf_{t\to\pm\infty}\left\Vert r\left(t\right)\right\Vert <\infty.$ 
\end{itemize}
Examples of hyperbolic, parabolic
and bounded motions were already known by Chazy (in particular they are present in the two body problem). However, no examples
of oscillatory motions  were known until Sitnikov 
\cite{Sitnikov1960}
proved their existence on a certain symmetric configuration of the
spatial restricted three body problem, now called \textit{the Sitnikov
example}. We shall prove that any past-future combination of the four possible
final motions exists in the REI3BP. 

The connection between chaotic dynamics and the existence of different types of 
final motions was first devised by Moser \cite{moser1973stable}, who gave a new 
proof of the existence of oscillatory motions in the Sitnikov model. Moser's 
approach relying on the connection between final motions,
transversal homoclinic points and symbolic dynamics has been successfully
extended to provide more examples of these motions \cite{SimoL80, Llibre1980, Moeckel84, Moeckel07,  GorodetskiK12, Guardia2016}. When dealing with
perturbations of integrable systems the classical strategy for showing
the existence of transversal intersections between the invariant manifolds
is to find non-degenerate zeros of the Melnikov function, which gives
an asymptotic expression for the distance between them. However, when
considering fast non-autonomous perturbations, the Melnikov function
is exponentially small with respect to the perturbative parameter
and the validity of Melnikov theory is not justified. This difficulty
can be solved when the system in consideration has two perturbative
parameters and an exponentially smallness condition between them is
assumed. This was the approach in \cite{SimoL80}, where the existence
of oscillatory motions in the restricted planar circular three body
problem (RPC3BP) was shown for values of the mass ratio exponentially
small compared to the value of the inverse of the Jacobi constant. 

The study of the existence of intersections between invariant manifolds
for fast non-autonomous perturbations without assuming smallness conditions
on extra parameters requires showing that the distance between invariant
manifolds is indeed exponentially small. This problem, now known as
\textit{exponentially small splitting of separatrices}, has drawn
remarkable attention in the past decades, but, due to its difficulty
most of the available results concern concrete models 
\cite{Holmes1988,Delshams1992,Gelfreich00,Guardia2010, Gaivao2011}
or in general systems under very restrictive hypothesis to be applicable
to problems in Celestial Mechanics 
\cite{Delshams1997,Gelfreich1997,Treshev97,Guardia2012,Baldoma2012,Baldoma2004a, Gaivao2012}. Following these ideas, \cite{Guardia2016} proves the 
transversality of certain invariant manifolds of the RPC3BP for 
any mass ratio and large Jacobi constants, extending the result in 
\cite{Llibre1980} of existence of oscillatory motions to any mass ratio.

Following the same approach in \cite{Guardia2016},  the present paper proves 
the exponentially small splitting
of separatrices in a real problem arising from Celestial Mechanics,
the aforementioned REI3BP, under the only assumption of large angular
momentum $G$. 
It is worth pointing out that the Hamiltonian \eqref{eq:Hamiltoniano_original} is, in general, far from being integrable. However, we will see in Section \ref{sec:Description_of_the_proof} that for orbits with large angular momentum $G$, the Hamiltonian \eqref{eq:Hamiltoniano_original} can be considered as a fast non-autonomous perturbation of the two body problem, which is integrable.

From our result we deduce the existence of transverse
homoclinic connections and we are able to build a Smale horseshoe
on a certain subset which is close to the homoclinc points. This result
completes the previous work \cite{Brandao2017}, where the existence
of symbolic dynamics in the EIR3BP was investigated for large values
of $G$ using numerical techniques for analyzing the exponentially
small splitting of separatrices.

The main result of the present paper, which gives the existence
of chaotic dynamics in the REI3BP, is the following.
\begin{thm}
\label{thm:Main_theorem}Denote by  $\psi$ the Poincar{\'e} map induced by the flow of the Hamiltonian \eqref{eq:Hamiltoniano_original} on the section
 $\varSigma_+=\left\{\left(r,y,t\right)\in\mathbb{R}_+\times\mathbb{R}\times\mathbb{T}:y=0,\ \dot{y}>0\right\}$. 
Then, there exists $0<G^{*}<\infty$  such that for $G>G^{*}$ there exists an invariant
set $S\subset\varSigma_+$ such that the dynamics of $\psi:S\to S$
is topologically conjugated to the shift
\[
\begin{split}\sigma:\mathbb{N}^{\mathbb{Z}} & \to\mathbb{N^{Z}}\\
\left\{ a_{n}\right\} _{n\in\mathbb{\mathbb{Z}}} & \mapsto\left\{ a_{n-1}\right\} _{n\in\mathbb{Z}}
\end{split}
\]

Namely $\psi$ has a Smale horseshoe of infinite symbols.
\end{thm}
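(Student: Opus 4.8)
The plan is to reduce the theorem to a standard consequence of the existence of a transversal homoclinic orbit to a (non-hyperbolic) invariant manifold ``at infinity'', following the Moser--Llibre--Sim\'o strategy. First I would set up the geometric picture established in the earlier part of the paper: for $G>G^{*}$ the Hamiltonian \eqref{eq:Hamiltoniano_original}, after a McGehee-type change of variables (e.g. $r=2/x^{2}$ or similar) that brings infinity to a finite locus, possesses an invariant manifold $\Lambda_\infty$ at infinity which is foliated by $2\pi$-periodic (in $t$) orbits parametrized by the asymptotic value of the ``radial'' momentum; on $\Lambda_\infty$ the dynamics is trivial and the parabolic orbits ($y\to 0$, $r\to\infty$) form its stable and unstable sets $W^{s}(\Lambda_\infty)$, $W^{u}(\Lambda_\infty)$. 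The main analytic input already proved (the exponentially small splitting estimate) is that $W^{s}(\Lambda_\infty)$ and $W^{u}(\Lambda_\infty)$ intersect transversally along a homoclinic manifold; fixing the section $\varSigma_+$ this yields a transversal homoclinic point $p_\infty$ for the Poincar\'e map $\psi$, together with its associated homoclinic orbit asymptotic to $\Lambda_\infty$ in both time directions.

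Next I would construct the horseshoe symbol-by-symbol. Near $\Lambda_\infty$ the dynamics, although parabolic rather than hyperbolic, is still of ``saddle'' type with polynomial (rather than exponential) rates; the key is that an orbit can spend an arbitrarily large number $N$ of iterates shadowing $\Lambda_\infty$ before leaving along the unstable direction, return near $\Lambda_\infty$ following the transversal homoclinic excursion, and repeat. Thus a bi-infinite sequence $\{a_n\}_{n\in\mathbb{Z}}\in\mathbb{N}^{\mathbb{Z}}$ is realized by an orbit that, on its $n$-th excursion, loiters near $\Lambda_\infty$ for roughly $a_n$ iterates. To make this precise I would pick a small neighbourhood $U$ of $\Lambda_\infty$ in $\varSigma_+$ and a small neighbourhood $V$ of $p_\infty$, define for each $m\in\mathbb{N}$ a ``vertical strip'' $V_m\subset V$ consisting of points whose orbit spends exactly $m$ iterates in $U$ before the next homoclinic return, and show (i) $\psi^{k_m}(V_m)$ is a ``horizontal strip'' crossing every $V_{m'}$ properly, and (ii) the images contract in the appropriate direction so that a graph-transform/Conley--Moser argument applies. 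The invariant set is $S=\bigcap_{n}\psi^{-n}\!\big(\bigcup_m V_m\big)$, and the itinerary map $S\to\mathbb{N}^{\mathbb{Z}}$, $q\mapsto\{a_n(q)\}$, conjugates $\psi|_S$ to the shift $\sigma$.

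The step I expect to be the main obstacle is verifying the Conley--Moser (``correct alignment / $\lambda$-lemma'') conditions in the absence of hyperbolicity: since $\Lambda_\infty$ is parabolic, the relevant estimates on the stable/unstable fibrations and on the distortion of the strips $V_m$ decay only polynomially in $m$, not exponentially, so one must be careful that the number $k_m$ of iterates of the homoclinic excursion stays bounded while the loitering time grows, and that the cone fields are still preserved. Concretely I would quantify the passage near $\Lambda_\infty$ using the known asymptotics of the parabolic invariant manifolds (a normal-form / McGehee coordinate computation giving the $1/t$-type decay), and then invoke a version of the Smale--Birkhoff homoclinic theorem adapted to parabolic points, as in \cite{moser1973stable,Llibre1980,SimoL80,Guardia2016}; this is where the transversality of $W^{s}(\Lambda_\infty)$ and $W^{u}(\Lambda_\infty)$ proved earlier is used in an essential, quantitative way. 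Finally, reading off the four Chazy final motions is immediate from the itinerary: sequences that are eventually constant equal to a fixed $m$ give bounded or periodic orbits, sequences with $a_n\to\infty$ as $n\to+\infty$ give parabolic/oscillatory behaviour forward in time, and so on, yielding all past--future combinations and, in particular, the oscillatory motions claimed in the abstract.
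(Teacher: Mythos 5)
Your proposal follows essentially the same route as the paper: prove transversality of $\mathcal{W}^{s}_{\infty}$ and $\mathcal{W}^{u}_{\infty}$ (the analytic core, Theorem \ref{thm:Main Theorem 2-1}), then run the Moser/Conley--Moser strip construction adapted to the parabolic invariant object at infinity, with the symbol recording the time spent far from the primaries and the main technical point being a $C^{1}$ lambda-lemma in McGehee coordinates to compensate for the merely polynomial rates. The only slight imprecisions are that here $\varLambda$ is a single parabolic periodic orbit rather than a family parametrized by asymptotic radial momentum, and that the paper encodes the symbols as the integer part of the return time to the perihelion section $\varSigma_{+}$ (number of binary collisions of the primaries between consecutive close approaches) rather than as a count of iterates inside a neighbourhood of infinity; neither affects the structure of the argument.
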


An immediate consequence of Theorem \ref{thm:Main_theorem} is the existence of infinitely many periodic orbits in the system associated to Hamiltonian \eqref{eq:Hamiltoniano_original}. Moreover,  from the way the Smale horseshoe of Theorem \ref{thm:Main_theorem}
is built, we obtain the second main result (see Section \ref{sec:Description_of_the_proof}
for a detailed exposition of this connection). 
\begin{thm}
\label{thm:Main_Theorem_b}Denote by $X^{+}$ (respectively $Y^{-})$
either $H^{+},P^{+},B^{+}$ or $OS^{+}$ (respectively $H^{-},P^{-},B^{-}$
or $OS^{-})$. Then, there exists $G^{*}<\infty$ such that if $G>G^{*}$
we have 
\[
X^{+}\cap Y^{-}\neq\emptyset
\]
for all possible combinations of $X^{+}$ and $Y^{-}$. In particular,
the Hamiltonian system \eqref{eq:Hamiltoniano_original} posses oscillatory
orbits, that is, orbits such that 
\[
\limsup_{t\rightarrow\pm\infty}\left|r\left(t\right)\right|=\infty\quad\text{and}\quad\liminf_{t\rightarrow\pm\infty}\left|r\left(t\right)\right|<\infty.
\]
\end{thm}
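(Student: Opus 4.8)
The plan is to deduce Theorem \ref{thm:Main_Theorem_b} from Theorem \ref{thm:Main_theorem} by unpacking the structure of the Smale horseshoe. The key point is that the infinite symbol alphabet $\mathbb{N}$ is not arbitrary: it must be chosen so that the symbol $a_n$ records, roughly, how many iterates of $\psi$ an orbit spends ``near infinity'' between consecutive close passages to the fixed bounded region. Concretely, I would first recall (from the construction in Section \ref{sec:Description_of_the_proof}) that the horseshoe is built near a transversal homoclinic orbit to the invariant manifold at infinity: the stable and unstable manifolds of infinity intersect transversally, and a large symbol $a_n$ corresponds to an orbit that follows the invariant manifold at infinity — where $r$ is arbitrarily large — for approximately $a_n$ iterates before returning near the homoclinic point, where $r$ is bounded by a fixed constant. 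Thus the itinerary $\{a_n\}_{n\in\mathbb{Z}}$ directly encodes the asymptotic size of $r$ along the orbit.

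With this dictionary in hand, the proof reduces to exhibiting, for each of the four past behaviors $Y^-$ and four future behaviors $X^+$, a bi-infinite symbol sequence realizing it, and then invoking the topological conjugacy in Theorem \ref{thm:Main_theorem} to produce an orbit of $\psi$, hence of the Hamiltonian flow, with that itinerary. For the \emph{oscillatory} case $OS^{\pm}$, I would take a sequence $\{a_n\}$ with $\limsup_{n\to\pm\infty} a_n = \infty$ but $\liminf_{n\to\pm\infty} a_n$ bounded (e.g. $a_n$ alternating between $1$ and $|n|$): infinitely many long excursions near infinity force $\limsup |r(t)| = \infty$, while infinitely many returns to the homoclinic point force $\liminf|r(t)| < \infty$. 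For the \emph{bounded} case $B^{\pm}$, take $a_n$ eventually constant (or periodic) in the relevant time direction, so the orbit stays forever in a fixed compact region. The \emph{parabolic} case $P^{\pm}$ requires more care: here one wants $a_n \to \infty$ monotonically as $n\to\pm\infty$, so that the orbit leaves every bounded set and never returns; a shadowing/inclination-lemma argument along the unstable (resp. stable) manifold of infinity shows that such a sequence corresponds to an orbit asymptotic to the parabolic invariant manifold, along which $\|\dot r\|\to 0$. The \emph{hyperbolic} case $H^{\pm}$ is obtained similarly but by taking the orbit to escape along a genuinely hyperbolic Keplerian branch rather than the parabolic manifold; this may need a slight enlargement of the symbol space or a separate but parallel argument, since strictly hyperbolic escape is not itself part of the horseshoe sitting on the energy level of infinity. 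Taking the product of a past itinerary realizing $Y^-$ (for $n<0$) with a future itinerary realizing $X^+$ (for $n\geq 0$) yields a single orbit in $X^+\cap Y^-$, proving all sixteen combinations are nonempty. In particular the oscillatory–oscillatory choice gives the claimed oscillatory orbits.

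I expect the main obstacle to be the \emph{parabolic and hyperbolic} cases rather than the bounded and oscillatory ones. For those two it is not enough to have a symbol of arbitrarily large but finite value; one needs orbits that are genuinely \emph{asymptotic} to infinity, which means controlling the limit behavior of a sequence of horseshoe pieces as the symbol tends to infinity and verifying that the corresponding orbit converges to the appropriate (parabolic, resp. hyperbolic) motion — this is a compactness/continuity argument at the boundary of the symbolic parametrization, and it relies on quantitative control of how the horseshoe pieces accumulate onto the invariant manifold of infinity and its stable/unstable manifolds. The oscillatory and bounded cases, by contrast, only require finitely-valued symbols appearing infinitely (resp. cofinitely) often, so they follow almost immediately once the symbol-to-size-of-$r$ dictionary is established and the topological conjugacy of Theorem \ref{thm:Main_theorem} is applied.
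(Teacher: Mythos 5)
Your dictionary between symbols and the size of $r$ is essentially the paper's (with the small correction that $a_n$ is not a number of iterates of $\psi$ but the integer part of the return \emph{time} between two consecutive iterates, i.e.\ the number of binary collisions of the primaries between consecutive perihelion passages), and your treatment of the bounded and oscillatory cases matches the paper's. The genuine gap is in the parabolic case, and more structurally in your choice of starting point. A sequence of \emph{finite} symbols with $a_n\to\infty$ monotonically does not produce a parabolic orbit: every symbol $a_n<\infty$ corresponds to an actual return to the section $\varSigma_+=\{\tilde y=0,\ \dot{\tilde y}>0\}$, i.e.\ to a perihelion passage inside the fixed bounded region where the horseshoe lives. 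Such an orbit therefore satisfies $\liminf_{t\to+\infty}|r(t)|<\infty$ and is oscillatory, not parabolic; no shadowing or inclination-lemma argument can change this, because the itinerary itself forces infinitely many returns. Parabolic escape, like hyperbolic escape, requires the orbit to leave $\varSigma_+$ \emph{permanently} after some iterate, which can only be encoded by a sequence that terminates.

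This is exactly why the paper does not deduce Theorem \ref{thm:Main_Theorem_b} from Theorem \ref{thm:Main_theorem} alone (the conjugacy to the shift on $\mathbb{N}^{\mathbb{Z}}$ only sees bi-infinite finite itineraries, hence only bounded and oscillatory motions), but from the stronger Theorem \ref{thm:Main Theorem 2-2}: the conjugacy extends to the compactified alphabet $\bar A$, whose sequences may begin or end with the symbol $\infty$. A terminal $\infty$ means the orbit never meets $\varSigma_+$ again; by Proposition \ref{prop:proposiciónVidalintersecc} and the discussion following it, the set of such initial conditions is one of the two components cut out by $\tilde\gamma^{s}=\mathcal{W}^{s}\cap\varSigma_+$ together with its boundary, the interior points escaping hyperbolically (positive asymptotic radial velocity) and the boundary points, lying on $\mathcal{W}^{s}_\infty$, escaping parabolically. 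Your closing remark about needing ``a slight enlargement of the symbol space'' for the hyperbolic case is the right instinct — but it applies to the parabolic case as well, and it is precisely the $\bar A$, $\bar\chi$ part of Theorem \ref{thm:Main Theorem 2-2}, not an optional refinement.
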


As commented above, the proof of Theorem \ref{thm:Main_theorem} relies
on two main ingredients: establishing the existence of transversal
intersections between the invariant manifolds $\mathcal{W}_{\infty}^{u,s}$
associated to a periodic orbit at infinity and showing the existence
of a Smale horseshoe on a certain subset close to the homoclinc points.
The latter follows from the arguments presented in \cite{moser1973stable}
without significant modifications. These arguments are sketched in
Section \ref{sec:Description_of_the_proof} for the sake of self-completeness. 

For the analysis of the splitting of the invariant manifolds, we use
the fact that $\mathcal{W}_{\infty}^{u,s}$ are Lagrangian submanifolds
so they can be parametrized as graphs which satisfy the Hamilton-Jacobi
equation associated to $H$. 
Then, we study  solutions to this equation
in a suitable complex domain to get exponentially small  asymptotics for
the distance between $\mathcal{W}_{\infty}^{s}$ and $\mathcal{W}_{\infty}^{u}$.
In order to obtain the appropiate exponent these parameterizations must be 
analyzed in a neighbourhood
$\mathcal{O}\left(G^{-3}\right)$ of the singularities of the unperturbed
homoclinic $\left(G\to\infty\right).$

The document is organized as follows. In Section \ref{sec:Description_of_the_proof}
we introduce the invariant manifolds at infinity and discuss the proofs
and connection between Theorem \ref{thm:Main_theorem} and Theorem
\ref{thm:Main_Theorem_b}. In particular, from Theorem \ref{thm:Main Theorem 2-1},
which claims the existence of transverse intersections of the infinity
manifolds, we build a Smale horseshoe that is then used to show the
existence of any past-future combination of final motions. The rest
of the paper is devoted to the proof of Theorem \ref{thm:Main Theorem 2-1}.
We discuss the integrable system $(G\to\infty)$ and its homoclinic
manifold in Section \ref{sec:The-integrable-system}. Section \ref{sec:Parametrizations-of-the}
is devoted to rewrite the problem of existence of the infinity manifolds
as a fixed point equation. We solve this equation and bound the solution
in a suitable complex domain in Section \ref{sec:Existence-of-the}.
In Section \ref{sec:The-difference-between} we show that the distance
between the invariant manifolds is given, up to first order, by the
Melnikov function and then we compute its asymptotic expansion for
large $G$ in Section \ref{sec:Computation-of-the}. 

\section{\label{sec:Description_of_the_proof}Description of the proof of
theorems \ref{thm:Main_theorem} and \ref{thm:Main_Theorem_b}}

We notice from the Hamiltonian \eqref{eq:Hamiltoniano_original} that the angular momentum $G$  is a conserved quantity. Therefore, we apply the conformally symplectic change of variables 
\[
r=G^{2}\tilde{r},\quad y=G^{-1}\tilde{y},\quad t=G^{3}s,
\]
to the equations of motion associated to the Hamiltonan \eqref{eq:Hamiltoniano_original}
to obtain a new system which is also Hamiltonian with respect to the
scaled Hamiltonian. 
\begin{equation}
\begin{split}\tilde{H}\left(\tilde{r},\tilde{y},s;G\right) & =G^{2}H\left(G^{2}\tilde{r},G^{-1}\tilde{y},G^{3}s\right)\\
 & =\frac{\tilde{y}^{2}}{2}+\frac{1}{\tilde{r}^{2}}-\frac{1}{\tilde{r}}+U\left(\tilde{r},G^{3}s\right)
\end{split}
\label{eq:rescaled_hamiltonian}
\end{equation}
where 
\begin{equation}
U\left(\tilde{r},G^{3}s\right)=\frac{1}{\tilde{r}}-\frac{1}{\sqrt{\tilde{r}^{2}+\nicefrac{\rho^2\left(G^{3}s\right)}{4G^{4}}}}=\frac{\rho^{2}\left(G^{3}s\right)}{8G^{4}\tilde{r}^{3}}\left(1+\mathcal{O}\left(\frac{1}{\tilde{r}^{2}G^{4}}\right)\right).\label{eq:potential_main_order_term}
\end{equation}
Observe that, for $G$ large, the system associated to the Hamiltonian
\eqref{eq:rescaled_hamiltonian} can be studied as a fast and small non-autonomous
perturbation of the Kepler two-body problem. Adding time $t$ as a phase variable, which we now denote by $\xi$, we see from the equations of
motion associated to the Hamiltonian \eqref{eq:rescaled_hamiltonian}
\begin{equation}
\begin{split}\frac{\text{d}\tilde{r}}{\text{d}s} & =\tilde{y}\\
\frac{\text{d}\tilde{y}}{\text{d}s} & =\frac{1}{\tilde{r}^{3}}-\frac{1}{\tilde{r}^{2}}-\partial_{\tilde{r}}U\\
\frac{\text{d}\xi}{\text{d}s} & =G^{3},
\end{split}
\label{eq:eqs of motion}
\end{equation}
that $\varLambda=\left\{ \left(\tilde r,\tilde y,\xi\right)=\left(\infty,0,\xi\right):\xi\in\mathbb{\mathbb{T}}\right\} $
is a parabolic periodic orbit, which
we will call infinity. 

Denoting by 
$\phi_{s}=(\phi^{\tilde r}_{s},\phi^{\tilde y}_{s},\phi^\xi_{s})$ the flow of the
system \eqref{eq:eqs of motion}, we define the stable and unstable
manifolds of infinity as 
\begin{equation}
\begin{split}\mathcal{W}_{\infty}^{s} & =
\left\{ 
\left(\tilde{r},\tilde{y},\xi\right)\in\mathbb{R}_+\times\mathbb{R}\times\mathbb
{T}:\lim_{s\rightarrow+\infty}\phi_{s}^{\tilde 
r}\left(\tilde{r},\tilde{y},\xi\right)= \infty, \ 
\lim_{s\rightarrow+\infty}\phi_{s}^{\tilde 
y}\left(\tilde{r},\tilde{y},\xi\right)= 0\right\} \\
\mathcal{W}_{\infty}^{u} & =\left\{ \left(\tilde{r},\tilde{y},\xi\right)\in\mathbb{R}_+\times\mathbb{R}\times\mathbb{T}:\lim_{s\rightarrow-\infty}\phi_{s}^{\tilde r}\left(\tilde{r},\tilde{y},\xi\right)=\infty, \ \lim_{s\rightarrow-\infty}\phi_{s}^{\tilde y}\left(\tilde{r},\tilde{y},\xi\right)=0\right\} .
\end{split}
\label{eq:infinity_manifolds}
\end{equation}
The usual way to study the dynamics near infinity is to use McGehee
coordinates $r=2x^{-2}$ which map neighbourhoods of infinity into
bounded domains containing the origin. In particular, the 
periodic orbit $\varLambda$ corresponds to the periodic orbit 
$\left\{ \left(x,y,\xi\right)=\left(0,0,\xi\right)\colon\xi\in\mathbb{T}\right\} $
in McGehee coordinates. This transformation was used in \cite{McGehee1973}
to show that $\mathcal{W}_{\infty}^{u,s}$ exist and are analytic
submanifolds except at infinity, where only $C^{\infty}$ regularity
is proven (see \cite{Baldoma2020a,Baldoma2020b} for more general results).
However, in the present work we prefer to stick to the original variables
since the symplectic form is non canonical in McGehee coordinates.

For $G\to\infty$ the system is integrable since $U\to0$ and therefore $\mathcal{W}_{\infty}^{s}$
and $\mathcal{W}_{\infty}^{u}$ coincide along a two dimensional homoclinic
manifold which is foliated by Keplerian parabolic orbits. Hence, it can
be parametrized by the time section $\xi$ and a suitable time parametrization
$\left(\tilde{r}_{h}\left(v\right),\tilde{y}_{h}\left(v\right)\right)$
of the parabolic orbit. We denote the parametrization of this invariant
manifold as
\begin{equation}
\tilde{z}_{h}\left(v,\xi\right)=\left(\tilde{r}_{h}\left(v\right),\tilde{y}_{h}\left(v\right),\xi\right)\qquad\text{where\qquad}\left(v,\xi\right)\in\mathbb{R}\times\mathbb{T},\label{eq:unperturbed_homoclinic_first}
\end{equation}
and fix the origin of $v$ such that $\tilde{y}_{h}\left(0\right)=0$,
which makes the homoclinic orbit symmetric under the map $v\to-v.$
Some properties of this parametrization are discussed in Section \ref{sec:The-integrable-system}. 

We will prove that in the full problem \eqref{eq:rescaled_hamiltonian}, this two dimensional homoclinic manifold breaks down for $1\ll G<\infty$, and $\mathcal{W}_{\infty}^{s},\mathcal{W}_{\infty}^{u}$
do not longer coincide. 
In order to measure the distance between the
invariant manifolds we introduce the Poincar{\'e} stroboscopic map 
\begin{eqnarray}
\mathcal{P}_{\xi_{0}}:\left\{ \xi=\xi_{0}\right\}  & \rightarrow & \left\{ \xi=\xi_{0}+2\pi\right\} \label{eq:Poincare_map}\\
\left(\tilde{r},\tilde{y}\right) & \mapsto & \mathcal{P}_{\xi_{0}}\left(\tilde{r},\tilde{y}\right)\nonumber 
\end{eqnarray}
so $\mathcal{W}_{\infty}^{s,u}\cap\left\{\xi=\xi_0\right\}$ become invariant curves $\gamma^{s,u}$
(see Figure \ref{figure-1}).

\begin{figure}
\begin{center}
\includegraphics[scale=0.5]{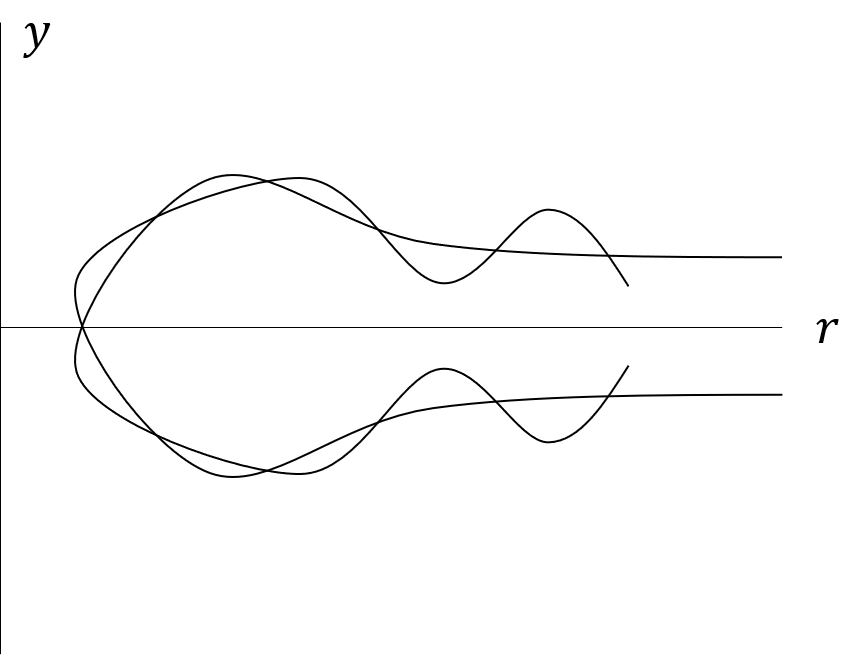}
\caption{Stable and unstable invariant manifolds of infinity for the Poincar{\'e}
map $\mathcal{P}_{\xi_{0}}$ in (\ref{eq:Poincare_map}). }
\label{figure-1} 
\end{center}
\end{figure}

Then, for $y>0$,  considering a parametrization of $\gamma^{s,u}$ of the form
\begin{equation}
\begin{array}{rl}
\tilde{r} & =\tilde{r}_{h}\left(v\right)\\
\tilde{y} & =Y_{\xi_{0}}^{s,u}\left(v\right)
\end{array}\label{eq:parametrization inv curves}
\end{equation}
 where $\tilde{r}_{h}\left(v\right)$ is the parametrization of the
unperturbed homoclinic \eqref{eq:unperturbed_homoclinic_first},
we observe that to measure the distance between the invariant manifolds
along a suitable section $v=v^*$ it suffices to measure
the difference between the functions $Y_{\xi_{0}}^{s,u}$. The following
theorem is one of the two main ingredients needed for the proof of
Theorem \ref{thm:Main_theorem}.
\begin{thm}
\label{thm:Main Theorem 2-1}Let $\mathcal{W}_{\infty}^{s}$ and $\mathcal{W}_{\infty}^{u}$
be the infinity manifolds associated to the  periodic orbit 
$\varLambda$
and $\gamma^{s,u}$ the corresponding curves of the map $\mathcal{P}_{\xi_{0}}.$
Then, for $G$ large enough,
\begin{itemize}
\item[(i)] 
The curves $\gamma^{s,u}$ exist and have a parametrization
of the form \eqref{eq:parametrization inv curves},

\item[(ii)] 
If we fix a section $\tilde{r}=\tilde{r}\left(v^{*}\right)$
the distance $d$ between these curves along this section is given
by 
\begin{equation}
d=\frac{J_{1}\left(1\right)\sqrt{2\pi}}{\tilde{y}_{h}\left(v^{*}\right)}G^{
\nicefrac{1}{2}}e^{-\frac{G^{3}}{3}}\sin\left(\xi_{0}-G^{3}v^{*}\right)+E,
\qquad\left|E\right|\leq CG^{-1/2}e^{-\frac{G^{3}}{3}},\label{eq:distance 
formula}
\end{equation}
where $J_{1}$ is the first Bessel function of first kind and $\tilde{y}_{h}$
correspond to the $\tilde{y}$ component of the unperturbed homoclinic and $C>0$ is a constant independent of $G$. 

\item[(iii)] There exist (at least) two transverse homoclinic connections
to the  periodic orbit  $\varLambda.$
\end{itemize}
\end{thm}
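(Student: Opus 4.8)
The plan is to prove (i), (ii) and (iii) together, the analytic continuation of the parametrizations of $\mathcal{W}^{u,s}_\infty$ to a complex domain being the core of the argument. Since $\varLambda$ is a parabolic periodic orbit of \eqref{eq:eqs of motion}, the manifolds $\mathcal{W}^{u,s}_\infty$ are exact Lagrangian; hence, on $\tilde{y}>0$ and for $G$ large, they are graphs over $\tilde{r}$, and using the time $v$ of the unperturbed parabola \eqref{eq:unperturbed_homoclinic_first} they take the form \eqref{eq:parametrization inv curves} for certain functions $Y^{s,u}_{\xi_0}$. Writing $Y^{s,u}_{\xi_0}(v)=\tilde{y}_h(v)+\eta^{s,u}(v)$ and imposing invariance under \eqref{eq:eqs of motion} --- equivalently, that the generating function of the graph solves the Hamilton--Jacobi equation associated to $\tilde{H}$ in \eqref{eq:rescaled_hamiltonian} --- the correction $\eta^{s,u}$ satisfies a linear transport equation along the unperturbed homoclinic, with source $\partial_{\tilde{r}}U=\mathcal{O}(G^{-4})$ plus terms quadratic in $\eta^{s,u}$. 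Integrating from $v\to+\infty$ (stable branch) and $v\to-\infty$ (unstable branch), where $\eta^{s,u}\to0$ because both manifolds are asymptotic to $\varLambda$, recasts the construction as a fixed point equation $\eta^{s,u}=\mathcal{F}^{s,u}[\eta^{s,u}]$ for an integral operator.

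For (i), one first checks that the unperturbed homoclinic $\tilde{r}_h(v)$ extends analytically to the strip $|\mathrm{Im}\, v|<1/3$, its only singularities being the square-root branch points $\tilde{r}_h(v)\sim c_\pm(v\mp i/3)^{1/2}$ at $v=\pm i/3$, the images of the Keplerian collision $\tilde{r}=0$. I would solve $\eta^{s,u}=\mathcal{F}^{s,u}[\eta^{s,u}]$ by a contraction argument on a Banach space of functions analytic on a domain $D^{s,u}$ obtained from $|\mathrm{Im}\, v|<1/3$ by deleting $\mathcal{O}(G^{-3})$-neighbourhoods of $\pm i/3$ (and extending to $\mathrm{Re}\, v\to\pm\infty$ according to the branch), endowed with weighted supremum norms that quantify the admissible growth of $\eta^{s,u}$ near $\pm i/3$. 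The delicate point --- and the step I expect to be the main obstacle --- is that, once continued into $D^{s,u}$, the term $U(\tilde{r},\xi)=\tilde{r}^{-1}-(\tilde{r}^2+\rho^2(\xi)/(4G^4))^{-1/2}$ is \emph{not} a small perturbation near $\pm i/3$: it carries its own singularities, at $\tilde{r}^2=-\rho^2(\xi)/(4G^4)$, which accumulate at $\pm i/3$ as $G\to\infty$, so the full expansion \eqref{eq:potential_main_order_term} matters there and a separate analysis of an inner equation near the singularities is needed to pin down the norms --- this is precisely why Melnikov theory cannot be applied off the shelf. Arranging the weights so that the $\mathcal{O}(G^{-4})$ smallness of the source dominates the growth near $\pm i/3$, $\mathcal{F}^{s,u}$ becomes a contraction for $G$ large and yields a unique small solution $\eta^{s,u}$, analytic on $D^{s,u}$; restricting to real $v$ gives (i).

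For (ii), on $D^s\cap D^u$, which contains a complex neighbourhood of the real axis, the difference $\Delta(v,\xi_0)=Y^s_{\xi_0}(v)-Y^u_{\xi_0}(v)=\eta^s-\eta^u$ is defined and, subtracting the two invariance equations, solves a homogeneous linear transport equation; this reduces its computation to a single section $v=v^*$ (away from the perihelion $v^*=0$): one obtains $\Delta(v^*,\xi_0)=\tilde{y}_h(v^*)^{-1}\bigl(M(\sigma)+E\bigr)$ with $\sigma=\xi_0-G^3v^*$, $|E|\le CG^{-1/2}e^{-G^3/3}$ (controlled by the estimates of the previous step), where $M(\sigma)=\int_{-\infty}^{\infty}\tilde{y}_h(v)\,\partial_{\tilde{r}}U(\tilde{r}_h(v),\sigma+G^3v)\,dv$ is the Melnikov integral; integrating by parts and retaining the leading term of \eqref{eq:potential_main_order_term}, it equals, up to an explicit constant,
\begin{equation*}
M(\sigma)=\frac{1}{8G}\int_{-\infty}^{\infty}\frac{(\rho^{2})'(\sigma+G^{3}v)}{\tilde{r}_h(v)^{3}}\, dv .
\end{equation*}
To compute it I would Fourier-expand the $2\pi$-periodic, even function $\rho^2$; using Kepler's equation $t=E-\sin E$, $\rho=1-\cos E$ and the Jacobi--Anger expansion of $e^{i\sin E}$, its $n$-th Fourier coefficient becomes a combination of Bessel functions $J_k(1)$, which the recurrence $J_{k-1}(1)+J_{k+1}(1)=2kJ_k(1)$ collapses, for $n=1$, to a multiple of $J_1(1)$. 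Interchanging sum and integral (legitimate by dominated convergence: the coefficients decay polynomially while the exponential factor below is summable) one is left, in each harmonic, with $\int_{-\infty}^{\infty}\tilde{r}_h(v)^{-3}e^{inG^{3}v}\,dv$; pushing the contour to the branch point $v=i/3$ for $n>0$ (resp.\ $v=-i/3$ for $n<0$), where $\tilde{r}_h(v)^{-3}\sim c_+^{-3}(v-i/3)^{-3/2}$, yields a factor $(nG^{3})^{1/2}e^{-nG^{3}/3}$ times the explicit constant $\int_\Gamma w^{-3/2}e^{iw}\,dw$, the harmonics $|n|\ge 2$ being super-exponentially smaller. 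Collecting the constants (the $\sqrt{2\pi}$ comes from this last integral together with $c_\pm$) and using reversibility, together with the symmetry $v\mapsto-v$ of the homoclinic, to fix the phase produces \eqref{eq:distance formula}.

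Finally, (iii) follows from \eqref{eq:distance formula}: for $G$ large the error is $\mathcal{O}(G^{-1})$ times the leading term, so on a fundamental domain of $\mathcal{P}_{\xi_0}$ acting on $\gamma^s$ --- a $v$-interval of length $\approx 2\pi/G^3$ --- the equation $\Delta(v,\xi_0)=0$ defining $\gamma^s\cap\gamma^u$ has at least two solutions in $v$, since there $\sin(\xi_0-G^3v)$ changes sign twice and $|E|$ is smaller than the amplitude of the main term; at each solution $\partial_v\Delta\sim G^3\,\tilde{y}_h(v)^{-1}\neq0$, so the crossing is transverse. This yields (at least) two transverse homoclinic connections to $\varLambda$, completing the proof of Theorem \ref{thm:Main Theorem 2-1}.
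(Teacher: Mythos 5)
Your strategy coincides with the paper's: graph/Hamilton--Jacobi parametrizations of $\mathcal{W}_{\infty}^{u,s}$, a contraction argument in Banach spaces of analytic functions on complex domains reaching $\mathcal{O}(G^{-3})$-neighbourhoods of the branch points $v=\pm i/3$, the observation that the difference lies in the kernel of a transport operator (hence has exponentially small Fourier coefficients), and the Melnikov computation via the Fourier--Bessel expansion of $\rho^{2}$ and contour deformation to $v=\pm i/3$. However, there is one genuine gap. The unstable manifold is determined by its asymptotics as $v\to-\infty$, where $\tilde{y}_{h}<0$, while the comparison with the stable manifold must be made on a section $v=v^{*}>0$, where $\tilde{y}_{h}>0$; the integration path of your fixed-point operator therefore has to cross the perihelion $v=0$, where $\tilde{y}_{h}(0)=0$, the curve folds as a graph over $\tilde{r}$, and the transport equation for $\eta^{u}$ (equivalently, equation \eqref{eq:eq T1} divided by $\tilde{y}_h^2$) is singular. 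You never address this. The paper devotes an entire block of the argument to it: it first switches from the Hamilton--Jacobi parametrization to a parametrization invariant by the flow (Theorem \ref{thm:Existence change of variables}), extends that one across $v=0$ by solving a variational fixed-point equation (Proposition \ref{prop:Analytic continuation existence}), and then inverts the change of variables to return to a graph over $\tilde{r}_{h}(v)$ on the boomerang domain (Proposition \ref{prop:Existence inverse change}). Without some such device your contraction scheme for the unstable branch simply does not produce a function on a domain containing $v^{*}>0$.

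A secondary point: your claim that ``a separate analysis of an inner equation near the singularities is needed'' is a misdiagnosis for this problem, and it sits in tension with your own conclusion. At distance $\kappa G^{-3}$ from $\pm i/3$ one has $\tilde{r}_{h}^{2}\sim G^{-3}$ while $\rho^{2}/(4G^{4})\sim G^{-4}$, so $\rho^2/(4G^4\tilde r_h^2)=\mathcal{O}(G^{-1})$ and the potential $U$ remains a genuine perturbation whose first Taylor term dominates throughout the domain (this is exactly Lemma \ref{lem: Potential estimates}). This is precisely why the Melnikov potential gives the true leading order with the explicit constant $J_{1}(1)\sqrt{2\pi}$; if an inner (matching) analysis were really required, that constant could not be read off from the Melnikov integral as you do in your computation of $M(\sigma)$.
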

Item $(iii)$ is a direct consequence of Item $(ii)$. Indeed,
since 
\[
J_{1}\left(1\right)\sim0.44051\neq0
\]
we observe that formula \eqref{eq:distance formula} in Theorem \ref{thm:Main 
Theorem 2-1},
implies that the zeros of the distance are given, up to first order, by the
zeros of the function $\sin\left(\xi_{0}-G^{3}v^{*}\right)$. Therefore, transversal
intersections of the invariant curves $\gamma^{s,u}$ will occur for
values of $\xi_{0}-G^{3}v^{*}$ located in a neighbourhood $\mathcal{O}\left(G^{-1}\right)$
of the points $\xi_{0}-G^{3}v^{*}=0,\pi.$ These transversal intersections
give rise to two homoclinic connections to the invariant manifold
$\varLambda$ as stated in the third item of Theorem \ref{thm:Main Theorem 2-1}.

Observe that the distance between the invariant manifolds is exponentially small with respect to $G.$
As usually happens in exponentially small splitting of separatrices
phenomena, the smaller the period of the fast perturbation (in our case $2\pi/G^3$), the smaller
the distance between the manifolds (see \cite{Neishtadt1984}).

\subsection{Symbolic dynamics and oscillatory orbits}

Once Theorem \ref{thm:Main Theorem 2-1} is proven, the existence of chaotic dynamics
is obtained following the techniques introduced
in \cite{moser1973stable}. For that we define the section  
\begin{equation}
\varSigma_{+}=\left\{\left(\tilde{r},\tilde{y},\xi\right)\in \mathbb{R}_+\times\mathbb{R}\times\mathbb{T}:\tilde{y}=0, \dot{\tilde{y}}>0\right\}
\end{equation}
 and use coordinates $\left(\tilde{r}_0,\xi_0\right)$ for this section. Then, we define the Poincar{\'{e}} map
\begin{equation}
\begin{split}\psi:\varSigma_{+} & \rightarrow\varSigma_{+}\\
\left(\tilde{r}_{0},\xi_{0}\right) & \mapsto\left(\tilde{r}_{1},\xi_{1}\right)
\end{split}
\label{eq:map for the horseshoe}
\end{equation}
where $\xi_{1}=\xi_0+G^3 s$, and $s>0$ is the first time in 
which $\phi_{s}\left(\tilde{r}_{0},0,\xi_{0}\right)$ intersects $\varSigma_{+}$  
again and $\tilde{r}_{1}$ is such that 
$\phi_{s}\left(\tilde{r}_{0},0,\xi_{0}\right)=\left(\tilde{r}_{1},0,\xi_{1}\right)$.
We set $\xi_1=\infty$ for points $\left(\tilde{r}_0,\xi_0\right)$ which do not 
intersect $\varSigma_{+}$ anymore in the future and define 
$D_0\subset\varSigma_{+}$  as the set of points for which $\xi_1<\infty$. In the 
unperturbed problem ($G\to\infty$) one easily deduces, using the conservation of 
energy, that $\varSigma_+$ is divided in two open sets, corresponding to 
initial conditions leading to hyperbolic and elliptic motions, 
whose common boundary is  the curve in which the 
homoclinic manifold \eqref{eq:unperturbed_homoclinic_first} intersects 
$\varSigma_{+}$.  In this case, $D_0$ corresponds to the set of initial 
conditions leading to elliptic motions.

In order to characterize the set $D_0$ in the full problem \eqref{eq:rescaled_hamiltonian} we make use of the following proposition, already proven in \cite{Brandao2017}, which describes the intersection  $\mathcal{W}^{s,u}\cap \varSigma_{+}$. 
\begin{prop}\label{prop:proposiciónVidalintersecc}
The stable  manifold $\mathcal{W}^s$ intersects $\varSigma_{+}$ backwards for the first time in a simple curve 
\begin{equation}
\tilde{\gamma}^s=\left\{\left(\tilde{r}_{0}^s\left(\xi_0\right),\xi_0\right)\in\varSigma_{+}\colon
 \tilde{r}_0^s\left(\xi_0+2\pi\right)=\tilde{r}_0^s\left(\xi_0\right)\right\}.
\end{equation}
 Analogously, the unstable manifold  $\mathcal{W}^u$ intersects $\varSigma_{+}$ forward for the first time in a simple curve   \begin{equation}
 \tilde{\gamma}^u=\left\{\left(\tilde{r}_{0}^u\left(\xi_0\right),\xi_0\right)\in\varSigma_{+}\colon \tilde{r}_0^{u}\left(\xi_0+2\pi\right)=\tilde{r}_0^{u}\left(\xi_0\right)\right\}.\end{equation}
\end{prop}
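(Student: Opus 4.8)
The plan is to prove Proposition~\ref{prop:proposiciónVidalintersecc} by transporting the invariant curves $\gamma^{s,u}$ of the stroboscopic map $\mathcal{P}_{\xi_0}$, whose existence and graph structure over $\tilde r=\tilde r_h(v)$ is already guaranteed by item (i) of Theorem~\ref{thm:Main Theorem 2-1}, onto the section $\varSigma_+=\{\tilde y=0,\ \dot{\tilde y}>0\}$ by following the flow $\phi_s$. First I would fix the parametrizations $\tilde r=\tilde r_h(v)$, $\tilde y=Y^{s,u}_{\xi_0}(v)$ of $\gamma^{s,u}$ from \eqref{eq:parametrization inv curves} and argue that for $G$ large these curves are $C^1$-close to the unperturbed homoclinic curve $\tilde z_h(v,\xi_0)$, on which $\tilde y_h$ vanishes transversally exactly at $v=0$ with $\dot{\tilde y}_h(0)>0$ by the normalization fixed after \eqref{eq:unperturbed_homoclinic_first}. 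By the implicit function theorem applied to the equation $Y^{s}_{\xi_0}(v)=0$ (and its $u$ counterpart), exploiting that $\partial_v Y^s_{\xi_0}(v)$ is a small perturbation of $\dot{\tilde y}_h(v)/\dot v$ near $v=0$, there is a unique zero $v=v^s_*(\xi_0)$ close to $0$, depending analytically (hence $2\pi$-periodically, since $\mathcal{P}_{\xi_0}$ depends $2\pi$-periodically on $\xi_0$) on $\xi_0$; the same for $v^u_*(\xi_0)$.

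Next I would translate this into a statement about the flow rather than the stroboscopic map. A point of $\mathcal{W}^s_\infty$ lying on the curve $\gamma^s\subset\{\xi=\xi_0\}$ with $\tilde y=0$ and $\dot{\tilde y}>0$ is by definition a point of $\mathcal{W}^s\cap\varSigma_+$; conversely, running the flow forward from any point of $\mathcal{W}^s\cap\varSigma_+$ stays on $\mathcal{W}^s_\infty$ and asymptotes to $\varLambda$, so the first intersection of $\mathcal{W}^s$ with $\varSigma_+$ (going backwards, as stated) is precisely the set of $(\tilde r,\xi_0)$ obtained by letting $\xi_0$ vary and reading off $\tilde r^s_0(\xi_0)=\tilde r_h(v^s_*(\xi_0))$. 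I would check that this is genuinely the \emph{first} backward intersection: since on the unperturbed homoclinic $\tilde y_h$ has a single sign change (the parabolic orbit goes out to infinity monotonically after perihelion), and the perturbed invariant manifold is $C^1$-close to it on the relevant compact piece of $v$, the perturbed orbit also crosses $\{\tilde y=0\}$ with $\dot{\tilde y}>0$ exactly once before escaping to $\varLambda$. Periodicity $\tilde r^s_0(\xi_0+2\pi)=\tilde r^s_0(\xi_0)$ is inherited directly from the $2\pi$-periodicity in $\xi$ of the Hamiltonian \eqref{eq:rescaled_hamiltonian} and hence of $\mathcal{W}^s_\infty$. Simplicity (no self-intersections) of $\tilde\gamma^s$ follows because $\xi_0\mapsto(\tilde r^s_0(\xi_0),\xi_0)$ is a graph over the $\xi_0$-circle.

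The analogous argument for $\mathcal{W}^u$, now propagating forward and taking the first forward crossing of $\varSigma_+$, gives $\tilde\gamma^u$ with $\tilde r^u_0(\xi_0)=\tilde r_h(v^u_*(\xi_0))$ and the same periodicity and simplicity. The main obstacle I anticipate is the control of \emph{first} crossing times uniformly in $\xi_0$ and in $G$: one must rule out that the exponentially small splitting introduces spurious extra crossings of $\{\tilde y=0\}$ between the section and infinity. This is handled by a compactness-plus-$C^1$-closeness argument on a finite $v$-interval $[-v_0,v_0]$ (where the unperturbed $\tilde y_h$ is bounded away from $0$ except at $v=0$), together with a separate estimate for $|v|\geq v_0$ where the orbit is already in a neighbourhood of $\varLambda$ in McGehee-type coordinates and $\tilde y$ stays of one sign while monotonically decreasing to $0$; the smallness of the perturbation $U=\mathcal{O}(G^{-4})$ from \eqref{eq:potential_main_order_term} makes both estimates robust for $G>G^*$. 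Everything else is a routine application of the implicit function theorem and the definition of the invariant manifolds. Since this proposition is, as noted in the text, already established in \cite{Brandao2017}, I would keep the exposition brief and refer there for the details that coincide verbatim.
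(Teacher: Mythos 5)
First, a point of comparison: the paper does not actually prove this proposition --- it is imported verbatim from \cite{Brandao2017} (``already proven in \cite{Brandao2017}''), and it is logically independent of Theorem \ref{thm:Main Theorem 2-1}, which is invoked only afterwards to obtain transversality of the intersection $\tilde{\gamma}^{s}\cap\tilde{\gamma}^{u}$. Your plan of deriving the proposition from item (i) of Theorem \ref{thm:Main Theorem 2-1} is therefore a different (and not circular) route, but as written it has a concrete gap.

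The gap is in the central step, the implicit function theorem applied to $Y^{s}_{\xi_0}(v)=0$ near $v=0$. The graph parametrization \eqref{eq:parametrization inv curves} that the paper actually produces is the Hamilton--Jacobi parametrization \eqref{eq:H-J parametrization}, $\tilde y=\tilde y_h(v)+\tilde y_h(v)^{-1}\partial_v T_1^{s}$, which is singular at $v=0$ precisely because $\tilde y_h(0)=0$; moreover every real domain on which $T_1^{u,s}$ or its flow extension is constructed --- $D^{\infty,s}_{\kappa,\delta}$, $D^{\mathrm{flow}}_{\kappa,\delta}$ and the boomerang domain $D_{\kappa,\delta}$ --- keeps $\mathrm{Re}(v)$ bounded away from $0$ by a constant of order $(1/6-\delta)/\tan\beta_2$, independent of $G$. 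Hence no statement in the paper gives $Y^{s}_{\xi_0}$, let alone its $C^1$-closeness to $\tilde y_h$, in a neighbourhood of $v=0$: the zero you want to locate by the IFT lies exactly where the parametrization is not defined (the whole of Section \ref{sec:Existence-of-the} exists because crossing $v=0$ forces one to abandon the graph form). The correct mechanism --- which you mention only in passing as ``transporting by the flow'' --- is to take a point of $\gamma^{s}$ at a fixed $v=v_->0$ inside the domain of validity, flow it backwards for a finite time, and apply the implicit function theorem to the crossing time of $\{\tilde y=0\}$, using that the unperturbed orbit crosses transversally with $\dot{\tilde y}>0$, together with smooth dependence on initial conditions and the $\mathcal{O}(G^{-4})$ smallness of $U$. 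Your arguments for uniqueness of the first crossing, periodicity in $\xi_0$ and simplicity of the resulting graph are fine once this is repaired; alternatively, simply citing \cite{Brandao2017}, as the paper does, is legitimate.
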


\begin{rem}
From Theorem \ref{thm:Main Theorem 2-1} we deduce that the curves $\tilde{\gamma}^{s,u}$ described in Proposition \ref{prop:proposiciónVidalintersecc} intersect transversally, a fact which is crucial for the proof of Theorem \ref{thm:Main Theorem 2-2}.
\end{rem}

The curve $\tilde{\gamma}^{s}$ divides $\varSigma_{+}$ in two connected components. One of these components correspond to $D_0$  and the other component  consists of initial conditions leading to orbits which do not intersect $\varSigma_{+}$ again and which escape to infinity with positive asymptotic radial velocity. We also define the set  $D_1\subset\varSigma_{+}$ of initial conditions $\left(\tilde{r}_0,\xi_0\right)$, in which the map $\psi^{-1}$ is well defined. A similar argument to the one above using $\tilde{\gamma}^{u}$ instead of $\tilde{\gamma}^{s}$ can be used to identify this set.

Once we have identified $D_0$ and $D_1$, given a point $\left(\tilde{r}_{0},\xi_{0}\right)\in D_{0}\cap D_1$
we consider the sequence of consecutive times $\xi_{n}$ given by 
$\psi^{n}\left(\tilde{r}_{0},\xi_{0}\right)=(\tilde{r}_{n},\xi_{n})$
for $n\in\mathbb{Z}$ (whenever they exist) to define the sequence of 
integers 
\[
a_{n}=\left[\frac{\xi_{n}-\xi_{n-1}}{2\pi}\right],
\]
where $\left[\cdot\right]$ defines the integer part. Thus, $a_{n}\in\mathbb{N}$
measures the number of binary collisions of the primaries between
consecutive approaches of the third body. We introduce some technical
concepts needed for stating the theorem that establishes the existence
of symbolic dynamics on a subset of the closure $D_{0}\cap D_{1}$
by conjugating $\psi$ with the shift acting on a space of doubly
infinite sequences.

Let $A$ denote the set of all doubly infinite sequences 
\[
a=\left(\dots a_{-2},a_{-1},a_{0};a_{1},a_{2}\dots\right)
\]
 of elements $a_{n}\in\mathbb{N}$. Equipping $A$ with the product
topology, the shift $\sigma:A\rightarrow A$ given by 
\begin{equation}
\sigma\left(\left\{ a_{n}\right\} _{n\in\mathbb{Z}}\right)=\left\{ a_{n-1}\right\} _{n\in\mathbb{Z}}\label{eq:full shift}
\end{equation}
is a homeomorphism.

We can define the compactification $\bar{A}$ of $A$ admitting elements
of the following type: For $\alpha,\beta$ integers satysfying $\alpha\leq0,$
$\beta\geq1,$ let 
\[
a=\left(\infty,a_{\alpha+1},\dots,a_{\beta-1},\infty\right)\quad a_{n}\in\mathbb{N}.
\]
 We also admit half infinite sequences with $\alpha=-\infty,$ $\beta<\infty$
or $\alpha>-\infty,$ $\beta=\infty.$ It is possible to extend the
topology defined on $A$ to $\bar{A}$ in a way such that the shift
\eqref{eq:full shift} is a homeomorphism when restricted to 
\[
\bar{A}_{0}=\left\{ a\in\bar{A}\colon a_{0}\neq\infty\right\} 
\]
 (see \cite{moser1973stable} for details). 

The proof of the following theorem, from which Theorems \ref{thm:Main_theorem}
and \ref{thm:Main_Theorem_b} are deduced, follows from direct adaptation
of the ideas presented in \cite{moser1973stable} for the Sitnikov
problem. The main ingredients are the transversal intersection of
the curves $\gamma^{s,u}$ and a $C^{1}$ Lambda-Lemma for
the parabolic invariant manifold $\varLambda$. This Lambda-Lemma
follows from a careful analysis of the dynamics near $\varLambda$
using McGehee coordinates which map neighbourhoods of infinity into
bounded neighborhoods of the origin.
\begin{thm}
\label{thm:Main Theorem 2-2}There exists a set $S\subset\left(D_{0}\cap D_{1}\right)$
which is invariant under the Poincar{\'e} map $\psi$ defined in \eqref{eq:map for the horseshoe}
and such that its restriction to $S$, is conjugated to the shift
$\sigma$ defined in \eqref{eq:full shift}. That is, there exists
an homeomorphism $\chi:A\to S$ such that 
\[
\psi\chi=\chi\sigma.
\]

Moreover, $\chi$ can be extended to $\bar{\chi}:\bar{A}\to\bar{S}$
such that 
\[
\psi\bar{\chi}=\bar{\chi}\sigma
\]
 if both sides are restricted to $\bar{A}_{0}.$
\end{thm}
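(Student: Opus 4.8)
The plan is to follow Moser's strategy \cite{moser1973stable} as adapted to parabolic infinity, using the two ingredients already highlighted in the text: the transversal intersection of the invariant curves $\gamma^{s,u}$ (equivalently $\tilde\gamma^{s,u}$) furnished by Theorem \ref{thm:Main Theorem 2-1}, and a $C^1$ Lambda-Lemma for the parabolic periodic orbit $\varLambda$. First I would work in McGehee coordinates $\tilde r=2x^{-2}$, in which $\varLambda$ becomes the origin-based periodic orbit and $\mathcal W_\infty^{s,u}$ are the (one-sided analytic) strong stable/unstable manifolds; the key preliminary is to describe the dynamics in a neighbourhood of $\varLambda$ well enough to obtain a $C^1$ inclination lemma despite the fact that the linearisation at a parabolic point is degenerate. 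This is the standard subtlety: because the contraction/expansion along $\mathcal W_\infty^{s,u}$ is only polynomial (not exponential) in the McGehee variable, one must use the asymptotics of orbits approaching infinity along parabolic Kepler arcs to show that a disk transverse to $\mathcal W_\infty^s$ is, under forward iteration of $\psi$, stretched and $C^1$-converges to $\mathcal W_\infty^u$ on compact pieces. I would cite \cite{Baldoma2020a,Baldoma2020b} for the regularity of the manifolds and carry out the Lambda-Lemma essentially as in \cite{moser1973stable,Guardia2016}.

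Next I would set up the symbolic coding geometrically. Fix a transverse homoclinic point $p\in\tilde\gamma^s\cap\tilde\gamma^u\subset\varSigma_+$ coming from Theorem \ref{thm:Main Theorem 2-1}(iii), and around $p$ choose a small ``window'' $V\subset\varSigma_+$. For each large integer $N$ let $V_N\subset D_0\cap D_1$ be the set of initial conditions in $V$ whose first return to $\varSigma_+$ under $\psi$ performs exactly $N$ binary collisions of the primaries, i.e. whose return time $\xi_1-\xi_0$ lies in $[2\pi N,2\pi(N+1))$; equivalently $a_1=N$. Using the Lambda-Lemma together with transversality of $\tilde\gamma^s$ and $\tilde\gamma^u$, one shows that $\psi(V_N)$ is a ``horizontal-like'' strip crossing $V$ fully in the unstable direction while $V_N$ is a ``vertical-like'' strip crossing $V$ fully in the stable direction, and that the strips $\{V_N\}_{N\ge N_0}$ are pairwise disjoint and accumulate on the local stable manifold $\tilde\gamma^s\cap V$ as $N\to\infty$. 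This is the usual Conley–Moser / Smale-horseshoe picture, but with infinitely many strips indexed by $N\in\mathbb N$ rather than finitely many, which is exactly why the resulting symbol space is $\mathbb N^{\mathbb Z}$ rather than a finite alphabet; the extra symbol $\infty$ and the compactification $\bar A$ account for orbits that fail to return (escape to infinity), which is where the various final motions enter.

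With the strips in hand, I would invoke the standard intersection lemma for families of horizontal and vertical strips satisfying a hyperbolicity/cone condition (again as in \cite{moser1973stable}, Sections on the Sitnikov map): for any bi-infinite sequence $a=\{a_n\}_{n\in\mathbb Z}\in A$ with all $a_n\ge N_0$, the set
\[
\bigcap_{n\in\mathbb Z}\psi^{-n}\bigl(V_{a_n}\bigr)
\]
is a single point $\chi(a)$, the map $\chi:A\to\varSigma_+$ is continuous and injective with continuous inverse onto its image $S:=\chi(A)$, and by construction $\psi\circ\chi=\chi\circ\sigma$. Restricting Theorem \ref{thm:Main_theorem}'s alphabet to the symbols $\ge N_0$ only relabels $\mathbb N$, so the conjugacy with the full shift on $\mathbb N^{\mathbb Z}$ follows. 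For the compactified statement one repeats the argument allowing $a_n=\infty$ (terminating or half-infinite sequences): a symbol $\infty$ at position $n$ corresponds to an orbit segment that leaves every bounded set, and the $C^1$ Lambda-Lemma guarantees these limiting pieces sit correctly on $\mathcal W_\infty^{s,u}$, giving the extension $\bar\chi:\bar A\to\bar S$ with $\psi\bar\chi=\bar\chi\sigma$ on $\bar A_0$.

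The main obstacle is unquestionably the $C^1$ Lambda-Lemma at the parabolic orbit $\varLambda$: the loss of exponential hyperbolicity means the naive graph-transform argument does not close, and one has to exploit the precise $s\to\pm\infty$ asymptotics of the (rescaled) parabolic Kepler orbits — $\tilde r_h(v)\sim c v^{2/3}$ and the corresponding decay of $\tilde y_h$ — together with the $C^\infty$ (but not analytic) matching at infinity from \cite{McGehee1973,Baldoma2020a}, to verify the requisite cone conditions and $C^1$ convergence of iterated disks. Everything else — the geometric construction of the strips from transversality, the disjointness and accumulation of the $V_N$, and the purely symbolic bookkeeping leading to $\chi$ and $\bar\chi$ — is a direct transcription of Moser's treatment of the Sitnikov problem and of its reuse in \cite{Guardia2016}, and I would present it as such, referring the reader there for the routine details.
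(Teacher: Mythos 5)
Your proposal follows essentially the same route the paper takes: the paper does not give a detailed proof of this theorem either, but explicitly states that it follows by direct adaptation of Moser's treatment of the Sitnikov problem, with the same two ingredients you identify — the transversal intersection of $\gamma^{s,u}$ from Theorem \ref{thm:Main Theorem 2-1} and a $C^{1}$ Lambda-Lemma at the parabolic orbit $\varLambda$ obtained via McGehee coordinates. Your construction of the strips $V_{N}$ indexed by the number of binary collisions, the Conley--Moser intersection argument, and the compactification handling the symbol $\infty$ all match the paper's intended argument.
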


In other words, to each point 
$p=\left(r_{0},\xi_{0}\right)\in S$ we
associate a sequence $a(p)\in A$ which codifies the time between
successive intersections of the flow $\phi_{s}\left(r_{0},0, \xi_{0}\right)$
with $\varSigma_+.$ In this setting, the connection between Theorem
\ref{thm:Main_theorem} and Theorem \ref{thm:Main_Theorem_b} becomes
clear. The first part of Theorem \ref{thm:Main Theorem 2-2} corresponds
to sequences
\begin{itemize}
\item $a\left(p\right)=\left(\dots a_{-2},a_{-1},a_{0},a_{1},\dots\right)$
with $a_{n}\in\mathbb{\mathbb{N}}$ for all $n\in\mathbb{Z}.$ These
represent orbits which perform an infinite number of ``close'' approaches
to the line where the primaries move both in the past and in the future.
From this result we deduce the existence of any past-future combination
of bounded ($\sup_{n\in{\mathbb{Z}}}a_n<\infty$) and oscillatory ($\limsup_{n\to\pm\infty}a_n=\infty$) motions.
\end{itemize}
The second part of the theorem, concerns sequences of the following
type 
\begin{itemize}
\item $a\left(p\right)=\left(\infty,a_{-k},a_{-k+1},\dots\right)$ with
$a_{n}\in\mathbb{\mathbb{N}}$ for all $n>-k$, which represent capture
orbits, i.e., orbits where the third body comes from infinity at $t\rightarrow-\infty$
and remains revolving around the line of primaries for all future
times. In particular, we obtain orbits which are hyperbolic or parabolic
in the past and bounded or oscillatory in the future.
\item $a\left(p\right)=\left(\dots a_{l-1},a_{l},\infty\right)$ with $a_{n}\in\mathbb{\mathbb{N}}$
for all $n<l$. In this case the third body performed an infinite
number of oscillations in the past but escapes to infinity as $t\rightarrow\infty$.
These sequences correspond to orbtis which are bounded or oscillatory
in the past and parabolic or hyperbolic in the future.
\item $a\left(p\right)=\left(\infty,a_{k},\dots,a_{l},\infty\right)$ with
$a_{n}\in\mathbb{\mathbb{N}}$ for all $n\in\mathbb{Z}$ which corresponds
to orbits coming from infinity, revolving around the primaries a finite number of times
and escaping again to infinity as $t\rightarrow\infty$. They correspond
to past-future combinations of parabolic and hyperbolic motions.
\end{itemize}
Finally, we point out that the existence of infinitely many periodic orbits in the REI3BP is deduced from Theorem   \ref{thm:Main_theorem} since fixed points for the shift correspond to periodic orbits of the Hamiltonian \eqref{eq:rescaled_hamiltonian}.

\section{The invariant Manifolds as graphs}

\subsection{\label{sec:The-integrable-system}The unperturbed homoclinic solution}

For the unperturbed problem, $G\to\infty$ in \eqref{eq:rescaled_hamiltonian}, the equations
of motion reduce to
\begin{equation}\label{eq:unperturbed eqs}
\begin{split}
\frac{\text{d}\tilde{r}}{\text{d}v} & =\tilde{y}\\
\frac{\text{d}\tilde{y}}{\text{d}v} & =\frac{1}{\tilde{r}^{3}}-\frac{1}{\tilde{r}^{2}}.
\end{split}
\end{equation}
In this case the infinity manifolds $\mathcal{W}_{\infty}^{s,u}$
associated to $\varLambda$ coincide along the two dimensional homocinic manifold $\tilde{z}_{h}$  introduced in \eqref{eq:unperturbed_homoclinic_first}. The (complex) singularities of $\tilde{z}_h\left(v,\xi\right)$ will be crucial for studying the existence
of the invariant manifolds of the perturbed problem in certain complex
domains. Thus, we state the following results, which were already obtained in \cite{Martinez1994}.
 \begin{enumerate}
   \item \label{itm:Homoclinic Infinity}  The homoclinic solution \eqref{eq:unperturbed_homoclinic_first}
behaves as 
\[
\tilde{r}_{h}\left(v\right)\sim3v^{2/3},\qquad\tilde{y}_{h}\left(v\right)\sim2v^
{-1/3} \qquad\text{ as }\qquad\left|v\right|\rightarrow\infty. 
\]
\item The homoclinic solution \eqref{eq:unperturbed_homoclinic_first} is a real 
analytic function of $v$ with singularities at $v=\pm i/3$. 
   \item \label{itm:Homoclinic Singul} Close to its singularities, the homoclinic solution \eqref{eq:unperturbed_homoclinic_first} behaves as
\[
\tilde{r}_{h}\left(v\right)\sim 
C\left(v\mp\frac{i}{3}\right)^{1/2},\qquad\tilde{y}_{h}\left(v\right)\sim\frac{C
}{2}\left(v\mp\frac{i}{3}\right)^{1/2},\quad\text{ where }\quad C^{2}=\pm2i.
\]
\end{enumerate}

\subsection{\label{sec:Parametrizations-of-the} The perturbed invariant
manifolds and their difference }
In this section we look for parametrizations of the infinity manifolds
$\mathcal{W}_{\infty}^{u,s}$ in certain complex domains defined below.
More concretely we look for graph parametrizations of $\mathcal{W}_{\infty}^{u,s}$
as solutions to a PDE. To do this we observe that the canonical form
$\lambda=\tilde{r}\text{d}\tilde{y}-\tilde{H}\text{d}s$ is closed
on the infinity manifolds (since the infinity manifolds are invariant
by the flow it is enough to check that $\text{d}\lambda$ is null
on $\varLambda$). Then, one can see $\lambda$ as the differential
of a function $S\left(\tilde{r},\xi\right)$ such that 
\[
\partial_{\tilde{r}}S=\tilde{y}\qquad G^{3}\partial_{\xi}S=-\tilde{H}
\]
or, putting this together, as a solution of the Hamilton-Jacobi equation
\[
G^{3}\partial_{\xi}S+H\left(\tilde{r},\partial_{\tilde{r}}S,\xi\right)=0.
\]
We write $S=S_{0}+S_{1}$ where $S_{0}$ is the solution to the unperturbed
problem 
\[
G^{3}\partial_{\xi}S_{0}+\frac{\left(\partial_{\tilde{r}}S_{0}\right)^{2}}{2}+\frac{1}{2\tilde{r}^{2}}-\frac{1}{\tilde{r}}=0
\]
and perform the change of variables 
\begin{equation}
\left(\tilde{r},\xi\right)\mapsto\left(\tilde{r}_{h}\left(v\right),\xi\right).\label{eq:change v,xi}
\end{equation}
Then, the equation for $T_{1}\left(v,\xi\right)=S_{1}\left(\tilde{r}_{h}\left(v\right),\xi\right)$
becomes 
\begin{equation}
\partial_{v}T_{1}+\frac{\text{1}}{2\tilde{y}_{h}{}^{2}}\left(\partial_{v}T_{1}\right)^{2}+G^{3}\partial_{\xi}T_{1}+V\left(v,\xi\right)=0,\label{eq:eq T1}
\end{equation}
where 
\begin{equation}\label{eq:perturb potential}
V\left(v,\xi\right)=U\left(\tilde{r}_h(v),\xi\right).
\end{equation}
 Note that the change of variables \eqref{eq:change v,xi} implies
that we are looking for parametrizations of the stable and unstable
manifolds of the form 
\begin{equation}
\begin{split}\tilde{r} & =\tilde{r}_{h}\left(v\right)\\
\tilde{y} & =\tilde{y}_{h}\left(v\right)+\frac{1}{\tilde{y}_{h}\left(v\right)}\partial_{v}T_{1}^{u,s}
\end{split}
\label{eq:H-J parametrization}
\end{equation}
where $\tilde{r}_{h}\left(v\right),\tilde{y}_{h}\left(v\right)$ correspond
to the unperturbed homoclinic \eqref{eq:unperturbed_homoclinic_first}
and $T_{1}^{u,s}\left(v,\xi\right)$ are solutions of equation \eqref{eq:eq T1}
with asymptotic boundary condition for the unstable manifold 
\begin{equation}
\lim_{v\rightarrow-\infty}\frac{1}{\tilde{y}_{h}\left(v\right)}\partial_{v}T_{1}^{u}=0\label{eq:Boundary condition}
\end{equation}
and the analogous one for the stable manifold. Once we show the existence
of the unstable manifold, the existence of the stable one is guaranteed
by symmetry. Indeed, if $T_{1}\left(v,\xi\right)$ is a solution of
\eqref{eq:eq T1}, $-T_{1}\left(-v,-\xi\right)$ is also a solution
satisfying the opposite boundary condition. 

Before going into the analysis of the existence of the generating
functions $T_{1}^{u,s}$ we recall that our goal is to have a first
asymptotic approximation of the distance between the infinity manifolds
which now boils down to obtain an asymptotic formula for $\partial_{v}\left(T_{1}^{u}-T_{1}^{s}\right)$.
To this end, we introduce the Melnikov potential 
\begin{equation}
L\left(v,\xi;G\right)=\int_{-\infty}^{\infty}V\left(\tilde{r}_{h}\left(v+s\right),\xi+G^{3}s\right)\text{d}s,\label{eq:Melnikov potential}
\end{equation}
which, as we state in Theorem \ref{thm:Theorem Melnikov-generating functions} 
below approximates to first order the difference 
$\varDelta=T_{1}^{s}-T_{1}^{u}$. 

We point out that the parametrization \eqref{eq:H-J parametrization}
becomes undefined at $v=0$ since we have fixed $v$ such that $\tilde{y}_{h}\left(0\right)=0.$
Since in order to measure $\partial_{v}\left(T_{1}^{u}-T_{1}^{s}\right)$
we need both functions to be defined in a common domain we will introduce
a different parametrization to extend the unstable manifold across
$v=0.$ This is discussed in full detail in Section \ref{sec:Existence-of-the}.

The next proposition gives the first asymptotic term of the Melnikov
potential and will be proved in Section \ref{sec:Computation-of-the}.
\begin{prop}
\label{prop:Melnikov potential}The function function $L\left(v,\xi;G\right)$
defined in \eqref{eq:Melnikov potential} satisfies 
\[
L\left(v,\xi;G\right)=L^{[0]}\left(G\right)+2\sum_{l=1}^{\infty}L^{[l]}\left(G\right)\cos\left(l\left(\xi-G^{3}v\right)\right),
\]
where 
\begin{align*}
L^{[1]}\left(G\right) & =-J_{1}\left(1\right)\sqrt{2\pi}G^{\nicefrac{-5}{2}}e^{\frac{-G^{3}}{3}}\left(1+\mathcal{O}\left(G^{-\nicefrac{3}{2}}\right)\right)\\
\left|L^{[l]}\left(G\right)\right| & \leq KG^{\nicefrac{-5}{2}}e^{l-\nicefrac{1}{2}}e^{\frac{-\left|l\right|G^{3}}{3}},\quad\text{for }l>1,
\end{align*}
with $J_{1}$ the first Bessel function of the first kind and $K>0$
a constant independent of $G.$ 
\end{prop}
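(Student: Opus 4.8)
The plan is to compute the Melnikov potential $L(v,\xi;G)$ by first expanding the integrand in Fourier series in $\xi$, reducing the problem to the computation of the Fourier coefficients $L^{[l]}(G)$, and then evaluating each coefficient by residues, shifting the integration contour to the nearest complex singularity $v=\pm i/3$ of the homoclinic solution $\tilde r_h$. First I would recall from \eqref{eq:potential_main_order_term} and \eqref{eq:perturb potential} that $V(v,\xi)$ depends on $\xi$ only through $\rho^2(G^3 s)/(8G^4\tilde r_h^3(v))$ to leading order, and that $\rho(t)=1-\cos E(t)$ with $t=E-\sin E$ is a $2\pi$-periodic real-analytic function of $t$; hence $\rho^2$ has a Fourier expansion $\rho^2(\tau)=\sum_{l\in\mathbb Z}c_l e^{il\tau}$ whose coefficients are classical and can be written in terms of Bessel functions (this is exactly where $J_1$ enters, via the Kepler equation: $e^{iE}$ expands in Bessel functions of the eccentric/mean anomaly). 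Substituting into \eqref{eq:Melnikov potential} and using $\int_{-\infty}^\infty g(v+s)e^{ilG^3 s}\,ds = e^{-ilG^3 v}\int_{-\infty}^\infty g(w)e^{ilG^3 w}\,dw$ produces the stated Fourier-in-$(\xi-G^3 v)$ structure, with
\[
L^{[l]}(G) = \frac{c_l}{8G^4}\int_{-\infty}^\infty \frac{e^{ilG^3 w}}{\tilde r_h^3(w)}\,dw\bigl(1+\mathcal O(G^{-4})\bigr),
\]
plus the tail from the $\mathcal O(\tilde r^{-2}G^{-4})$ correction in \eqref{eq:potential_main_order_term}, which I would bound separately and absorb into the error.

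The core step is the asymptotics of the oscillatory integral $I_l(G)=\int_{-\infty}^\infty \tilde r_h^{-3}(w)\,e^{ilG^3 w}\,dw$ for large $G$. Since $l>0$ and $e^{ilG^3 w}$ decays in the upper half-plane, I would push the contour up towards $w=i/3$. By property \eqref{itm:Homoclinic Singul} of Section \ref{sec:The-integrable-system}, near $w=i/3$ one has $\tilde r_h(w)\sim C(w-i/3)^{1/2}$ with $C^2=2i$, so $\tilde r_h^{-3}(w)\sim C^{-3}(w-i/3)^{-3/2}$, a branch-point singularity rather than a pole. The standard contour-deformation/steepest-descent argument for such branch points (collapsing the contour onto a Hankel-type loop around $w=i/3$) gives the leading asymptotics: with $w=i/3+\zeta$,
\[
I_l(G)\sim e^{-lG^3/3}\,C^{-3}\int_{\mathcal H}\zeta^{-3/2}e^{ilG^3\zeta}\,d\zeta,
\]
and the loop integral evaluates (via the Hankel representation of $1/\Gamma$) to a constant times $(lG^3)^{1/2}$. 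Tracking the constants $C^{-3}=(2i)^{-3/2}$, the Gamma-function factor $1/\Gamma(3/2)=2/\sqrt\pi$, and the Fourier coefficient $c_1$ of $\rho^2$ — which is proportional to $J_1(1)$ because the relevant Kepler-expansion coefficient at eccentricity $1$ involves $J_1$ evaluated at $1$ — one arrives, after collecting powers of $G$ (the $G^{-4}$ from the potential, the $G^{3/2}$ from the oscillatory integral), at the claimed $L^{[1]}(G)=-J_1(1)\sqrt{2\pi}\,G^{-5/2}e^{-G^3/3}(1+\mathcal O(G^{-3/2}))$. For $l>1$ the same deformation gives the exponential factor $e^{-lG^3/3}$; the polynomially-controlled prefactor together with the growth of $|c_l|$ (which is at worst exponential in $l$, by analyticity of $\rho^2$ in a strip) yields the stated bound $|L^{[l]}(G)|\le KG^{-5/2}e^{l-1/2}e^{-lG^3/3}$.

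The main obstacle I anticipate is twofold. First, making the contour deformation rigorous requires controlling $\tilde r_h^{-3}(w)$ \emph{uniformly} on the shifted contour $\operatorname{Im} w = 1/3-\delta$ away from the singularity, i.e. verifying that $\tilde r_h$ has no other singularities or zeros in the closed strip $0\le\operatorname{Im} w\le 1/3$ and decays appropriately as $\operatorname{Re} w\to\pm\infty$ (this uses property \eqref{itm:Homoclinic Infinity}, $\tilde r_h(v)\sim 3v^{2/3}$, so $\tilde r_h^{-3}\sim \tfrac{1}{27}v^{-2}$ is integrable at infinity); also the $\mathcal O(\tilde r^{-2}G^{-4})$ correction term in the potential must be shown to contribute only at relative order $G^{-3/2}$ to $L^{[1]}$, which requires the same type of contour estimate applied to $\tilde r_h^{-5}$. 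Second, pinning down the \emph{exact constant} $-J_1(1)\sqrt{2\pi}$ — not just its order of magnitude — demands careful bookkeeping of the Fourier coefficient $c_1$ of $\rho^2(t)=(1-\cos E(t))^2$ through the Kepler equation $t=E-\sin E$; I would extract $c_1$ by writing $c_1 = \frac{1}{2\pi}\int_0^{2\pi}\rho^2(t)e^{-it}\,dt$, changing variables to $E$ (so $dt=(1-\cos E)\,dE=\rho\,dE$) and recognizing the resulting integral $\frac{1}{2\pi}\int_0^{2\pi}(1-\cos E)^3 e^{-i(E-\sin E)}\,dE$ as a combination of Bessel integrals $J_n(1)=\frac{1}{2\pi}\int_0^{2\pi}e^{i(n E-\sin E)}\,dE$, then using Bessel recurrences to collapse everything onto $J_1(1)$. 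Everything else is routine estimation of the error term $E$ in \eqref{eq:distance formula}'s precursor, which collects the $l>1$ contributions and the subleading part of $l=1$.
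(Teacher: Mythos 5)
Your proposal follows the same overall skeleton as the paper -- Taylor-expand the potential in powers of $\rho^2/(4G^4\tilde r_h^2)$, Fourier-expand $\rho^{2k}$ in the fast angle so that $L$ becomes a series in $e^{il(\xi-G^3v)}$, identify the coefficient of the first harmonic with $J_1(1)$ via the change to the eccentric anomaly, and extract the exponential smallness from the complex singularity of $\tilde r_h$ at distance $1/3$ from the real axis -- but it diverges in the one genuinely technical step, the asymptotics of the oscillatory integrals. You work directly in the variable $w$ and treat $w=i/3$ as a branch point of $\tilde r_h^{-(2k+1)}\sim C^{-(2k+1)}(w-i/3)^{-(2k+1)/2}$, collapsing the contour onto a Hankel loop; the paper instead uses the explicit rational parametrization $s=\tfrac12(\tau+\tau^3/3)$, $\tilde r_h=(1+\tau^2)/2$, under which the integrals become $\int e^{ilG^3(\tau+\tau^3/3)/2}(\tau-i)^{-2k}(\tau+i)^{-2k}\,d\tau$ with genuine poles at $\tau=\pm i$, and then quotes ready-made estimates (Propositions 19 and 22 of \cite{Delshams2019}) for exactly these integrals. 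The paper's route buys rigor cheaply: the integrand is globally meromorphic apart from the explicit exponential, so there is no need to justify the analytic continuation of $\tilde r_h$ throughout the strip or to control the error of the local expansion $\tilde r_h\sim C(w-i/3)^{1/2}$ beyond leading order -- which is precisely the point you correctly flag as the main obstacle in your version, and which would require more information about $\tilde r_h$ near $\pm i/3$ than the leading asymptotics quoted in Section \ref{sec:The-integrable-system}. Your route is standard and would work, but you should keep the full Taylor series in $k$ rather than only the $k=1$ term with an ``absorbed tail'': the uniform-in-$l$ bound $|L^{[l]}|\le KG^{-5/2}e^{l-1/2}e^{-lG^3/3}$ requires estimating every $I(l,k)$ (with the $k$-dependent gain $G^{-4k}\cdot G^{3k}=G^{-k}$ making the $k$-series converge), exactly as the paper does via $|a_{l,k}|\le 4^k$ and $|I(l,k)|\le 8e^lG^{3k-3/2}e^{-lG^3/3}$.
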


\begin{thm}
\label{thm:Theorem Melnikov-generating functions} Choose any $0<v_{-}<v_{+}<\infty$. Then, there exists $K>0$  such that for any $v\in\left[v_{-},v_{+}\right]$ and for any $G$ large enough, the generating functions $T_{1}^{u,s}\left(v,\xi\right)$ satisfy

\[
\left|T_{1}^{s}\left(v,\xi\right)-T_{1}^{u}\left(v,\xi\right)-L\left(v,\xi\right)-E\right|\leq KG^{-\nicefrac{7}{2}}e^{\frac{-G^{3}}{3}},
\]
where $E\in\mathbb{R}$ is a constant and 
\[
\left|\partial_{v}\left(T_{1}^{s}\left(v,\xi\right)-T_{1}^{u}\left(v,\xi\right)\right)-\partial_{v}L\left(v,\xi\right)\right|\leq KG^{-\nicefrac{1}{2}}e^{\frac{-G^{3}}{3}}.
\]
\end{thm}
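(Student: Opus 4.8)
The plan is to compare the two generating functions $T_1^{u,s}$ directly using the nonlinear PDE \eqref{eq:eq T1} they solve. Subtracting the equations for $T_1^s$ and $T_1^u$, and writing $\Delta = T_1^s - T_1^u$, the quadratic term telescopes and one obtains a \emph{linear} transport equation for $\Delta$ of the form
\[
\partial_v \Delta + \frac{1}{2\tilde y_h^2}\,\partial_v\bigl(T_1^s + T_1^u\bigr)\,\partial_v\Delta + G^3\partial_\xi\Delta = 0,
\]
i.e. $\partial_v\Delta + a(v,\xi)\,\partial_v\Delta + G^3\partial_\xi\Delta = 0$ with a coefficient $a = \tfrac{1}{2\tilde y_h^2}\partial_v(T_1^s+T_1^u)$ that is already controlled (exponentially small in $G$, uniformly on the relevant complex strip) by the existence results of Section \ref{sec:Existence-of-the}. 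First I would characterize solutions of this linear equation by the method of characteristics: along the curves $(v(\tau),\xi(\tau))$ with $\dot\xi = G^3$ and $\dot v = 1 + a$, the function $\Delta$ is constant. Since both $T_1^{u}$ and $T_1^{s}$ extend to a common complex domain containing a neighbourhood of the real segment $[v_-,v_+]\times\mathbb T$ and also reaching up to the singularities at $v = \pm i/3$ (this is where the exponentially small size is generated), one can transport $\Delta$ off the real domain and up toward $v = i/3$, where the main contribution lives.

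The second step is to relate $\Delta$ to the Melnikov potential $L$. The standard mechanism: the Melnikov potential is exactly the solution of the \emph{linearized} Hamilton--Jacobi equation $\partial_v L + G^3\partial_\xi L + V(v,\xi) = 0$ with zero asymptotic conditions at both $v\to\pm\infty$ (equivalently, $L = T_1^{s,\text{lin}} - T_1^{u,\text{lin}}$ where the superscript ``lin'' denotes the first-order-in-the-perturbation approximation of each generating function). So I would write $T_1^{u,s} = T_1^{u,s,\text{lin}} + (\text{higher order})$, using the fixed-point/contraction estimates from Section \ref{sec:Existence-of-the} to bound the higher-order remainder of \emph{each} manifold by $C G^{-7/2} e^{-G^3/3}$ (one extra power of the small parameter beyond the size $G^{-5/2}e^{-G^3/3}$ of $L$ itself, coming from the quadratic nature of the nonlinearity), and conclude $|\Delta - L - E| \le K G^{-7/2}e^{-G^3/3}$ on $[v_-,v_+]$. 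The additive constant $E$ appears because the generating functions are only defined up to constants (the Hamilton--Jacobi equation determines $S$ up to a constant on each manifold, and the two constants need not match after the re-parametrization used to extend the unstable manifold across $v=0$). For the derivative estimate, differentiating the relation $\partial_v\Delta + a\,\partial_v\Delta + G^3\partial_\xi\Delta = 0$ and using a Cauchy estimate on a slightly smaller complex domain loses a factor that is, crucially, \emph{polynomial} in $G$: one passes from the $G^{-7/2}e^{-G^3/3}$ bound on $\Delta-L$ in a complex strip of width $\sim G^{-3}$ to a $G^{-1/2}e^{-G^3/3}$ bound on $\partial_v(\Delta - L)$ on the real segment, which is precisely the stated estimate (and note $\partial_v L$ itself is of size $G^{1/2}e^{-G^3/3}$ by Proposition \ref{prop:Melnikov potential}, since $\partial_v$ brings down a factor $G^3$).

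The main obstacle is the \emph{domain management}: one must have both $T_1^u$ and $T_1^s$ — and hence $\Delta$ and the coefficient $a$ — simultaneously defined and small on a common complex domain that comes within $\mathcal O(G^{-3})$ of the singularity $v = i/3$, because only there does the comparison produce the sharp exponent $e^{-G^3/3}$ rather than a weaker one. The unstable manifold is naturally parametrized for $v$ with negative real part and the stable one for positive real part; bridging them requires the alternative parametrization across $v=0$ mentioned at the end of Section \ref{sec:Parametrizations-of-the}, and then pushing the characteristic flow of the transport equation from the real segment up to $\operatorname{Im} v \sim 1/3 - \mathcal O(G^{-3})$ while keeping the flow-box estimates (the perturbation $a$ of the characteristic vector field must stay small enough not to destroy the geometry). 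Controlling the remainder term of the fixed-point argument uniformly up to this near-singular region, and checking that the constant $E$ is genuinely $v$- and $\xi$-independent, are the delicate points; the rest is the routine bookkeeping of Cauchy estimates and the contraction bounds already established.
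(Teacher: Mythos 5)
Your plan follows essentially the same route as the paper's proof: subtracting the two copies of \eqref{eq:eq T1} yields a linear transport equation for $\tilde{\varDelta}=T_1^s-T_1^u$, which the paper straightens by a near-identity change of variables $(v,\xi)=(w+C(w,\xi),\xi)$ (Theorem \ref{thm:change of variables C}) --- this is exactly your method of characteristics; $L=\mathcal{G}^s(V)-\mathcal{G}^u(V)$ is identified as the difference of the first Picard iterates; the error $\varDelta-L$ is bounded \emph{polynomially}, by $KG^{-7/2}$, on the boomerang domain reaching within $\mathcal{O}(G^{-3})$ of $v=\pm i/3$; and the exponential factor is then extracted from the Fourier coefficients of an element of $\mathrm{Ker}\mathcal{L}$ (Lemma \ref{lem:Exponentially small technical lemma}), with $E$ the zero mode and the derivative bound losing exactly the factor $G^3$, as you say. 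The domain-management issues you single out (extension of $T_1^u$ across $v=0$ via a flow parametrization and back) are precisely what Section \ref{sec:Existence-of-the} handles.

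One intermediate claim in your second paragraph is wrong and would derail the proof if taken literally: the contraction estimates do \emph{not} bound the remainder $T_1^{u,s}-L^{u,s}$ of each manifold separately by $CG^{-7/2}e^{-G^3/3}$. Each individual remainder is only polynomially small (the paper obtains $\Vert T_1^{u}-L^{u}\Vert_{1,1/2}\leq KG^{-13/2}$ in Theorem \ref{thm:Existence infinity manifold close to infinity}), which is enormously larger than $e^{-G^3/3}$; a term-by-term triangle inequality on the real segment therefore gives only $|\varDelta-L|\leq KG^{-13/2}$ and no exponential. The exponential smallness holds only for the combination $\varDelta-L-E$, and it is generated exclusively by the mechanism you describe in your first and third paragraphs: the error lies in $\mathrm{Ker}\mathcal{L}$ after straightening, so its nonzero Fourier coefficients in $\xi-G^3w$ are constants that can be evaluated at the vertices of the domain at distance $\kappa G^{-3}$ from $\pm i/3$, where the polynomial bound $G^{-7/2}$ is traded for $G^{-7/2}e^{-G^3/3}$. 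Route the argument through that channel and the rest of your plan matches the paper.
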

From Proposition \ref{prop:Melnikov potential}, Theorem \ref{thm:Theorem Melnikov-generating functions}
and Equation \eqref{eq:H-J parametrization} we deduce Theorem \ref{thm:Main Theorem 2-1}.
We devote Sections \ref{sec:Existence-of-the} and \ref{sec:The-difference-between}
to the proof of Theorem \ref{thm:Theorem Melnikov-generating functions}. 

\section{\label{sec:Existence-of-the} The invariant manifolds in complex domains}

The classical procedure when studying exponentially small splitting
of separatrices is to look for the functions $T_{1}^{u}$ and $T_{1}^{s}$
in a complex common domain $D\times\mathbb{T}_{\sigma}$ where $D\subset\mathbb{C}$
is a connected domain which reaches a neighborhood of size $\mathcal{O}\left(G^{-3}\right)$
(recall that the period of the perturbation \eqref{eq:potential_main_order_term} is $2\pi/G^3$) of the singularities of the unperturbed separatrix, i.e., $v=\pm i/3$ (see Section \ref{sec:The-integrable-system})
and 
\[
\mathbb{T}_{\sigma}=\left\{ \xi\in\mathbb{C}/2\pi\mathbb{Z}:\left|\text{Im}(\xi)\right|<\sigma\right\} .
\]
The idea behind this approach is that for $v\in\mathbb{R}$ we will
get exponentially small bounds on the distance $d\left(v,\xi\right)$
between the invariant manifolds if we show that $d$ is a quasiperiodic function in some suitable coordinates  and
we manage to bound $\left|d\right|$  in a connected domain $D$ which contains a subset of the real line and gets close to the singularities $v=\pm i/3$ .
\begin{figure}
\begin{center}
\includegraphics[scale=0.4]{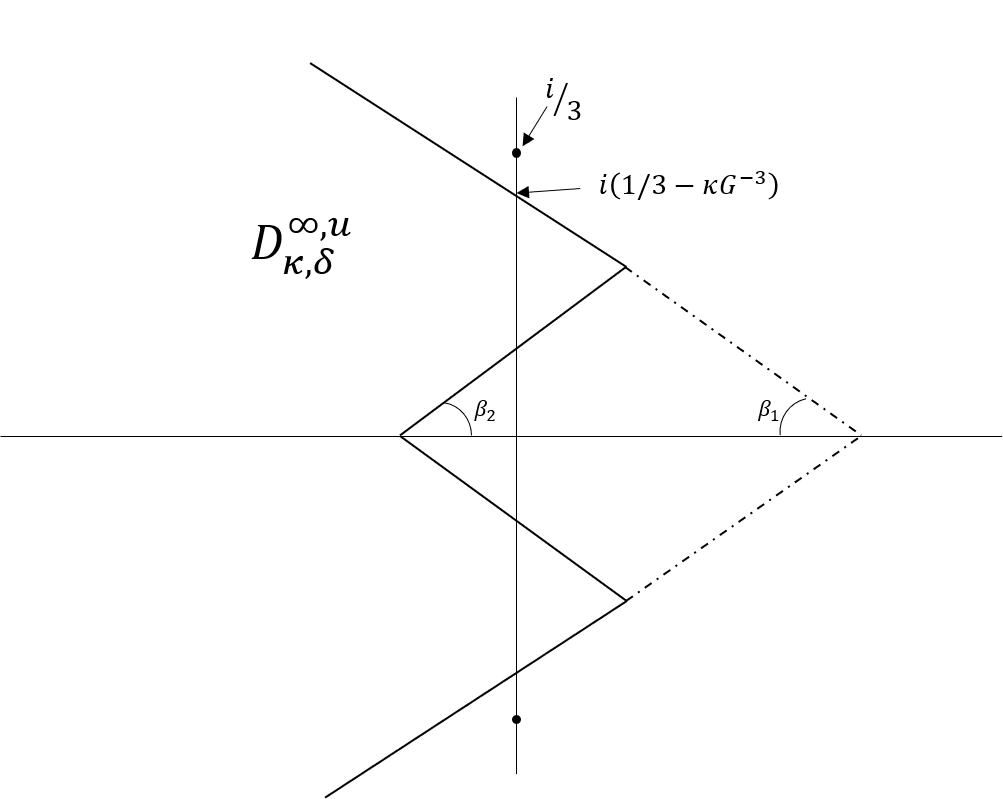}\caption{\label{fig:Domain infinity}The domain $D_{\kappa,\delta}^{\infty,u}$
defined in (\ref{eq:infinity domain}). }
\end{center}
\end{figure}

Since boundary conditions are imposed at infinity, we need to solve
the equation \eqref{eq:eq T1} for $T_{1}^{u}$ (resp. $T_{1}^{s}$)
in a complex unbounded domain reaching $v\to-\infty$ (resp. $v\to\infty$).
On the other hand, in order to measure their difference we need them
to be defined in a common domain, we need to extend one of them across $v=0.$
However, the equation \eqref{eq:eq T1} becomes singular at $v=0$
since $\tilde{y}_{h}\left(0\right)=0.$ To overcome this problem we
divide the process of extension of the invariant manifolds into three
steps.

We first solve equation \eqref{eq:eq T1} together with the
boundary condition $\eqref{eq:Boundary condition}$ in the domain

\begin{equation}
D_{\kappa,\delta}^{\infty,u}=\left\{ v\in\mathbb{C}\colon\left|\text{Im}(v)\right|<-\text{tan}\beta_{1}\text{Re}(v)+1/3-\kappa G^{-3},\right.\left.\left|\text{Im}(v)\right|>\text{tan}\beta_{2}\text{Re}(v)+1/6-\delta\right\} ,\label{eq:infinity domain}
\end{equation}
which does not contain $v=0$ and where $\kappa,\delta$ and $\beta_{1},\beta_{2}\in\left(0,\pi/2\right)$
are fixed independently of $G$ (see Figure \ref{fig:Domain infinity}).
One can check that for $\delta\in\left(0,1/12\right)$, $\kappa\sim\mathcal{O}\left(1\right),$
we can always find $G$ big enough such that this domain is non empty.
Once the existence of $T_{1}^{u}$ in the domain $D_{\kappa,\delta}^{\infty,u}$
is proven, we exploit the symmetry of equation \eqref{eq:eq T1} under
the map $\left(v,\xi\right)\to\left(-v,-\xi\right)$ to atutomatically
deduce the existence of $T_{1}^{s}$ in the domain 
\begin{equation}
D_{\kappa,\delta}^{\infty,s}=\left\{ v\in\mathbb{C}\colon\left|\text{Im}(v)\right|<\text{tan}\beta_{1}\text{Re}(v)+1/3-\kappa G^{-3},\right.\left.\left|\text{Im}(v)\right|>-\text{tan}\beta_{2}\text{Re}(v)+1/6-\delta\right\} .\label{eq:infinity domain stable}
\end{equation}

The next step is to perform the analytical continuation of $T_{1}^{u}$
accross the imaginary axis. Thus, we would have both invariant manifolds
defined on a common domain (this domain will be contained in $D_{\kappa,\delta}^{\infty,s}$
where $T_{1}^{s}$ is already defined). Since $y_{h}\left(0\right)=0$,
the equation \eqref{eq:eq T1} becomes singular at $v=0$ so we change
to a parametrization invariant by the flow in the bounded domain 
\begin{equation}
D_{\rho,\kappa,\delta}=D_{\kappa,\delta}^{\infty,u}\cap\left(\mathrm{Re}(v)>-\rho\right)\label{eq:Domain D rho kappa delta}
\end{equation}
for some finite $\rho>0.$ Then, we use the flow $\phi_{s}$ associated
to the system \eqref{eq:eqs of motion} to extend the unstable manifold
$T_{1}^{u}$ to the domain 

\begin{figure}
\begin{center}
\includegraphics[scale=0.4]{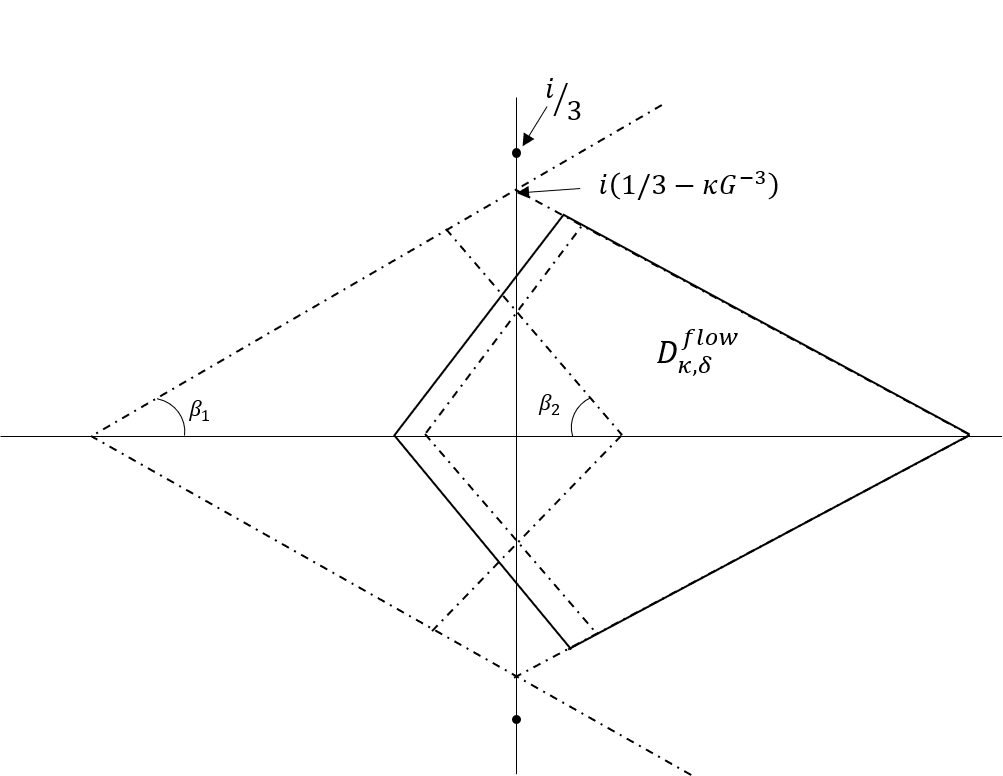}\caption{\label{fig:Domain flow}The domain $D_{\kappa,\delta}^{\mathrm{flow}}$
defined in (\ref{eq:flow domain}). }
\end{center}
\end{figure}

\begin{equation}
D_{\kappa,\delta}^{\mathrm{flow}}=\left\{ v\in\mathbb{C}\colon\left|\text{Im}(v)\right|<-\tan\beta_{1}\text{Re}(v)+1/3-\kappa G^{-3},\right.\left.\left|\text{Im}v\right|<\tan\beta_{2}\text{Re}(v)+1/6+\delta\right\} \label{eq:flow domain}
\end{equation}
which contains $v=0$ (see Figure \ref{fig:Domain flow}). Then we
go back to the original parametrization in a ``boomerang domain'' 

\begin{equation}\label{eq:boomerang domain}
\begin{split}
D_{\kappa,\delta} = & \left\{ v\in\mathbb{C}:\left|\text{Im}(v)\right|<-\text{tan}\beta_{1}\text{Re}(v)+1/3-\kappa G^{-3}, \left|\text{Im}(v)\right|<\text{tan}\beta_{1}\text{Re}(v)+1/3-\kappa G^{-3}\right.,\\
& \left.\left|\text{Im}(v)\right|>-\text{tan}\beta_{2}\text{Re}(v)+1/6-\delta\right\},
\end{split}
\end{equation}
(which does not contain $v=0$) in order to measure the distance between the stable and unstable manifold.

\begin{figure}
\begin{center}
\includegraphics[scale=0.4]{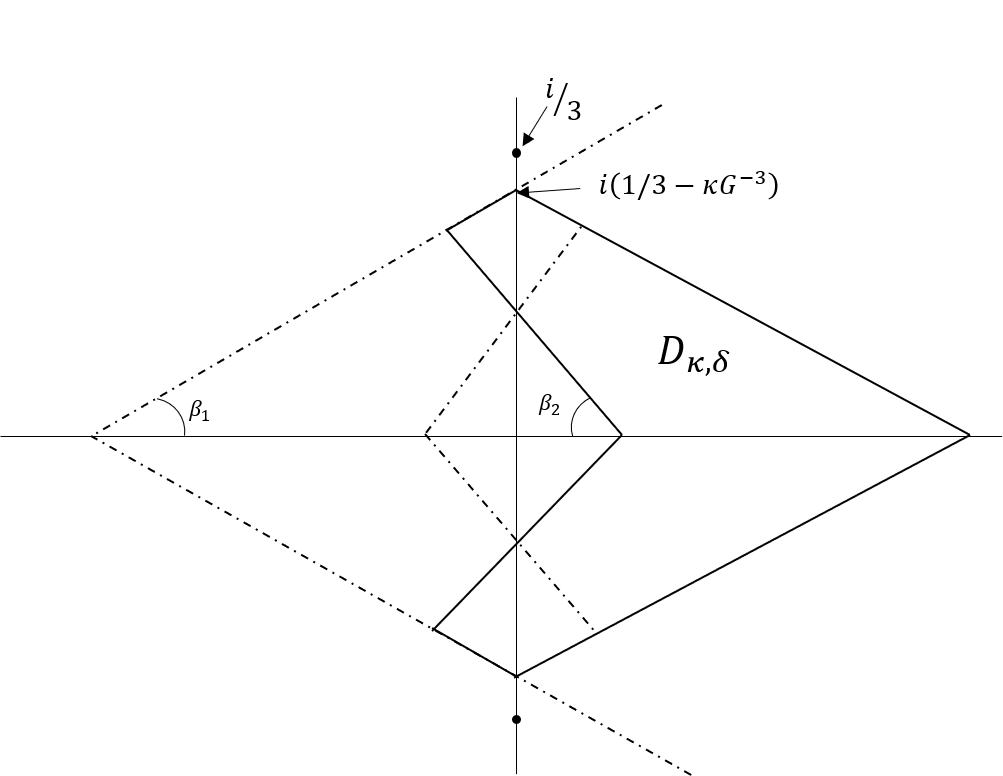}\caption{\label{fig:Domain Boomerang}The domain $D_{\kappa,\delta}$ defined
in (\ref{eq:boomerang domain})}.
\end{center}
\end{figure}

\subsection{Existence of the invariant manifolds close to infinity}

In order to prove existence of the invariant manifolds we rewrite
equation \eqref{eq:eq T1} as a fixed point equation in a suitable
Banach space. We start by defining the linear operator 
\begin{equation}
\mathcal{L}=\partial_{v}+G^{3}\partial_{\xi}\label{eq:linear operator L}
\end{equation}
so equation \eqref{eq:eq T1} reads 
\begin{equation}
\mathcal{L}\left(T_{1}^{u,s}\right)=\mathcal{F}\left(T_{1}^{u,s}\right)\qquad\mathcal{\text{where\qquad}F}\left(T_{1}^{u,s}\right)=-\frac{1}{2\tilde{y}_{h}^{2}}\left(\partial_{v}T_{1}^{u,s}\right)^{2}-V\left(v,\xi\right).\label{eq:eq infinity}
\end{equation}
We introduce the left inverse operators 
\begin{equation}
\begin{split}\mathcal{G}^{u}\left(f\right)\left(v,\xi\right)= & \int_{-\infty}^{0}f\left(v+s,\xi+G^{3}s\right)\text{d}s\\
\mathcal{G}^{s}\left(f\right)\left(v,\xi\right)= & \int_{+\infty}^{0}f\left(v+s,\xi+G^{3}s\right)\text{d}s,
\end{split}
\label{eq:inverse operator}
\end{equation}
 so we can rewrite equation \eqref{eq:eq infinity} as the fixed point
equation 
\begin{equation}
T_{1}^{u,s}=\mathcal{G}^{u,s}\circ\mathcal{F}\left(T_{1}^{u,s}\right).\label{eq: Fixed point equation}
\end{equation}
\begin{rem*}
Throughout this section we will only work with the unstable manifold
so we will omit the superindex $u$ and write $D_{\kappa,\delta}^{\infty},\ T_{1}$
and $\mathcal{G}$ instead of $D_{\kappa,\delta}^{\infty,u},\ T_{1}^{u}$
and $\mathcal{G}^{u}$ if there is no possible confusion.
\end{rem*}
We look for solutions of this equation in the Banach spaces 
\begin{equation}
\mathcal{Z}_{\nu,\mu}^{\infty}=\left\{ h:D_{\kappa,\delta}^{\infty}\times\mathbb{T}_{\sigma}\rightarrow\mathbb{C}\colon h\;\text{is real analytic},\:\left\Vert h\right\Vert _{\nu,\mu}<\infty\right\} ,\label{eq:Banach spaces infinity}
\end{equation}
where 
\[
\left\Vert h\right\Vert _{\nu,\mu}=\sum_{l\in\mathbb{Z}}\left\Vert h^{[l]}\right\Vert _{\nu,\mu}e^{\left|l\right|\sigma}
\]
and 
\[
\left\Vert h^{[l]}\right\Vert _{\nu,\mu}=\sup_{v\in D_{\kappa,\delta}^{\infty}\setminus D_{\rho,\kappa,\delta}}\left|v^{\nu}h^{[l]}\left(v\right)\right|+\sup_{v\in D_{\rho,\kappa,\delta}}\left|\left(v^{2}+1/9\right)^{\mu}h^{[l]}\left(v\right)\right|.
\]
Notice that the first term takes account of the behaviour at infinity
and the second one of the behaviour near the singularities since $v^{2}+1/9=\left(v-i/3\right)\left(v+i/3\right)$.
As we see from \eqref{eq: Fixed point equation} we will also need
to take control on the derivatives so we introduce 
\begin{equation}
\mathcal{\tilde{Z}}_{\nu,\mu}^{\infty}=\left\{ h:D_{\kappa,\delta}^{\infty}\times\mathbb{T}_{\sigma}\rightarrow\mathbb{C}\colon h\;\text{is real analytic},\:\left\llbracket h\right\rrbracket _{\nu,\mu}<\infty\right\} ,\label{eq: banach space derivative}
\end{equation}
where 
\[
\left\llbracket h\right\rrbracket _{\nu,\mu}=\left\Vert h\right\Vert _{\nu,\mu}+\left\Vert \partial_{v}h\right\Vert _{\nu+1,\mu+1}.
\]

The following lemma provides estimates for the norm of the perturbative
potential.
\begin{lem}
\label{lem: Potential estimates}Let V be the perturbative potential
defined in \eqref{eq:perturb potential}. Then, for $G$ large
enough we have that 
\[
\left\Vert V\right\Vert _{2,3/2}\leq KG^{-4}
\]
for a constant $K>0$ independent of $G.$
\end{lem}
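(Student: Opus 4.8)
The plan is to read off the estimate directly from the explicit formula \eqref{eq:potential_main_order_term} together with the asymptotics of the unperturbed homoclinic collected in Section \ref{sec:The-integrable-system}. Recall that $V(v,\xi)=U(\tilde r_h(v),\xi)$ and that, by \eqref{eq:potential_main_order_term},
\[
U\left(\tilde r,G^{3}s\right)=\frac{\rho^{2}\left(G^{3}s\right)}{8G^{4}\tilde r^{3}}\left(1+\mathcal{O}\left(\tilde r^{-2}G^{-4}\right)\right),
\]
and that $\rho(G^{3}s)=1-\cos E$ is bounded (by $2$) and $2\pi$-periodic in $\xi=G^{3}s$, hence analytic on $\mathbb{T}_{\sigma}$ with all Fourier coefficients decaying like $e^{-|l|\sigma}$ after a trivial adjustment of $\sigma$. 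So the $G^{-4}$ prefactor is immediate; the entire content of the lemma is that the factor $\tilde r_h(v)^{-3}$ has finite $\|\cdot\|_{2,3/2}$ norm, i.e. it decays like $v^{-2}$ at infinity and blows up no worse than $(v^{2}+1/9)^{-3/2}$ near the singularities $v=\pm i/3$.

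First I would verify the behaviour at infinity: by item \eqref{itm:Homoclinic Infinity} of Section \ref{sec:The-integrable-system}, $\tilde r_h(v)\sim 3v^{2/3}$ as $|v|\to\infty$, so $\tilde r_h(v)^{-3}\sim (3v^{2/3})^{-3}=\tfrac{1}{27}v^{-2}$, which gives exactly the $v^{\nu}$ weight with $\nu=2$ finite on $D^{\infty}_{\kappa,\delta}\setminus D_{\rho,\kappa,\delta}$; one must check this asymptotic persists uniformly on the complex sectorial domain $D^{\infty}_{\kappa,\delta}$, which follows because the domain stays within a fixed sector around $\mathbb{R}$ where the asymptotic expansion of $\tilde r_h$ is valid. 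Next, near the singularities I would use item \eqref{itm:Homoclinic Singul}: $\tilde r_h(v)\sim C(v\mp i/3)^{1/2}$ with $C^{2}=\pm 2i$, so $\tilde r_h(v)^{-3}\sim C^{-3}(v\mp i/3)^{-3/2}$, which matches the weight $(v^{2}+1/9)^{\mu}$ with $\mu=3/2$ since $v^{2}+1/9=(v-i/3)(v+i/3)$ and near one singularity the other factor is bounded away from $0$ and $\infty$. On the compact transition region (bounded $v$, bounded away from the singularities) $\tilde r_h$ is analytic and bounded below away from $0$, so $\tilde r_h^{-3}$ is bounded there; combining the three regions gives $\|\tilde r_h^{-3}\|_{2,3/2}\le K_0$ for a $G$-independent constant.

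Finally I would assemble the pieces: write $V(v,\xi)=\dfrac{\rho^{2}(\xi)}{8G^{4}}\,\tilde r_h(v)^{-3}\,\bigl(1+R(v,\xi)\bigr)$ with $|R|=\mathcal{O}(\tilde r_h^{-2}G^{-4})$, bound $\|\rho^{2}(\xi)/8\|$ in the relevant analytic-in-$\xi$ norm by an absolute constant (using that $\rho$ extends analytically to $\mathbb{T}_{\sigma}$ for small enough $\sigma$, which may require shrinking $\sigma$ once and for all — this is the only place $\sigma$ enters), use the algebra property of the weighted norm $\|\cdot\|_{\nu,\mu}$ (so that the product with the bounded analytic factor $1+R$ costs only a constant, and the extra decay of $R$ at infinity/near the singularities is more than enough to absorb it), and conclude
\[
\|V\|_{2,3/2}\le \frac{K_0'}{8G^{4}}\bigl(1+\mathcal{O}(G^{-4})\bigr)\le K G^{-4}
\]
for $G$ large. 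The main obstacle — really the only nontrivial point — is checking that the complex domains $D^{\infty}_{\kappa,\delta}$ and $D_{\rho,\kappa,\delta}$ are designed precisely so that the two asymptotic regimes of $\tilde r_h$ (the $v^{2/3}$ regime at infinity and the square-root regime at $v=\pm i/3$) cover the whole domain with uniform constants, and that the domain avoids the zeros of $\tilde r_h$; once that geometry is granted, the estimate is a direct substitution.
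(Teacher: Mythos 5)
Your proposal is correct and follows essentially the same route as the paper: bound $\rho$ by an absolute constant, reduce to showing $\tilde r_h^{-3}\in\mathcal{Z}^{\infty}_{2,3/2}$ via the asymptotics $\tilde r_h\sim 3v^{2/3}$ at infinity and $\tilde r_h\sim C(v\mp i/3)^{1/2}$ at the singularities, and absorb the remainder of \eqref{eq:potential_main_order_term}. One small correction: near $v=\pm i/3$ the remainder $R=\mathcal{O}(\tilde r_h^{-2}G^{-4})$ does not have ``extra decay'' --- it grows like $|v\mp i/3|^{-1}G^{-4}$ and is only bounded (by $KG^{-1}$, not $KG^{-4}$) because $D^{\infty,u}_{\kappa,\delta}$ stays at distance $\kappa G^{-3}$ from the singularities, which is precisely the first step of the paper's proof.
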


\begin{proof}
Since the domain $D_{\kappa,\delta}^{\infty}$ reaches a neighbourhood
of order $\mathcal{O}\left(G^{-3}\right)$ of $v=\pm i/3$ we have
that for $G$ sufficiently large 
\[
\left|\frac{1}{G^{4}\tilde{r}_{h}^{2}\left(v\right)}\right|\leq KG^{-1},
\]
 for $K>0$ independent of $G.$ Therefore, from \eqref{eq:potential_main_order_term}
we deduce that for all $\left(v,\xi\right)\in D_{\kappa,\delta}^{\infty}\times\mathbb{T}_{\sigma}$
\[
\left|V\left(v,\xi\right)\right|\leq\frac{K}{G^{4}\left|\tilde{r}_{h}\left(v\right)\right|^{3}}.
\]
 The conclusion follows now using the asymptotic expressions for $\tilde{r}_{h}\left(v\right)$
obtained in Section \ref{sec:The-integrable-system}.
\end{proof}
We also state algebra-like properties for these spaces, which are
straightforward from their definition and will be useful when dealing
with the fixed point equation. 
\begin{lem}
\label{lem:Algebra-properties-for}Let $\mathcal{Z}_{\nu,\mu}^{\infty}$
be the Banach spaces defined in \eqref{eq:Banach spaces infinity}.
Then

i) If $h\in\mathcal{Z}_{\nu,\mu}^{\infty}$ and $g\in\mathcal{Z}_{\nu',\mu'}^{\infty}$
then $hg\in\mathcal{Z}_{\nu+\nu',\mu+\mu'}^{\infty}$ with 
\[
\left\Vert hg\right\Vert _{\nu+\nu',\mu+\mu'}\leq\left\Vert h\right\Vert _{\nu,\mu}\left\Vert g\right\Vert _{\nu',\mu'}.
\]

ii) If $h\in\mathcal{Z}_{\nu,\mu}^{\infty}$, then $h\in\mathcal{Z}_{\nu-\alpha}^{\infty}$
for $\alpha>0$ with 
\[
\left\Vert h\right\Vert _{\nu-\alpha,\mu}\leq K\left\Vert h\right\Vert _{\nu,\mu}.
\]

iii) If $h\in\mathcal{Z}_{\nu,\mu}^{\infty}$ then, for $\alpha>0$
we have that $h\in\mathcal{Z}_{\nu,\mu-\alpha}^{\infty}$ with 
\[
\left\Vert h\right\Vert _{\nu,\mu-\alpha}\leq KG^{3\alpha}\left\Vert h\right\Vert _{\nu,\mu}.
\]

iv) If $h\in\mathcal{Z}_{\nu,\mu}^{\infty}$ then, for $\alpha>0$
we have that $h\in\mathcal{Z}_{\nu,\mu+\alpha}^{\infty}$ with 
\[
\left\Vert h\right\Vert _{\nu,\mu+\alpha}\leq K\left\Vert h\right\Vert _{\nu,\mu}.
\]
\end{lem}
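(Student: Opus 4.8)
The plan is to reduce each of the four statements to an estimate on the individual Fourier-in-$\xi$ coefficients $h^{[l]}(v)$, so that one only has to track how the two weights $|v^{\nu}|$ and $|v^{2}+1/9|^{\mu}$ appearing in the definition of $\|\cdot\|_{\nu,\mu}$ transform, and then sum over $l$ against the factor $e^{|l|\sigma}$. Everything rests on two elementary geometric facts about the domains \eqref{eq:infinity domain} and \eqref{eq:Domain D rho kappa delta}. First, $D_{\kappa,\delta}^{\infty}\setminus D_{\rho,\kappa,\delta}=D_{\kappa,\delta}^{\infty}\cap\{\mathrm{Re}(v)\le-\rho\}$, so on this ``outer'' region $|v|\ge\rho>0$. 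Second, $D_{\rho,\kappa,\delta}$ is a bounded domain on which $c\,G^{-3}\le|v^{2}+1/9|=|v-i/3|\,|v+i/3|\le C$, with $c,C>0$ independent of $G$: the upper bound because the domain is bounded, the lower bound because the first inequality defining $D_{\kappa,\delta}^{\infty}$ keeps $v$ at distance at least of order $G^{-3}$ from each branch point $v=\pm i/3$ (this is exactly the fact already used in the proof of Lemma \ref{lem: Potential estimates}, via the asymptotics $\tilde r_h(v)\sim C(v\mp i/3)^{1/2}$ of Section \ref{sec:The-integrable-system}). I will also use repeatedly that $e^{|l|\sigma}\le e^{|k|\sigma}e^{|l-k|\sigma}$ since $|l|\le|k|+|l-k|$.

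For item (i), since the Fourier coefficients of a product are given by the convolution $(hg)^{[l]}=\sum_{k\in\mathbb{Z}}h^{[k]}g^{[l-k]}$, on the outer region I bound $|v^{\nu+\nu'}(hg)^{[l]}(v)|\le\sum_{k}|v^{\nu}h^{[k]}(v)|\,|v^{\nu'}g^{[l-k]}(v)|\le\sum_{k}\|h^{[k]}\|_{\nu,\mu}\|g^{[l-k]}\|_{\nu',\mu'}$, and on $D_{\rho,\kappa,\delta}$ the same bound follows after factoring $(v^{2}+1/9)^{\mu+\mu'}=(v^{2}+1/9)^{\mu}(v^{2}+1/9)^{\mu'}$. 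Hence $\|(hg)^{[l]}\|_{\nu+\nu',\mu+\mu'}\le\sum_{k}\|h^{[k]}\|_{\nu,\mu}\|g^{[l-k]}\|_{\nu',\mu'}$; multiplying by $e^{|l|\sigma}$, using the triangle inequality for $|l|$, and summing over $l$ factors the double sum into $\|h\|_{\nu,\mu}\|g\|_{\nu',\mu'}$, as claimed.

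For items (ii)--(iv) I compare the two weights coefficientwise; in each case only one of the two suprema defining $\|h^{[l]}\|$ changes, the other being literally the same term. In (ii), on the outer region $|v^{\nu-\alpha}h^{[l]}(v)|=|v|^{-\alpha}\,|v^{\nu}h^{[l]}(v)|\le\rho^{-\alpha}\|h^{[l]}\|_{\nu,\mu}$ because $|v|\ge\rho$, giving $\|h^{[l]}\|_{\nu-\alpha,\mu}\le\max\{\rho^{-\alpha},1\}\,\|h^{[l]}\|_{\nu,\mu}$. In (iv), on $D_{\rho,\kappa,\delta}$ one has $|(v^{2}+1/9)^{\mu+\alpha}h^{[l]}(v)|=|v^{2}+1/9|^{\alpha}\,|(v^{2}+1/9)^{\mu}h^{[l]}(v)|\le C^{\alpha}\|h^{[l]}\|_{\nu,\mu}$. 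In (iii), on $D_{\rho,\kappa,\delta}$ one has $|(v^{2}+1/9)^{\mu-\alpha}h^{[l]}(v)|=|v^{2}+1/9|^{-\alpha}\,|(v^{2}+1/9)^{\mu}h^{[l]}(v)|\le(cG^{-3})^{-\alpha}\|h^{[l]}\|_{\nu,\mu}=c^{-\alpha}G^{3\alpha}\|h^{[l]}\|_{\nu,\mu}$, which is precisely where the factor $G^{3\alpha}$ comes from. In all three cases the resulting bound on $\|h^{[l]}\|$ is uniform in $l$, so multiplying by $e^{|l|\sigma}$ and summing over $l\in\mathbb{Z}$ yields the stated estimates with $K=\max\{\rho^{-\alpha},1\}$, $K=C^{\alpha}$, and $K=c^{-\alpha}$, respectively. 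There is essentially no obstacle here; the only point needing a word of justification is the lower bound $|v^{2}+1/9|\ge cG^{-3}$ on $D_{\rho,\kappa,\delta}$, which, as noted above, is built into the $\kappa G^{-3}$ term in the definition of the domain.
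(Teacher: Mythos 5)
Your proof is correct; the paper itself offers no proof of this lemma (it is dismissed as ``straightforward from the definition''), and your argument supplies exactly the intended verification: reduce to the Fourier coefficients, track the two weights separately on the outer region $\mathrm{Re}(v)\le-\rho$ (where $|v|\ge\rho$) and on the bounded region $D_{\rho,\kappa,\delta}$ (where $cG^{-3}\le|v^{2}+1/9|\le C$, the lower bound coming from the $\kappa G^{-3}$ offset in \eqref{eq:infinity domain}), and then sum over $l$ against $e^{|l|\sigma}$ using $|l|\le|k|+|l-k|$. The only cosmetic point is in item (i): bounding each factor by the \emph{full} coefficient norm $\|h^{[k]}\|_{\nu,\mu}$ on each of the two regions and then adding the two suprema yields the constant $2$ rather than $1$; to get the stated constant one should keep the outer and inner suprema separate, writing $\|h^{[k]}\|_{\nu,\mu}=A_{k}+B_{k}$ and using $A_{k}A'_{l-k}+B_{k}B'_{l-k}\le(A_{k}+B_{k})(A'_{l-k}+B'_{l-k})$. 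This is immaterial for every use of the lemma in the paper.
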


The following lemma provide estimates for the inverse operator. The
proof follows the exact same lines as in Lemma 5.5. in \cite{Guardia2010}
(see also \cite{Baldoma2012}).
\begin{lem}
\label{lem:Inverse operator lemma}The operator $\mathcal{G}$ defined
on \eqref{eq:inverse operator} satisfies the following properties

i) For any $\nu>1$, $\mu\>1$, $\mathcal{G}:\mathcal{Z}_{\nu,\mu}\rightarrow\mathcal{Z}_{\nu-1,\mu-1}$
is well defined, linear and satisfies $\mathcal{L}\circ\mathcal{G}=\mathrm{Id}.$

 ii)If $h\in\mathcal{Z}_{\nu,\mu}$ for some $\nu>1$, $\mu>1,$ then
\begin{equation}
\left\Vert \mathcal{G}\left(h\right)\right\Vert _{\nu-1,\mu-1}\leq K\left\Vert h\right\Vert _{\nu,\mu}.\label{eq:inverse bound2}
\end{equation}

iii) If $h\in\mathcal{Z}_{\nu,\mu}$ for some $\nu\geq1$, $\mu\geq1,$
then 
\begin{equation}
\left\Vert \partial_{v}\mathcal{G}\left(h\right)\right\Vert _{\nu,\mu}\leq K\left\Vert h\right\Vert _{\nu,\mu}.\label{eq:inverse derivative bound}
\end{equation}
\end{lem}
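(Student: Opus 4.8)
The plan is to exploit the fact that $\mathcal{G}=\mathcal{G}^u$ is, by construction, the integral of $f$ along the characteristics of the transport operator $\mathcal{L}=\partial_v+G^3\partial_\xi$, so that $\mathcal{L}\circ\mathcal{G}=\mathrm{Id}$ is just the fundamental theorem of calculus applied to $g(s)=f(v+s,\xi+G^3s)$. The real content is the quantitative bounds (ii)--(iii), and the natural way to organize them is via the Fourier expansion in $\xi$. Writing $h=\sum_{l\in\mathbb{Z}} h^{[l]}(v)e^{il\xi}$, one has the explicit formula
\[
\bigl(\mathcal{G}(h)\bigr)^{[l]}(v)=\int_{-\infty}^{0} h^{[l]}(v+s)\,e^{ilG^3 s}\,\mathrm{d}s,
\]
so the problem decouples into scalar estimates on each Fourier mode, and the weighted sum $\sum_l\|\cdot\|_{\nu,\mu}e^{|l|\sigma}$ defining $\|\cdot\|_{\nu,\mu}$ will be controlled mode by mode; it therefore suffices to prove $\|(\mathcal{G}(h))^{[l]}\|_{\nu-1,\mu-1}\le K\|h^{[l]}\|_{\nu,\mu}$ with $K$ independent of $l$ and $G$, together with the analogous derivative bound, and then sum against $e^{|l|\sigma}$. (The factor $e^{|l|\sigma}$ is untouched by $\mathcal{G}$ since $\mathcal{G}$ acts trivially on the $\xi$-dependence beyond the phase $e^{ilG^3s}$.)

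For a fixed mode, I would integrate along the segment $s\mapsto v+s$ staying inside $D_{\kappa,\delta}^{\infty,u}$ — this is exactly why the domain was defined with the outer boundary of slope $\tan\beta_1$ reaching only to $1/3-\kappa G^{-3}$ and the inner boundary of slope $\tan\beta_2$: a horizontal (in the $v$-variable) ray going to $\mathrm{Re}(v)\to-\infty$ from any point of the domain stays in the domain, which is what makes $\mathcal{G}^u$ well defined there. Splitting the integral according to whether $v+s$ lies in $D_{\kappa,\delta}^{\infty,u}\setminus D_{\rho,\kappa,\delta}$ (the ``infinity'' region, weight $|v|^{-\nu}$) or in $D_{\rho,\kappa,\delta}$ (the ``singularity'' region, weight $|v^2+1/9|^{-\mu}$), one bounds $|h^{[l]}(v+s)|$ by $\|h^{[l]}\|_{\nu,\mu}$ times the appropriate inverse weight and estimates the resulting one-dimensional integrals $\int |v+s|^{-\nu}\mathrm{d}s\lesssim |v|^{-\nu+1}$ (valid since $\nu>1$) and $\int |(v+s)^2+1/9|^{-\mu}\mathrm{d}s\lesssim |v^2+1/9|^{-\mu+1}$ (valid since $\mu>1$), which accounts for the loss of one power in both weights and yields (ii). This is the routine but slightly delicate part: one must check that along the integration ray the distance to the singularities $v=\pm i/3$ is comparable to $|v^2+1/9|$, using that the ray is roughly horizontal and the domain stays a definite (order $\kappa G^{-3}$, but the relevant comparison is uniform) angular wedge away from those points.

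For the derivative bound (iii), the key trick — standard in this setting, cf. the cited Lemma 5.5 in \cite{Guardia2010} — is to integrate by parts in $s$ using $e^{ilG^3s}=\frac{1}{ilG^3}\frac{\mathrm{d}}{\mathrm{d}s}e^{ilG^3s}$ for $l\ne 0$, trading the $\partial_v$ that falls on $\mathcal{G}(h)$ for the oscillatory factor $\frac{1}{ilG^3}$ plus a boundary term and an $\int \partial_v h^{[l]}(v+s)\,e^{ilG^3s}\,\mathrm{d}s$ term; for $l=0$ one simply differentiates under the integral and uses $\partial_v\mathcal{G}^u(h)^{[0]}=h^{[0]}$ directly. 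The gain $|l|^{-1}G^{-3}$ from the oscillation compensates exactly the fact that we are not allowed to lose a weight power here (the bound is in $\|\cdot\|_{\nu,\mu}$, not $\|\cdot\|_{\nu-1,\mu-1}$): the non-oscillatory boundary contribution is estimated directly by the pointwise inverse-weight bound without any integration, hence without weight loss, while the remaining integral term is handled by the same one-dimensional estimates as in (ii) but now the extra $G^{-3}$ absorbs the would-be loss. The main obstacle I anticipate is purely bookkeeping: carrying the constants uniformly in $l$ (so that the final sum $\sum_l(\cdots)e^{|l|\sigma}$ converges and reproduces $K\|h\|_{\nu,\mu}$) and verifying the geometric claim that the horizontal integration segments remain inside the domain and at a controlled distance from $\pm i/3$ — everything else is a direct transcription of the argument in \cite{Guardia2010,Baldoma2012}.
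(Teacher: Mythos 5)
Your strategy is the standard one and is, in effect, exactly what the paper does: the paper gives no proof of this lemma, deferring to Lemma 5.5 of \cite{Guardia2010} (see also \cite{Baldoma2012}), and that proof proceeds precisely by Fourier decomposition in $\xi$, mode-by-mode weighted estimates along the leftward integration ray (which stays inside $D^{\infty,u}_{\kappa,\delta}$ by the choice of the slopes $\beta_1,\beta_2$), and the one-dimensional integral bounds you describe for item (ii), including the geometric comparison of the distance from the ray to $\pm i/3$ with $|v^2+1/9|$.

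The one place where your write-up, read literally, describes an argument that would fail is item (iii). After moving the derivative off $h$ by integration by parts you are left with $\partial_v\mathcal{G}(h)^{[l]}=h^{[l]}-ilG^{3}\,\mathcal{G}(h)^{[l]}$; the boundary term $h^{[l]}$ is fine, but the second term carries an \emph{extra} factor $lG^{3}$ (a loss, not a gain of $G^{-3}$), so it cannot be ``handled by the same one-dimensional estimates as in (ii)'': that would give a bound worse by a factor $|l|G^{3}|v^{2}+1/9|$, which is unbounded on the part of the domain at distance $\mathcal{O}(1)$ from the singularities, and is not even uniform in $l$ near them. The gain of $(|l|G^{3})^{-1}$ that you correctly invoke earlier must be extracted from the oscillatory integral itself: for $l\neq0$ one deforms the integration path to a ray $u\mapsto v-ue^{i\theta_{l}}$, $u\geq 0$, with the sign of $\theta_{l}$ chosen so that $\left|e^{ilG^{3}s}\right|$ decays like $e^{-c|l|G^{3}u}$ along it. This localizes the integral to a segment of length $\mathcal{O}\left((|l|G^{3})^{-1}\right)$ near $v$, on which the weight is comparable to its value at $v$, and yields $\left|lG^{3}\mathcal{G}(h)^{[l]}(v)\right|\leq K\sup_{\text{path}}\left|h^{[l]}\right|$ with $K$ uniform in $l$ and $G$ and with no weight loss. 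One must then verify that these tilted rays also remain in the domain at controlled distance from $\pm i/3$ — the same wedge geometry you already flag for the horizontal rays. With that correction (and the immediate $l=0$ case $\partial_v\mathcal{G}(h)^{[0]}=h^{[0]}$, as you note), the sketch is complete and coincides with the cited argument.
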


Now we are ready to solve the fixed point equation.
\begin{thm}
\label{thm:Existence infinity manifold close to infinity}Fix $\kappa>0,\ \delta>0$
and $\sigma>0.$ Then, for $G$ large enough the fixed point equation
\eqref{eq: Fixed point equation} has a unique solution $T_{1}^{u}$
on $D_{\kappa,\delta}^{\infty}\times\mathbb{T}_{\sigma}$ which satisfies
\[
\left\llbracket T_{1}^{u}\right\rrbracket _{1,1/2}\leq b_{0}G^{-4}
\]
with $b_{0}>0$ independent of $G.$ Moreover, if we define the function
\[
L_{1}^{u}\left(v,\xi\right)=\mathcal{G}^{u}\left(V\right)\left(v,\xi\right)
\]
we have 
\begin{equation}
\left\Vert T_{1}^{u}-L_{1}^{u}\right\Vert _{1,1/2}\leq KG^{-13/2}\label{eq:difference with potential}
\end{equation}
where $K>0$ is independent of $G.$
\end{thm}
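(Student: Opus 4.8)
The plan is to reformulate the fixed point equation \eqref{eq: Fixed point equation} as the statement that the nonlinear operator $\mathcal{N} = \mathcal{G}^{u}\circ\mathcal{F}$ is a contraction on a suitable ball in the Banach space $\tilde{\mathcal{Z}}_{1,1/2}^{\infty}$, and then to apply the Banach fixed point theorem. First I would establish that $\mathcal{N}$ sends a ball $B(0,\varrho)\subset\tilde{\mathcal{Z}}_{1,1/2}^{\infty}$ of radius $\varrho = b_0 G^{-4}$ into itself. The linear part is $\mathcal{G}^{u}(V)$; by Lemma~\ref{lem: Potential estimates} we have $\|V\|_{2,3/2}\leq K G^{-4}$, and by Lemma~\ref{lem:Inverse operator lemma} (i)--(iii) this gives $\llbracket \mathcal{G}^{u}(V)\rrbracket_{1,1/2}\leq K G^{-4}$, which already accounts for the $L_1^{u}$ term. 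The genuinely nonlinear part is the quadratic term $-\tfrac{1}{2\tilde{y}_h^2}(\partial_v T_1)^2$. Here I would use that $\tilde{y}_h^{-2}$, by the asymptotics in Section~\ref{sec:The-integrable-system} (items~\ref{itm:Homoclinic Infinity} and~\ref{itm:Homoclinic Singul}), behaves like $v^{2/3}$ at infinity and like $(v\mp i/3)^{-1}$ near the singularities — so $\tilde{y}_h^{-2}$ lies in (a weighted version of) the relevant space; combined with the algebra property Lemma~\ref{lem:Algebra-properties-for}(i), $(\partial_v T_1)^2\in\mathcal{Z}_{2(1+1),2(1/2+1)} = \mathcal{Z}_{4,3}$, and multiplying by $\tilde{y}_h^{-2}$ together with the index-shifting properties (ii)--(iv) lands the quadratic term in $\mathcal{Z}_{2,3/2}$ with norm $\leq K G^{3}\varrho^2 = K G^{3}b_0^2 G^{-8} = K b_0^2 G^{-5}$. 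Applying $\mathcal{G}^{u}$ gives a contribution of size $K b_0^2 G^{-5}$, which is $\ll G^{-4}$ for $G$ large; hence $\mathcal{N}(B(0,\varrho))\subset B(0,\varrho)$ once $b_0$ is fixed large enough compared to the constant from Lemma~\ref{lem: Potential estimates}.

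Next I would verify the contraction estimate: for $T_1, \bar{T}_1\in B(0,\varrho)$,
\[
\mathcal{F}(T_1) - \mathcal{F}(\bar{T}_1) = -\frac{1}{2\tilde{y}_h^2}\bigl(\partial_v T_1 + \partial_v \bar{T}_1\bigr)\bigl(\partial_v T_1 - \partial_v \bar{T}_1\bigr),
\]
so by the same algebra and weight-shifting lemmas $\|\mathcal{F}(T_1)-\mathcal{F}(\bar{T}_1)\|_{2,3/2}\leq K G^{3}\varrho\,\llbracket T_1-\bar{T}_1\rrbracket_{1,1/2}\leq K b_0 G^{-1}\llbracket T_1-\bar{T}_1\rrbracket_{1,1/2}$, and composing with $\mathcal{G}^{u}$ (using both (ii) and (iii) of Lemma~\ref{lem:Inverse operator lemma} to control the norm of the function and of its $v$-derivative) yields a Lipschitz constant $\leq K b_0 G^{-1} < 1/2$ for $G$ large. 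The Banach fixed point theorem then produces the unique solution $T_1^{u}$ in this ball with $\llbracket T_1^{u}\rrbracket_{1,1/2}\leq 2\llbracket\mathcal{G}^{u}(V)\rrbracket_{1,1/2}\leq b_0 G^{-4}$, giving the first assertion. Real analyticity is preserved because each operator involved maps real analytic functions to real analytic functions and the convergence is uniform on the domain.

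For the refined estimate \eqref{eq:difference with potential}, I would write $T_1^{u} - L_1^{u} = \mathcal{G}^{u}\circ\mathcal{F}(T_1^{u}) - \mathcal{G}^{u}(-V) = \mathcal{G}^{u}\!\left(-\tfrac{1}{2\tilde{y}_h^2}(\partial_v T_1^{u})^2\right)$. Now I can bound the right-hand side directly: $(\partial_v T_1^{u})^2$ has norm $\leq \llbracket T_1^{u}\rrbracket_{1,1/2}^2\leq b_0^2 G^{-8}$ in $\mathcal{Z}_{4,3}$, multiplying by $\tilde{y}_h^{-2}$ and shifting weights puts the quadratic term in $\mathcal{Z}_{2,3/2}$ with norm $\leq K G^{3}\cdot G^{-8} = K G^{-5}$, and applying $\mathcal{G}^{u}$ costs one more unit in each weight but no powers of $G$, giving $\|T_1^{u}-L_1^{u}\|_{1,1/2}\leq K G^{-5}$. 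To obtain the stated sharper exponent $G^{-13/2}$ one must be more careful with the weight near the singularities: the genuine gain comes from the fact that on $D_{\kappa,\delta}^{\infty}$ the factor $1/(G^4\tilde{r}_h^2)$ is $\mathcal{O}(G^{-1})$ near $v=\pm i/3$, so the effective norm of $V$ in the singular region decays like an extra $G^{-1/2}$ beyond the crude $G^{-4}$, and feeding this through the quadratic term once more produces $G^{-4}\cdot G^{-5/2} = G^{-13/2}$.

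The main obstacle is the interplay between the two different weights in the norm $\|\cdot\|_{\nu,\mu}$ — the $v^{\nu}$ weight governing decay at infinity and the $(v^2+1/9)^{\mu}$ weight governing the blow-up rate at the singularities $v=\pm i/3$ — and in particular keeping track of the powers of $G$ that are spawned by property (iii) of Lemma~\ref{lem:Algebra-properties-for} each time a singular weight is lowered. One must choose the exponents $(\nu,\mu)=(1,1/2)$ precisely so that: (a) the inverse operator $\mathcal{G}^{u}$ is well defined ($\nu>1$, $\mu>1$ after the appropriate multiplications), (b) the quadratic term, after multiplication by $\tilde{y}_h^{-2}$, still lands in a space on which $\mathcal{G}^{u}$ acts, and (c) the net power of $G$ from the quadratic self-interaction stays strictly below that of the linear source term, so that the iteration is genuinely contractive. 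Verifying that the domain $D_{\kappa,\delta}^{\infty}$, reaching only within $\mathcal{O}(G^{-3})$ of the singularities, is compatible with all these weight bounds — that is the heart of the exponentially small splitting analysis — is where the real work lies; the fixed point argument itself is then routine.
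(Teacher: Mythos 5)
Your fixed-point scheme is the same as the paper's, and the first assertion --- existence, uniqueness and the bound $\left\llbracket T_{1}^{u}\right\rrbracket_{1,1/2}\leq b_{0}G^{-4}$ --- goes through: any Lipschitz constant that is $o(1)$ in $G$ suffices there, and your $O(G^{-1})$ does the job. The genuine gap is in the second assertion, estimate \eqref{eq:difference with potential}. As you yourself note, your bookkeeping only yields $\left\Vert T_{1}^{u}-L_{1}^{u}\right\Vert_{1,1/2}\leq KG^{-5}$, and the repair you propose --- extra $G$-smallness of $V$ near $v=\pm i/3$ --- cannot be the mechanism: in the identity $T_{1}^{u}-L_{1}^{u}=\mathcal{G}^{u}\left(\mathcal{F}\left(T_{1}^{u}\right)-\mathcal{F}\left(0\right)\right)$ the potential cancels entirely, so no refinement of Lemma \ref{lem: Potential estimates} can improve the bound. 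What actually produces $G^{-13/2}$ is a sharper estimate of the prefactor of the quadratic term, namely $\left\Vert \tfrac{1}{2\tilde{y}_{h}^{2}}\partial_{v}T_{1}^{u}\right\Vert_{0,0}\leq KG^{-5/2}$. This comes in two steps: first, $\tfrac{1}{2\tilde{y}_{h}^{2}}\partial_{v}T_{1}^{u}$ lies in $\mathcal{Z}_{0,1/2}^{\infty}$ with norm $O(G^{-4})$, because $\tilde{y}_{h}^{-2}$ \emph{vanishes} like $\left(v\mp i/3\right)$ at the singularities and thus compensates one full unit of the singular weight of $\partial_{v}T_{1}^{u}\in\mathcal{Z}_{2,3/2}^{\infty}$; second, lowering the singular exponent from $1/2$ to $0$ via Lemma \ref{lem:Algebra-properties-for}(iii) with $\alpha=1/2$ costs only $G^{3/2}$, not $G^{3}$. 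Then $\left\Vert \mathcal{F}\left(T_{1}^{u}\right)-\mathcal{F}\left(0\right)\right\Vert_{2,3/2}\leq KG^{-5/2}\left\llbracket T_{1}^{u}\right\rrbracket_{1,1/2}\leq KG^{-13/2}$, and Lemma \ref{lem:Inverse operator lemma} finishes.

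The point that derails you is precisely the singular behaviour of $\tilde{y}_{h}^{-2}$: you assert it blows up like $\left(v\mp i/3\right)^{-1}$, which reproduces a typo in item (3) of Section \ref{sec:The-integrable-system}. Since $\tilde{y}_{h}=\tilde{r}_{h}'$ and $\tilde{r}_{h}\sim C\left(v\mp i/3\right)^{1/2}$, one has $\tilde{y}_{h}\sim\tfrac{C}{2}\left(v\mp i/3\right)^{-1/2}$ and hence $\tilde{y}_{h}^{-2}\sim\tfrac{4}{C^{2}}\left(v\mp i/3\right)$. Had your version been correct, the quadratic term $\tfrac{1}{2\tilde{y}_{h}^{2}}\left(\partial_{v}h\right)^{2}$ would only lie in $\mathcal{Z}_{\cdot,4}^{\infty}$, bringing it back to $\mathcal{Z}_{2,3/2}^{\infty}$ would cost $G^{15/2}$, and the resulting Lipschitz constant $O(G^{7/2})$ would destroy the contraction altogether; your quoted cost $G^{3}$ does not follow from a consistent weight assignment under either sign of the exponent. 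This weight count is exactly the ``real work'' you defer to at the end, and it is what both the contraction constant $KG^{-5/2}$ and the exponent $13/2$ hinge on.
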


\begin{proof}
We show that $T_1$  is the unique solution the fixed point equation \eqref{eq: Fixed point equation}. For that we first check that the operator $\mathcal{G}\circ\mathcal{F}$ is well defined from $\mathcal{\tilde{Z}}_{1,1/2}$
to itself. Indeed, from Lemma \ref{lem: Potential estimates} we have
that
\[
\left\Vert V\right\Vert _{2,3/2}\leq KG^{-4}.
\]
Then, the result follows from direct application of the properties of the homoclinic solution stated in Section \ref{sec:The-integrable-system}, the algebra properties
of the norm stated in Lemma \ref{lem:Algebra-properties-for} and
Lemma \ref{lem:Inverse operator lemma} since we obtain that for $h\in \mathcal{\tilde{Z}}_{1,1/2}$
\begin{equation}
\left\llbracket \mathcal{G}\circ\mathcal{F}\left(h\right)\right\rrbracket _{1,1/2}\leq K \min\left(\left\llbracket h\right\rrbracket_{1,1/2}, G^{-4}\right)
\end{equation} 
for some $K>0$ independent of $G$. In particular we deduce that there exists $b_{0}>0$ independent of $G$ such that 
\[
\left\llbracket \mathcal{G}\circ\mathcal{F}\left(0\right)\right\rrbracket _{1,1/2}\leq\frac{b_{0}}{2}G^{-4}.
\]
Then in order to show existence and uniqueness of solutions it is
enough to show that the map $\mathcal{G}\circ\mathcal{F}$ is contractive
on the ball  $B\left(b_{0}G^{-4}\right)\subset\mathcal{\tilde{Z}}_{1,1/2}$ centered at 0. For that purpose we write 
\[
\mathcal{F}\left(h_{2}\right)-\mathcal{F}\left(h_{1}\right)=\frac{1}{2y_{h}^{2}}\left(\partial_{v}h_{1}+\partial_{v}h_{2}\right)\left(\partial_{v}h_{1}-\partial_{v}h_{2}\right)
\]
so using that $h_{1},h_{2}\in B\left(b_{0}G^{-4}\right)\subset\mathcal{\tilde{Z}}_{1,1/2,\kappa,\delta,\sigma}$
we have 
\begin{eqnarray*}
\left\Vert \mathcal{F}\left(h_{2}\right)-\mathcal{F}\left(h_{1}\right)\right\Vert _{2,3/2} & \leq & \left\Vert \frac{1}{2y_{h}^{2}}\left(\partial_{v}h_{1}+\partial_{v}h_{2}\right)\right\Vert _{0,0}\left\Vert \partial_{v}h_{1}-\partial_{v}h_{2}\right\Vert _{2,3/2}\\
 & \leq & KG^{3/2}\left\Vert \frac{1}{2y_{h}^{2}}\left(\partial_{v}h_{1}+\partial_{v}h_{2}\right)\right\Vert _{0,1/2}\left\llbracket h_{1}-h_{2}\right\rrbracket _{1,1/2}\\
 & \leq & KG^{-5/2}\left\llbracket h_{1}-h_{2}\right\rrbracket _{1,1/2},
\end{eqnarray*}
and contractivity follows from Lemma \ref{lem:Inverse operator lemma}
(enlarging $G$ if necessary).

To obtain \eqref{eq:difference with potential} we notice that  
\begin{eqnarray*}
\left\Vert T_{1}-L_{1}^{u}\right\Vert _{1,1/2} & = & \left\Vert \mathcal{G}\circ\left(\mathcal{F}\left(T_{1}\right)-\mathcal{F}\left(0\right)\right)\right\Vert _{1,1/2}\\
 & \leq & \left\llbracket \mathcal{G}\circ\left(\mathcal{F}\left(T_{1}\right)-\mathcal{F}\left(0\right)\right)\right\rrbracket _{1,1/2}\\
 & \leq & KG^{-5/2}\left\llbracket T_{1}\right\rrbracket _{1,1/2}\leq KG^{-13/2}.
\end{eqnarray*}
\end{proof}

Since the parametrization \eqref{eq:H-J parametrization}  becomes singular at $v=0$, in the next section we look for a new parametrization of the unstable manifold which is regular at $v=0$ and therefore allows us to extend it  across $v=0$.

\subsection{Analytic continuation of the solution to the domain $D_{\kappa,\delta}^{\text{flow}}$}

In order to measure the distance between the stable and unstable manifolds
we need them to be defined in a common domain. However, a parametrization of the form
\[
\varGamma\left(v,\xi\right)=\left(\begin{array}{c}
\tilde{r}\left(v,\xi\right)\\
\tilde{y}\left(v,\xi\right)
\end{array}\right)=\left(\begin{array}{c}
\tilde{r}_{h}\left(v\right)\\
\frac{1}{\tilde{y}_{h}\left(v\right)}\partial_{v}T^{u}
\end{array}\right)
\]
becomes undefined at $v=0$. To avoid this difficulty we look for a different parametrization of the unstable manifold in the domain $D_{\rho,\kappa,\delta}$ \eqref{eq:Domain D rho kappa delta}
which does not contain $v=0$ and then extend it by the flow. In order to proceed, we introduce the
Banach spaces
\begin{equation}
\mathcal{Y}_{\mu,\rho,\kappa,\delta,\sigma}=\left\{ h:D_{\rho,\kappa,\delta}\times\mathbb{T}_{\sigma}\rightarrow\mathbb{C}\colon h\;\text{is real analytic},\:\left\Vert h\right\Vert _{\mu}<\infty\right\} \label{eq: Banach spaces second part}
\end{equation}
where 
\begin{equation}
\left\Vert h\right\Vert _{\mu}=\sum_{l\in\mathbb{Z}}\left\Vert h^{\left[l\right]}\right\Vert _{\mu}
\end{equation}
and
\begin{equation}
\left\Vert h^{\left[l\right]}\right\Vert _{\mu}=\sup_{v\in D_{\rho,\kappa,\delta}}\left|\left(v^{2}+1/9\right)^{\mu}h^{\left[l\right]}\left(v\right)\right|
\end{equation}
and the analogues of \eqref{eq: banach space derivative} 
\[
\mathcal{\tilde{Y}}_{\mu,\rho,\kappa,\delta}=\left\{ h:D_{\rho,\kappa,\delta}\times\mathbb{T}_{\sigma}\rightarrow\mathbb{C}\colon h\;\text{is real analytic},\:\left\llbracket h\right\rrbracket _{\mu}<\infty\right\} 
\]
with 
\[
\left\llbracket h\right\rrbracket _{\mu}=\left\Vert h\right\Vert _{\mu}+\left\Vert \partial_{v}h\right\Vert _{\mu+1}.
\]
\begin{rem}
\label{rem:Remark on the Banach spaces}Throughout this section we
will work on different domains $D_{\rho,\kappa,\delta}$, $D_{\kappa,\delta}^{\text{flow}}$
and $\tilde{D}_{\kappa,\delta}$ (the latter is defined in \eqref{eq:boomerang domain tilde}). We will denote by $\mathcal{Y}_{\mu,\kappa,\delta}$
the analogue to the Banach spaces  \eqref{eq: Banach spaces second part} associated to the domain $\tilde{D}_{\kappa,\delta}$, and by  $\mathcal{Y}_{\mu,\kappa,\delta}^{\text{flow}}$ the analogues for domain $D_{\kappa,\delta}^{\text{flow}}$ \eqref{eq:flow domain} (in this case for vectorial functions since we will work with vector fields on the plane).
\end{rem}

\subsubsection{From Hamilton-Jacobi parametrizations to parametrizations invariant
by the flow}

We look for a change of variables of the form $\text{\ensuremath{\mathrm{Id}}}+g:\left(v,\xi\right)\mapsto\left(v+g\left(v,\xi\right),\xi\right)$
such that 
\begin{equation}\label{eq:gammahat}
\hat{\varGamma}\left(v,\xi\right)=\varGamma\circ\left(\text{Id}+g\right)\left(v,\xi\right)
\end{equation}
satisfies 
\[
\phi_{s}\left(\hat{\varGamma}\left(v,\xi\right)\right)=\hat{\varGamma}\left(v+s,\xi+G^{3}s\right).
\]
Denoting by $X$ the vector field generated by the Hamiltonian \eqref{eq:rescaled_hamiltonian},
this equation is equivalent to 
\begin{equation}
X\circ\hat{\varGamma}=\mathcal{L}\left(\hat{\varGamma}\right),\label{eq: flow param equation}
\end{equation}
which we can rewrite as 
\begin{equation}
\mathcal{L}\left(g\right)\left(v,\xi\right)=\mathcal{F}\circ\left(\mathrm{Id}+g\right)\left(v,\xi\right)\qquad\mathcal{\text{where\qquad}F}=\frac{1}{y_{h}^{2}}\partial_{v}T_{1}\label{eq:change of param eq}
\end{equation} 
and $\mathcal{L}$ stands for the differential operator \eqref{eq:linear operator L}. As before we transform \eqref{eq:change of param eq} into a fixed
point equation. Thus, we introduce the inverse operator 
\[
\tilde{\mathcal{G}}\left(h\right)=\sum_{l\in\mathbb{Z}}\tilde{\mathcal{G}}\left(h\right)^{[l]}e^{il\xi}
\]
where 
\begin{align}
\tilde{\mathcal{G}}\left(h\right)^{[l]} & =\int_{v_{1}}^{v}e^{ilG^{3}\left(t-v\right)}h^{[l]}\left(t\right)\text{d}t\nonumber \\
\tilde{\mathcal{G}}\left(h\right)^{[0]} & =\int_{-\rho}^{v}h^{[l]}\left(t\right)\text{d}t\label{eq:inverse operator 2}\\
\tilde{\mathcal{G}}\left(h\right)^{[l]} & =\int_{\bar{v}_{1}}^{v}e^{ilG^{3}\left(t-v\right)}h^{[l]}\left(t\right)\text{d}t.\nonumber 
\end{align}
and $v_{1},\bar{v}_{1}$ are the top and bottom points of the domain
$D_{\rho,\kappa,\delta}$ defined in equation \eqref{eq:Domain D rho kappa delta}.
The following lemma is proved as Lemma 5.5 in \cite{Guardia2010}.
\begin{lem}
\label{lem:Inverse operator lemma 2}The operator $\mathcal{\tilde{G}}$
defined on \ref{eq:inverse operator 2} satisfies the following properties.

i) For any $\mu\geq0$, $\mathcal{\tilde{G}}:\mathcal{Y}_{\mu,\rho,\kappa,\delta,\sigma}\rightarrow\mathcal{Y}_{\mu,\rho,\kappa,\delta,\sigma}$
is well defined, linear and satisfies $\mathcal{L}\circ\mathcal{\tilde{G}}=\text{Id}.$

ii) If $h\in\mathcal{Y}_{\mu,\rho,\kappa,\delta,\sigma}$ for some
$\mu>1,$ then 
\begin{equation}
\left\Vert \mathcal{\tilde{G}}\left(h\right)\right\Vert _{\mu-1}\leq K\left\Vert h\right\Vert _{\mu}.\label{eq:inverse bound-1}
\end{equation}

iii) If $h\in\mathcal{Y}_{\mu,\rho,\kappa,\delta,\sigma}$ for some
$\mu\geq1,$ then 
\begin{equation}
\left\Vert \partial_{v}\mathcal{\tilde{G}}\left(h\right)\right\Vert _{\mu}\leq K\left\Vert h\right\Vert _{\mu}.\label{eq:inverse derivative bound-1}
\end{equation}
\end{lem}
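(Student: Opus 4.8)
The statement to prove is Lemma~\ref{lem:Inverse operator lemma 2}, giving mapping properties and bounds for the operator $\tilde{\mathcal{G}}$ defined in \eqref{eq:inverse operator 2}. The plan is to exploit that $\tilde{\mathcal{G}}$ acts Fourier-mode by Fourier-mode in $\xi$, so that everything reduces to one-dimensional integral estimates along paths in the bounded domain $D_{\rho,\kappa,\delta}$, combined with careful bookkeeping of the weights $(v^2+1/9)^{\mu}$ that control the behaviour near the singularities $v=\pm i/3$.

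\medskip

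\textbf{Step 1: the identity $\mathcal{L}\circ\tilde{\mathcal{G}}=\mathrm{Id}$ and linearity.} For each harmonic $l$, $\tilde{\mathcal{G}}(h)^{[l]}(v)=\int_{v_l}^{v}e^{ilG^3(t-v)}h^{[l]}(t)\,\mathrm{d}t$ (with $v_0=-\rho$, and $v_1,\bar v_1$ the top and bottom vertices of $D_{\rho,\kappa,\delta}$), which is a path integral of a holomorphic integrand; since $D_{\rho,\kappa,\delta}$ is simply connected this defines a holomorphic function, so the output is real analytic provided $h$ is. Differentiating under the integral sign gives $\partial_v\tilde{\mathcal{G}}(h)^{[l]}=ilG^3\,e^{ilG^3(t-v)}\big|$-term $+\,h^{[l]}(v)$, and reassembling the Fourier series yields $\mathcal{L}(\tilde{\mathcal{G}}(h))=(\partial_v+G^3\partial_\xi)\tilde{\mathcal{G}}(h)=h$. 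Linearity is immediate from linearity of the integral. This is the easy part.

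\medskip

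\textbf{Step 2: the weighted $L^\infty$ bounds, items (ii) and (iii).} Here I would split according to the harmonic. For $l=0$ the integrand has no oscillation: $\tilde{\mathcal{G}}(h)^{[0]}(v)=\int_{-\rho}^v h^{[0]}(t)\,\mathrm{d}t$, and since $|h^{[0]}(t)|\le \|h^{[0]}\|_\mu |t^2+1/9|^{-\mu}$ with $\mu>1$, the integral of $|t^2+1/9|^{-\mu}$ along a path from $-\rho$ to $v$ staying in $D_{\rho,\kappa,\delta}$ is bounded by $K|v^2+1/9|^{-(\mu-1)}$ — this is the standard ``integrating a power of the distance to the singularity gains one power'' estimate, valid because the domain stays at distance $\gtrsim\kappa G^{-3}$ from $\pm i/3$ but, crucially, the constant $K$ here does \emph{not} blow up with $G$ when $\mu-1>0$ (the integral converges at the singularity itself). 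For $l\neq0$ the factor $e^{ilG^3(t-v)}$ must be handled so that the path of integration is chosen with $\mathrm{Re}(ilG^3(t-v))\le 0$, i.e. monotone in the direction that makes the exponential a decaying weight; this is exactly why the endpoints $v_1$ for $l>0$ and $\bar v_1$ for $l<0$ are chosen as the top/bottom vertices. Then $|e^{ilG^3(t-v)}|\le1$ along the path and one gets the same bound as in the $l=0$ case, \emph{uniformly in $l$}; summing over $l$ converges because the norm $\|h\|_\mu=\sum_l\|h^{[l]}\|_\mu$ already packages the $l$-sum. For (iii), use the explicit formula for $\partial_v\tilde{\mathcal{G}}(h)^{[l]}$: the term $h^{[l]}(v)$ contributes $\|h\|_\mu$ directly, and the remaining term $ilG^3\int e^{ilG^3(t-v)}h^{[l]}(t)\,\mathrm{d}t$ is estimated by integrating by parts once (or by the elementary bound $|lG^3|\int_\gamma |e^{ilG^3(t-v)}|\,|\mathrm{d}t|\le K$ along the monotone path) so that the gain of $|lG^3|$ from integration compensates the prefactor, again with a $G$-independent constant — this is where $\mu\ge1$ rather than $\mu>1$ suffices, since no convergence at the singularity is needed once we keep the full weight $(v^2+1/9)^\mu$ on $\partial_v\tilde{\mathcal{G}}(h)$.

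\medskip

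\textbf{Main obstacle.} The delicate point is the uniformity in $G$ of the constants $K$, which hinges on two things working simultaneously: the path of integration must be chosen, for each $l$, so that the oscillatory factor $e^{ilG^3(t-v)}$ never exceeds $1$ in modulus (a geometric condition on $D_{\rho,\kappa,\delta}$ relating the slopes $\beta_1,\beta_2$ to the direction of the flow), and the same path must keep the weight $|t^2+1/9|^{-\mu}$ integrable with a bound of the right order in $|v^2+1/9|$. One has to verify that a single admissible path (e.g. a polygonal path first moving vertically to the height of $v$, then horizontally, or a path following level sets) satisfies both constraints on the whole domain, including near the vertices $v_1,\bar v_1$ where the two defining inequalities of $D_{\rho,\kappa,\delta}$ are nearly active. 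Since the lemma is asserted to follow ``exactly as Lemma~5.5 in \cite{Guardia2010}'', I would organize the proof to mirror that reference: fix the path construction once, record the two scalar inequalities it satisfies, and then items (i)--(iii) drop out. I do not expect genuinely new difficulties beyond transcribing that argument to the present domain $D_{\rho,\kappa,\delta}$ and weights.
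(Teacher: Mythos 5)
The paper does not actually prove this lemma: it is dispatched with the citation ``proved as Lemma 5.5 in \cite{Guardia2010}'', so your proposal can only be compared with the standard argument from that reference. Your overall skeleton is the right one (work harmonic by harmonic, check $\mathcal{L}\circ\tilde{\mathcal{G}}=\mathrm{Id}$ by differentiating under the integral, choose the basepoint $v_1$ for $l>0$ and $\bar v_1$ for $l<0$ so that $\left|e^{ilG^{3}(t-v)}\right|\leq1$ along the path, and gain one power of the weight by integrating $\left|t^{2}+1/9\right|^{-\mu}$).

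However, there is a genuine gap in your treatment of the nonzero harmonics in item (ii). You claim that once $\left|e^{ilG^{3}(t-v)}\right|\leq 1$ along the path ``one gets the same bound as in the $l=0$ case''. This fails: for $l=0$ the basepoint $-\rho$ is at distance $\mathcal{O}(1)$ from $\pm i/3$, so $\int_{-\rho}^{v}\left|t^{2}+1/9\right|^{-\mu}\left|\mathrm{d}t\right|$ is dominated by the contribution near the endpoint $v$ and is indeed $\leq K\left|v^{2}+1/9\right|^{-(\mu-1)}$; but for $l\neq0$ the basepoint is the vertex $v_1$ (or $\bar v_1$), which lies at distance $\kappa G^{-3}$ from the singularity. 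Any path starting there accumulates $\int\left|t^{2}+1/9\right|^{-\mu}\left|\mathrm{d}t\right|\gtrsim G^{3(\mu-1)}$ from the portion near the vertex, and this is not controlled by $K\left|v^{2}+1/9\right|^{-(\mu-1)}$ when $v$ is at distance $\mathcal{O}(1)$ from $\pm i/3$. No choice of path can avoid this, since every admissible path must start at the vertex. The exponential factor must therefore be used quantitatively, not merely bounded by $1$: one splits the path at a scale comparable to $\left|v\mp i/3\right|$, obtains the factor $\left(\left|l\right|G^{3}\right)^{-1}$ from $\int e^{-\left|l\right|G^{3}r\sin\beta}\,\mathrm{d}r$ on the piece near $v$ (which, combined with $\left|v^{2}+1/9\right|\gtrsim G^{-3}$, yields the gain of one power of the weight), and uses on the piece near the vertex that the distance to the singularity grows linearly along the segment together with the bound $x^{\mu}e^{-cx}\leq K$ for $x=\left|l\right|G^{3}\left|v\mp i/3\right|\geq\left|l\right|\kappa$ to absorb the $G^{3\mu}$ blow-up of the weight. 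You gesture at exactly this mechanism when discussing item (iii) and in your ``main obstacle'' paragraph, so the fix is within reach, but as written the central estimate of item (ii) for $l\neq 0$ does not go through.
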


Therefore, solutions of \eqref{eq:change of param eq} are also fixed
points of 
\begin{equation}
g=\tilde{\mathcal{G}}\circ\mathcal{F}\circ\left(\mathrm{Id}+g\right).\label{eq:Fixed point for g}
\end{equation}
We state two technical lemmas which will be useful for dealing with
compositions of functions and are deduced from the proofs of Lemmas
5.14 and 5.15 in \cite{Guardia2016}.
\begin{lem}
Fix constants $\delta'<\delta,$ $\rho'<\rho$ , $\kappa'>\kappa$
and take $h\in\mathcal{Y}_{\mu,\rho,\kappa,\delta,\sigma}.$ Then,
$\partial_{v}h\in\mathcal{Y}_{\mu,\rho',\kappa',\delta',\sigma}$
and satisfy 
\[
\left\Vert \partial_{v}h\right\Vert _{\mu}\leq\frac{G^{3}}{\left(\kappa'-\kappa\right)}\left(\frac{\kappa^{'}}{\kappa}\right)^{\mu}\left\Vert h\right\Vert _{\mu}.
\]
\end{lem}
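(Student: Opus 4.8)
The plan is to prove this as a weighted Cauchy estimate, carried out one Fourier mode at a time. Since $\partial_v$ does not act on the $\xi$--variable and the angular width $\sigma$ is not modified, $(\partial_v h)^{[l]}=\partial_v\big(h^{[l]}\big)$; and because $\|\cdot\|_\mu$ is an $\ell^1$--sum over the Fourier index $l$, it suffices to prove, for every $l\in\mathbb{Z}$,
\[
\big\|\partial_v h^{[l]}\big\|_\mu\ \le\ \frac{G^3}{\kappa'-\kappa}\Big(\frac{\kappa'}{\kappa}\Big)^{\!\mu}\,\big\|h^{[l]}\big\|_\mu ,
\]
and then sum over $l$. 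So from here on I work with a single function $g:=h^{[l]}$, which is analytic on $D_{\rho,\kappa,\delta}$.

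First I would carry out the geometric step. Fix $v\in D_{\rho',\kappa',\delta'}$. Each of the three inequalities defining $D_{\rho,\kappa,\delta}=D_{\kappa,\delta}^{\infty,u}\cap\{\mathrm{Re}\,v>-\rho\}$ holds at $v$ with a definite amount of slack: the bound $\mathrm{Re}\,v>-\rho$ with slack $\ge\rho-\rho'$, the lower bound $|\mathrm{Im}\,v|>\tan\beta_2\,\mathrm{Re}\,v+1/6-\delta$ with slack $\ge\delta-\delta'$, and the upper bound $|\mathrm{Im}\,v|<-\tan\beta_1\,\mathrm{Re}\,v+1/3-\kappa G^{-3}$ with slack $\ge(\kappa'-\kappa)G^{-3}$ in the imaginary direction. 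For $G$ large the first two slacks are $\mathcal{O}(1)$, hence dominated by the last one, so the closed disk $\overline{B}(v,r)$ centred at $v$ with radius $r$ of order $(\kappa'-\kappa)G^{-3}$ is contained in $D_{\rho,\kappa,\delta}$; moreover every $w\in D_{\rho,\kappa,\delta}$ satisfies $|w\mp i/3|\gtrsim\kappa G^{-3}$, because the neighbourhood of the singularities $v=\pm i/3$ that is removed is exactly the region cut off by the upper constraint (near $\pm i/3$ the lower constraint and $\mathrm{Re}\,v>-\rho$ are inactive). Crucially, the opening--angle factor entering the admissible radius $r$ is the same as the one entering the lower bound for $|w\mp i/3|$, so it will cancel in the next step.

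Then the estimate follows from Cauchy's formula for the first derivative. For $v\in D_{\rho',\kappa',\delta'}$,
\[
\partial_v g(v)=\frac{1}{2\pi i}\oint_{|w-v|=r}\frac{g(w)}{(w-v)^2}\,dw,
\qquad\text{whence}\qquad
|\partial_v g(v)|\ \le\ \frac1r\,\|g\|_\mu\,\sup_{|w-v|=r}\frac{1}{|w^2+1/9|^\mu} .
\]
Multiplying through by $|v^2+1/9|^\mu$ and factoring $v^2+1/9=(v-i/3)(v+i/3)$, one estimates the weight ratio by means of $|v\mp i/3|\le|w\mp i/3|+r$ and the lower bound $|w\mp i/3|\gtrsim\kappa G^{-3}$ of the previous step: the factor attached to whichever singularity $w$ is near equals $1+r/|w\mp i/3|\le 1+(\kappa'-\kappa)/\kappa=\kappa'/\kappa$ once the common opening--angle factor in $r$ and in the bound $|w\mp i/3|\gtrsim\kappa G^{-3}$ is cancelled, whereas the factor attached to the other singularity is $1+\mathcal{O}(G^{-3})$, since a point within $r$ of $w$ cannot be close to both $i/3$ and $-i/3$ at once. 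Hence $|v^2+1/9|^\mu\big/|w^2+1/9|^\mu\le(\kappa'/\kappa)^\mu$ for $G$ large, and $1/r=G^3/(\kappa'-\kappa)$; taking the supremum over $v\in D_{\rho',\kappa',\delta'}$ and summing over $l$ yields the lemma (analyticity of $\partial_v h$ on the smaller domain is automatic). The main obstacle is exactly this geometric bookkeeping — verifying that for $G$ large the admissible disk radius is controlled solely by the $\kappa G^{-3}$--exclusion of $v=\pm i/3$, and that the opening--angle factors in $r$ and in $|w\mp i/3|$ indeed cancel so that the constant comes out to be precisely $(\kappa'/\kappa)^\mu$; the rest is the standard Cauchy estimate applied mode by mode.
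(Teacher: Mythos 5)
Your proof is correct and is exactly the standard Cauchy-estimate argument behind this lemma: the paper itself gives no proof, deferring to Lemmas 5.14 and 5.15 of \cite{Guardia2016}, where the bound is obtained in the same way (mode by mode, Cauchy's formula on a disk of radius $\mathcal{O}\left((\kappa'-\kappa)G^{-3}\right)$, and the weight ratio controlled by $\kappa'/\kappa$ near the dominant singularity). The only, harmless, imprecision is that the largest disk centred at $v\in D_{\rho',\kappa',\delta'}$ and contained in $D_{\rho,\kappa,\delta}$ has radius $(\kappa'-\kappa)G^{-3}\cos\beta_{1}$ rather than $(\kappa'-\kappa)G^{-3}$, so your argument delivers the stated estimate up to the fixed factor $1/\cos\beta_{1}$ (and a $1+\mathcal{O}(G^{-3})$ contribution from the far singularity), which is immaterial since $\beta_{1}$ is independent of $G$ and the lemma is only ever used up to constants.
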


\begin{lem}
\label{lem:Tech lemma composition}Fix constants $\rho'<\rho$, $\delta'<\delta,$
$\kappa'>\kappa+1$ and $\sigma'<\sigma$. Then,

i) If $h\in\mathcal{Y}_{\mu,\rho,\kappa,\delta,\sigma}$ and $g\in B\left(G^{-3}\right)\subset\mathcal{Y}_{\mu,\rho',\kappa',\delta',\sigma'}$
we have that $\tilde{h}=h\circ\left(\mathrm{Id}+g\right)\in\mathcal{Y}_{\mu,\rho',\kappa',\delta',\sigma'}$
and 
\[
\left\Vert \tilde{h}\right\Vert _{\mu}\leq\left(\frac{\kappa^{'}}{\kappa}\right)^{\mu}\left\Vert h\right\Vert _{\mu}.
\]

ii) Moreover if $g_{1},g_{2}\in B\left(G^{-3}\right)\subset\mathcal{Y}_{\mu,\rho',\kappa',\delta',\sigma'}$,
then $f=h\circ\left(\mathrm{Id}+g_{1}\right)-h\circ\left(\text{\ensuremath{\mathrm{Id}}}+g_{2}\right)$
satisfies 
\[
\left\Vert f\right\Vert _{\mu}\leq\frac{G^{3}}{\left(\kappa'-\kappa\right)}\left(\frac{\kappa'}{\kappa}\right)^{\mu}\left\Vert h\right\Vert _{\mu}\left\Vert g_{1}-g_{2}\right\Vert _{0,0}.
\]
\end{lem}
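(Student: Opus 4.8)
The strategy is to reduce both items to a single geometric fact: that the singular weight $|v^{2}+1/9|^{\mu}$ changes by at most the factor $(\kappa'/\kappa)^{\mu}$ under the substitution $v\mapsto v+g(v,\xi)$. Once this is available, $(i)$ follows at once and $(ii)$ is obtained from the fundamental theorem of calculus together with the derivative estimate of the preceding lemma. As a preliminary step, since $g\in B(G^{-3})$ and $\sigma'<\sigma$, I would first check that $\mathrm{Id}+g$ maps $D_{\rho',\kappa',\delta'}\times\mathbb{T}_{\sigma'}$ into $D_{\rho,\kappa,\delta}\times\mathbb{T}_{\sigma}$: the map leaves $\xi$ fixed, and by construction of the domains \eqref{eq:Domain D rho kappa delta} the boundary of $D_{\rho',\kappa',\delta'}$ lies at distance at least $(\kappa'-\kappa)G^{-3}\ge G^{-3}$ from $\partial D_{\rho,\kappa,\delta}$ near the singularities $v=\pm i/3$, and at distance bounded below by a positive constant (depending on $\rho-\rho'$ and $\delta-\delta'$) away from them, so a perturbation of modulus $\le G^{-3}$ cannot leave $D_{\rho,\kappa,\delta}$ once $G$ is large. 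In particular $\tilde h=h\circ(\mathrm{Id}+g)$ is well defined and real analytic on $D_{\rho',\kappa',\delta'}\times\mathbb{T}_{\sigma'}$.

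The core estimate is then
\[
\left|v^{2}+\tfrac{1}{9}\right|\ \le\ \frac{\kappa'}{\kappa}\,\left|w^{2}+\tfrac{1}{9}\right|,\qquad w:=v+g(v,\xi),\qquad v\in D_{\rho',\kappa',\delta'},
\]
valid for $G$ large. Writing $v^{2}+1/9=(v-i/3)(v+i/3)$, near $v=i/3$ one has $|v-i/3|\ge\kappa'G^{-3}$, hence $|w-i/3|\ge|v-i/3|-G^{-3}\ge\frac{\kappa'-1}{\kappa'}|v-i/3|\ge\frac{\kappa}{\kappa'}|v-i/3|$, the last inequality being exactly where the hypothesis $\kappa'>\kappa+1$ is used, while $|v+i/3|$ is bounded away from $0$ there so that $|w+i/3|\ge(1-o(1))|v+i/3|$. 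The region $v\approx-i/3$ is symmetric, and on the rest of $D_{\rho',\kappa',\delta'}$ both $|v\pm i/3|$ are $O(1)$ while $|w^{2}-v^{2}|\le CG^{-3}=o(1)\,|v^{2}+1/9|$, so $|w^{2}+1/9|\ge(1-o(1))|v^{2}+1/9|$. Combining the cases gives the displayed bound.

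For $(i)$, writing $\tilde h(v,\xi)=h(w,\xi)$ with $w\in D_{\rho,\kappa,\delta}$ and using the core estimate, $|(v^{2}+1/9)^{\mu}\tilde h(v,\xi)|\le(\kappa'/\kappa)^{\mu}|(w^{2}+1/9)^{\mu}h(w,\xi)|$; a routine mode-by-mode argument (Fourier expansion in $\xi$ together with the definition of $\|\cdot\|_{\mu}$) then yields $\|\tilde h\|_{\mu}\le(\kappa'/\kappa)^{\mu}\|h\|_{\mu}$. For $(ii)$ set $g_{t}=(1-t)g_{2}+tg_{1}\in B(G^{-3})$ and use
\[
h\circ(\mathrm{Id}+g_{1})-h\circ(\mathrm{Id}+g_{2})=\Bigl(\int_{0}^{1}\partial_{v}h\bigl(v+g_{t}(v,\xi),\xi\bigr)\,\mathrm{d}t\Bigr)\,(g_{1}-g_{2})(v,\xi).
\]
Applying the preceding (derivative) lemma to pass from $h$ to $\partial_{v}h$ on an intermediate domain with parameter $\kappa''$ chosen so that $\kappa<\kappa''<\kappa'-1$, then part $(i)$ to estimate $\partial_{v}h\circ(\mathrm{Id}+g_{t})$, and bounding the last factor by $\|g_{1}-g_{2}\|_{0,0}$, one obtains a bound of the asserted form $\frac{G^{3}}{\kappa'-\kappa}(\kappa'/\kappa)^{\mu}\|h\|_{\mu}\|g_{1}-g_{2}\|_{0,0}$, the factors $(\kappa''/\kappa)^{\mu}$ and $(\kappa'/\kappa'')^{\mu}$ telescoping to $(\kappa'/\kappa)^{\mu}$ and the prefactor $G^{3}/(\kappa'-\kappa)$ coming from the derivative estimate.

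The only genuinely delicate point is the bookkeeping in the second paragraph: the hypothesis $\kappa'>\kappa+1$ is precisely what guarantees that a displacement of size $G^{-3}$ — comparable to the collar width $(\kappa'-\kappa)G^{-3}$ near the singularities — distorts the singular weights by no more than the clean factor $\kappa'/\kappa$, and in $(ii)$ one must split this budget through the auxiliary parameter $\kappa''$ so as not to pick up $(\kappa'/\kappa)^{2\mu}$ when composing the derivative bound with part $(i)$. The $\xi$-analyticity is handled by the usual Cauchy estimates on the thin strip $\sigma'<\sigma$; otherwise everything is the triangle inequality and the fundamental theorem of calculus, as in Lemmas 5.14 and 5.15 of \cite{Guardia2016}.
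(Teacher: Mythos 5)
The paper offers no proof of this lemma, deferring to Lemmas 5.14 and 5.15 of \cite{Guardia2016}, and your argument is precisely the standard one used there: verify that $\mathrm{Id}+g$ maps the reduced domain into the original one, compare the weights $\left|v^{2}+1/9\right|^{\mu}$ before and after the shift by $g$ (the hypothesis $\kappa'>\kappa+1$ entering exactly where you place it, to absorb a displacement of size $G^{-3}$ into the collar of width $(\kappa'-\kappa)G^{-3}$), and deduce (ii) from the fundamental theorem of calculus together with the derivative lemma and part (i) on an intermediate domain. The only blemish is that your route to (ii) yields the prefactor $G^{3}/(\kappa''-\kappa)$ with $\kappa<\kappa''<\kappa'-1$ rather than the stated $G^{3}/(\kappa'-\kappa)$; since every application in the paper absorbs such factors into an unspecified constant $K$, this is harmless.
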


\begin{thm}
\label{thm:Existence change of variables} Let $\delta,\kappa$ and
$\sigma$ be the constants given by Theorem \ref{thm:Existence infinity manifold close to infinity}.
Let $\rho_{1}<\rho,$ $\delta_{1}<\delta,$ $\sigma_{1}<\sigma$ and
$\kappa_{1}>\kappa.$ Then, for $G$ big enough, there exist a function
$g\in\mathcal{Y}_{0,\rho_{1},\kappa_{1},\delta_{1},\sigma_{1}}$ satisfying
\[
\left\Vert g\right\Vert _{0}\leq b_{1}G^{-7/2}
\]
for $b_{1}>0$ independent of $G$ and such that 
\[
\hat{\varGamma}=\varGamma\circ\left(\mathrm{Id}+g\right)
\]
satisfies \eqref{eq: flow param equation}. 
\end{thm}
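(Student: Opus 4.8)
The plan is to recast equation \eqref{eq:Fixed point for g} as a fixed point problem for $g$ in the ball $B\left(b_1 G^{-7/2}\right)\subset\mathcal{Y}_{0,\rho_1,\kappa_1,\delta_1,\sigma_1}$ and apply the contraction mapping principle. First I would estimate the size of the unperturbed iterate $\tilde{\mathcal{G}}\circ\mathcal{F}\left(\mathrm{Id}\right)$, where $\mathcal{F}=\tilde{y}_h^{-2}\partial_v T_1^u$. From Theorem \ref{thm:Existence infinity manifold close to infinity} we have $\left\llbracket T_1^u\right\rrbracket_{1,1/2}\leq b_0 G^{-4}$, hence $\left\Vert\partial_v T_1^u\right\Vert_{2,3/2}\leq b_0 G^{-4}$ on $D_{\kappa,\delta}^{\infty}$, and therefore $\left\Vert\partial_v T_1^u\right\Vert_{3/2}\leq KG^{3/2}\cdot b_0 G^{-4}=Kb_0 G^{-5/2}$ when we restrict to the bounded domain $D_{\rho,\kappa,\delta}$ (losing a power $G^{3\alpha}$ with $\alpha=1/2$ by going from $\mu=3/2$ to $\mu=1$ as in Lemma \ref{lem:Algebra-properties-for} iii)), and also transfer the $v$-weight to the $\left(v^2+1/9\right)$-weight appropriate to $\mathcal{Y}$. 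Dividing by $\tilde{y}_h^2$, which by property \eqref{itm:Homoclinic Singul} of Section \ref{sec:The-integrable-system} behaves like $\left(v\mp i/3\right)$ near the singularities, costs one more unit in the $\mu$-index, so $\mathcal{F}\left(\mathrm{Id}\right)\in\mathcal{Y}_{5/2,\ldots}$ with norm bounded by $KG^{-5/2}$; applying $\tilde{\mathcal{G}}$ via Lemma \ref{lem:Inverse operator lemma 2} ii) then gives $\left\Vert\tilde{\mathcal{G}}\circ\mathcal{F}\left(\mathrm{Id}\right)\right\Vert_{0}\leq \tfrac{b_1}{2}G^{-5/2}$, which is comfortably smaller than $b_1 G^{-7/2}$ once $G$ is large, so the ball is mapped into itself provided we also control the nonlinear correction.

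Next I would establish contractivity. For $g_1,g_2\in B\left(b_1 G^{-7/2}\right)\subset B\left(G^{-3}\right)$, the difference $\tilde{\mathcal{G}}\circ\mathcal{F}\circ\left(\mathrm{Id}+g_1\right)-\tilde{\mathcal{G}}\circ\mathcal{F}\circ\left(\mathrm{Id}+g_2\right)$ is handled by Lemma \ref{lem:Tech lemma composition} ii), which yields a Lipschitz bound of the form $\tfrac{G^3}{\kappa_1-\kappa}\left(\tfrac{\kappa_1}{\kappa}\right)^\mu\left\Vert\mathcal{F}\right\Vert_\mu\left\Vert g_1-g_2\right\Vert_{0}$ followed by Lemma \ref{lem:Inverse operator lemma 2}; since $\left\Vert\mathcal{F}\right\Vert_\mu=\mathcal{O}\left(G^{-5/2}\right)$, the net Lipschitz constant is $\mathcal{O}\left(G^{3}\cdot G^{-5/2}\right)=\mathcal{O}\left(G^{1/2}\right)$ — which is \emph{not} small. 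This is the point where care is needed: one has to exploit the fact that $\mathcal{F}$ composed with a translation of size $\mathcal{O}\left(G^{-7/2}\right)$ changes by $\mathcal{O}\left(G^{-7/2}\right)\cdot\left\Vert\partial_v\mathcal{F}\right\Vert$, and $\partial_v$ on the bounded piece of the domain costs another factor $G^3$ (the derivative lemma), giving a variation of order $G^{-7/2}\cdot G^3\cdot G^{-5/2}=G^{-3}$ relative to the baseline — so the effective Lipschitz constant is $\mathcal{O}\left(G^{-1/2}\right)<1$ for $G$ large. The bookkeeping of which weight indices $\mu$ are used where, and the interplay between the losses $G^{3\alpha}$ from Lemma \ref{lem:Algebra-properties-for} iii) and the gains from $\tilde{\mathcal{G}}$, is the main obstacle: one must arrange the chain of inequalities so that the final power of $G$ is genuinely negative, and this forces the specific choice of exponent $7/2$ in the statement.

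Once the map $g\mapsto\tilde{\mathcal{G}}\circ\mathcal{F}\circ\left(\mathrm{Id}+g\right)$ is shown to be a contraction of the closed ball $B\left(b_1 G^{-7/2}\right)$ into itself, the Banach fixed point theorem gives a unique $g\in\mathcal{Y}_{0,\rho_1,\kappa_1,\delta_1,\sigma_1}$ with $\left\Vert g\right\Vert_0\leq b_1 G^{-7/2}$. Real analyticity of $g$ is inherited from that of $V$, $\tilde{r}_h$, $\tilde{y}_h$ and $T_1^u$ together with the fact that $\tilde{\mathcal{G}}$ preserves analyticity (its kernel is entire and the integration contours stay inside the domain). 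Finally, unwinding the construction: since $g$ solves \eqref{eq:Fixed point for g}, it solves \eqref{eq:change of param eq}, hence \eqref{eq: flow param equation}, so $\hat{\varGamma}=\varGamma\circ\left(\mathrm{Id}+g\right)$ satisfies $X\circ\hat{\varGamma}=\mathcal{L}\left(\hat\varGamma\right)$, i.e. $\hat\varGamma$ is a parametrization of the unstable manifold that is invariant under the flow and, crucially, regular at $v=0$ because the singular prefactor $\tilde{y}_h^{-1}$ no longer appears in $\hat\varGamma$ itself — the composition with $\mathrm{Id}+g$ reabsorbs it. This sets up the analytic continuation of $T_1^u$ across the imaginary axis carried out in the following subsection.
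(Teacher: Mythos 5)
Your overall strategy (contraction of $g\mapsto\tilde{\mathcal{G}}\circ\mathcal{F}\circ(\mathrm{Id}+g)$ on a ball of radius $b_1G^{-7/2}$, using Lemma \ref{lem:Tech lemma composition} for the compositions and Lemma \ref{lem:Inverse operator lemma 2} for $\tilde{\mathcal{G}}$) is exactly the paper's, but your quantitative bookkeeping is wrong in a way that breaks both halves of the argument. The key estimate should be $\left\Vert \mathcal{F}\right\Vert_{1/2}\leq KG^{-4}$, not $\left\Vert\mathcal{F}\right\Vert_{5/2}\leq KG^{-5/2}$. Two errors produce your weaker bound. First, restricting $\left\Vert\partial_v T_1^u\right\Vert_{2,3/2}\leq b_0G^{-4}$ to $D_{\rho,\kappa,\delta}$ costs nothing: the second summand in the definition of $\left\Vert\cdot\right\Vert_{\nu,\mu}$ \emph{is} the $\mathcal{Y}$-norm with the same index $\mu=3/2$, so there is no $G^{3/2}$ loss from Lemma \ref{lem:Algebra-properties-for}\,iii). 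Second, since $\tilde y_h=\mathrm{d}\tilde r_h/\mathrm{d}v$ and $\tilde r_h\sim C(v\mp i/3)^{1/2}$, one has $\tilde y_h\sim\frac{C}{2}(v\mp i/3)^{-1/2}$ near the singularities (the displayed exponent in item \eqref{itm:Homoclinic Singul} is a typo), so $1/\tilde y_h^2\sim(v\mp i/3)$ \emph{vanishes} there and lowers the singular weight by one unit rather than raising it. Hence $\mathcal{F}=\tilde y_h^{-2}\partial_vT_1^u$ lands in weight $3/2-1=1/2$ with norm $KG^{-4}$; the exponent $7/2$ then comes from $\tilde{\mathcal{G}}$ gaining one unit of weight and a single application of Lemma \ref{lem:Algebra-properties-for}\,iii) with $\alpha=1/6$ costing $G^{1/2}$.

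With your numbers the proof cannot close. Your claim that $\frac{b_1}{2}G^{-5/2}$ is ``comfortably smaller than $b_1G^{-7/2}$ for large $G$'' is backwards: $G^{-5/2}\gg G^{-7/2}$, so the ball is not mapped into itself. Likewise your contraction repair does not work: the variation you compute, $G^{-7/2}\cdot G^{3}\cdot G^{-5/2}=G^{-3}$, divided by $\left\Vert g_1-g_2\right\Vert_0=\mathcal{O}(G^{-7/2})$, gives a Lipschitz constant $\mathcal{O}(G^{1/2})$ — the same non-small constant you started with, not $\mathcal{O}(G^{-1/2})$. Once $\left\Vert\mathcal{F}\right\Vert_{1/2}\leq KG^{-4}$ is in place, Lemma \ref{lem:Tech lemma composition}\,ii) immediately yields a Lipschitz constant of order $G^{3}\cdot G^{-4}=G^{-1}$ (before even invoking $\tilde{\mathcal{G}}$), and both the self-mapping and the contraction follow for $G$ large. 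The remaining parts of your write-up (analyticity, unwinding \eqref{eq:Fixed point for g} back to \eqref{eq: flow param equation}) are fine.
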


\begin{proof}
To find $g$ we solve the fixed point equation \eqref{eq:Fixed point for g}. For that, we take $g\in B\left(KG^{-7/2}\right)\subset\mathcal{Y}_{0,\rho_{1},\kappa_{1},\delta_{1},\sigma_{1}}$,
with $K$ a constant independent of $G.$ Then by Lemma \ref{lem:Tech lemma composition}
and using the estimate for $\partial_{v}T_{1}$ obtained in Theorem
\ref{thm:Existence infinity manifold close to infinity} we have 
\begin{eqnarray*}
\left\Vert \mathcal{F}\circ\left(\mathrm{Id}+g\right)\right\Vert _{1/2} & \leq & \left(\frac{\kappa_{1}}{\kappa}\right)^{1/2}\left\Vert \mathcal{F}\right\Vert _{1/2}\\
 & \leq & \left(\frac{\kappa_{1}}{\kappa}\right)^{1/2}KG^{-4}\\
 & \leq & KG^{-4}
\end{eqnarray*}
where $K$ is a constant depending only on the reduction of the domain.
From here it is clear using Lemma \ref{lem:Inverse operator lemma 2}
that the map $\tilde{\mathcal{G}}\circ\mathcal{F}\circ\left(\text{Id}+g\right):B\left(KG^{-7/2}\right)\subset\mathcal{Y}_{0,\rho_{1},\kappa_{1},\delta_{1},\sigma_{1}}\rightarrow\mathcal{Y}_{0,\rho_{1},\kappa_{1},\delta_{1},\sigma_{1}}$
is well defined. Moreover, we obtain that 
\begin{equation}
\begin{split}
\left\Vert \tilde{\mathcal{G}}\circ\mathcal{F}\circ\left(\text{Id}+g\right)_{|g=0}\right\Vert _{0}&\leq KG^{1/2}\left\Vert \tilde{\mathcal{G}}\circ\mathcal{F}\circ\left(\text{Id}+g\right)_{|g=0}\right\Vert _{1/6}\\
&\leq KG^{1/2}\left\Vert \mathcal{F}\circ\left(\text{Id}+g\right)_{|g=0}\right\Vert _{7/6}\\
&\leq KG^{1/2}\left\Vert \mathcal{F}\circ\left(\text{Id}+g\right)_{|g=0}\right\Vert _{1/2}\\
&\leq b_{1}G^{-7/2}
\end{split}
\end{equation}
for some $b_{1}$ independent of $G.$ It only remains to show that
the map $\tilde{\mathcal{G}}\circ\mathcal{F}\circ\left(\text{Id}+g\right)$
is contractive in a neighbourhood of the origin. Take $g_{1},g_{2}\in B\left(b_{1}G^{-7/2}\right)\subset\mathcal{Y}_{0,\rho_{1},\kappa_{1},\delta_{1},\sigma_{1}}$,
using again Lemma \ref{lem:Tech lemma composition} we have that 
\[
\left\Vert \mathcal{F}\circ\left(\text{Id}+g_{1}\right)-\mathcal{F}\circ\left(\text{Id}+g_{2}\right)\right\Vert _{1/2}\leq\tilde{K}G^{-1/2}\left\Vert g_{1}-g_{2}\right\Vert _{0}.
\]
Direct application of Lemma \ref{lem:Inverse operator lemma} yields
\[
\left\Vert \mathcal{\tilde{G}}\left(\mathcal{F}\circ\left(\text{Id}+g_{1}\right)-\mathcal{F}\circ\left(\text{Id}+g_{2}\right)\right)\right\Vert _{0}\leq\tilde{K}G^{-1/2}\left\Vert g_{1}-g_{2}\right\Vert _{0},
\]
so for $G$ big enough the map $g\mapsto\tilde{\mathcal{G}}\circ\mathcal{F}\circ\left(\text{Id}+g\right)$
is contractive on $B\left(b_{1}G^{-7/2}\right)\subset\mathcal{Y}_{0,\rho_{1},\kappa_{1},\delta_{1},\sigma_{1}}$
and the proof is completed. 
\end{proof}

\subsubsection{Analytic extension of the unstable manifold by the flow parametrization}

Now we perform the analytic continuation of the parametrization \eqref{eq:gammahat}
given by Theorem \ref{thm:Existence infinity manifold close to infinity}
to the domain $D_{\kappa,\delta}^{\mathrm{flow}}$ defined in \eqref{eq:flow domain}
using the flow of the Hamiltonian \eqref{eq:rescaled_hamiltonian}.
Notice that since the domain $D_{\kappa,\delta}^{\mathrm{flow}}$ is bounded and at distance of order
$\mathcal{O}\left(1\right)$ with respect to the singularities all
norms $\left\Vert h\right\Vert _{\mu}$ are equivalent, therefore
it will suffice to get estimates on the norm $\left\Vert h\right\Vert _{0}$.

Write $\hat{\varGamma}=\hat{\varGamma}_{0}+\hat{\varGamma}_{1}$, where 
\begin{equation}
\hat{\varGamma}_{0}\left(v,\xi\right)=\varGamma_{0}\circ\left(\mathrm{Id}+g\right)\left(v,\xi\right)\qquad\varGamma_{0}\left(v\right)=\left(\tilde{r}_h\left(v\right),\tilde{y}_h\left(v\right)\right).
\end{equation}
Then, the equation \eqref{eq: flow param equation} that defines this extension
is rewritten as
\begin{equation}
\mathcal{\hat{L}}\left(\hat{\varGamma}_{1}\right)=\mathcal{\hat{F}}\left(\hat{\varGamma}_{1}\right)\label{eq:flow eq 2}
\end{equation}
where 
\[
\begin{split}\hat{\mathcal{L}}\left(h\right)= & \mathcal{L}\left(h\right)-DX_{0}\left(\hat{\varGamma}_{0}\right)h\\
\mathcal{\hat{F}}\left(h\right)= & X_{0}\left(\hat{\varGamma}_{0}+h\right)-X_{0}\left(\hat{\varGamma}_{0}\right)-DX_{0}\left(\hat{\varGamma}_{0}\right)h+X_{1}\left(\hat{\varGamma}_{0}+h\right).
\end{split}
\]
Denote by $\varPsi\left(v\right)$ the fundamental matrix of the linear
system 
\[
\dot{z}\left(v\right)=DX_{0}\left(\varGamma_{0}\left(v,\xi\right)\right)z\left(v\right),\qquad v\in D_{\kappa,\delta}^{\mathrm{flow}}.
\]
Then, equation \eqref{eq:flow eq 2}, together with a suitable initial condition $\hat{\varGamma}_{h}$,  can be reformulated as the fixed point equation 
\begin{equation}
\hat{\varGamma}_{1}=\hat{\varGamma}_{h}+\hat{\mathcal{G}}\circ\hat{\mathcal{F}}\left(\hat{\varGamma}_{1}\right),\label{eq: Fixed point equation Sect 5.2.2}
\end{equation}
where 
\begin{eqnarray*}
\hat{\varGamma}_{h} & = & \sum_{l>0}\varPsi\left(v\right)\varPsi^{-1}\left(v_{1}\right)\hat{\varGamma}_{1}^{[l]}\left(v_{1}\right)e^{ilG^{3}\left(v_{1}-v\right)}e^{il\xi}\\
 &  & +\sum_{l<0}\varPsi\left(v\right)\varPsi^{-1}\left(\bar{v}_{1}\right)\hat{\varGamma}_{1}^{[l]}\left(\bar{v}_{1}\right)e^{ilG^{3}\left(\bar{v}_{1}-v\right)}e^{il\xi}\\
 &  & +\varPsi\left(v\right)\varPsi^{-1}\left(-\rho_{1}\right)\hat{\varGamma}_{1}^{[0]}\left(-\rho_{1}\right)
\end{eqnarray*}
is the solution of the homogeneous equation $\hat{\mathcal{L}}\left(h\right)=0$ (observe that since $v_1,\bar{v}_1,-\rho_1$ are contained in $D_{\rho,\kappa,\delta}$, the terms $\hat{\varGamma}_{1}^{[l]}\left(v_{1}\right), \hat{\varGamma}_{1}^{[l]}\left(\bar{v}_{1}\right)$ and $\hat{\varGamma}_{1}^{[l]}\left(-\rho_{1}\right)$ are already defined)
and 
\[
\hat{\mathcal{G}}\left(h\right)=\varPsi\tilde{\mathcal{G}}\left(\varPsi^{-1}h\right)
\]
is a right inverse operator. Notice that since $DX\left(\hat{\varGamma}_{0}\left(v,\xi\right)\right)$
is continuous and $D_{\kappa,\delta}^{\mathrm{flow}}$ is a compact
domain at distance $\mathcal{O}\left(1\right)$ from the singularities,
we have that there exists $K>0$ such that 
\begin{equation}
\sup_{v\in D_{\kappa,\delta}^{\text{flow}}}\max\left\{ \left\Vert \varPsi\right\Vert _{0},\left\Vert \varPsi^{-1}\right\Vert _{0}\right\} \leq K,\label{eq:bound fundamental matrix}
\end{equation}
in the matrix norm associated to the usual vector norm in $\mathbb{C}^{2}.$
\begin{lem}
\label{lem:Technical lemma for analytic extension}Assume $h,\tilde{h}\in B\left(KG^{-4}\right)\subset\mathcal{Y}_{0,\kappa_{1},\delta_{1},\sigma_{1}}^{\mathrm{flow}}$
for some $K>0.$ Then there exists $K^{'}>0$ such that

i) Defining $Y\left(h\right)=X_{0}\left(\hat{\varGamma}_{0}+h\right)-X_{0}\left(\hat{\varGamma}_{0}\right)-DX_{0}\left(\hat{\varGamma}_{0}\right)h$
we have that $Y\left(h\right)\in\mathcal{Y}_{0,\kappa_{1},\delta_{1},\sigma_{1}}^{\mathrm{flow}}.$
and 
\[
\left\Vert Y\left(h\right)\right\Vert _{0}\leq K^{'}G^{-4},
\]

ii) $\ X_{1}\left(\hat{\varGamma}_{0}+h\right)\in\mathcal{Y}_{0,\kappa_{1},\delta_{1},\sigma_{1}}^{\mathrm{flow}}.$
with $\left\Vert X_{1}\left(\hat{\varGamma}_{0}+h\right)\right\Vert _{0}\leq K^{'}G^{-4},$

iii)$\ \left\Vert Y\left(h\right)-Y\left(\tilde{h}\right)\right\Vert _{0}\leq K^{'}G^{-4}\left\Vert h-\tilde{h}\right\Vert _{0},$

iv)$\ \left\Vert X_{1}\left(\hat{\varGamma}_{0}+h\right)-X_{1}\left(\hat{\varGamma}_{0}+\tilde{h}\right)\right\Vert _{0}\leq K^{'}G^{-4}\left\Vert h-\tilde{h}\right\Vert _{0}.$ 
\end{lem}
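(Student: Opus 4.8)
The plan is to exploit that all four assertions are local statements about the smooth vector fields $X_0$ and $X_1$ evaluated near the curve $\hat\varGamma_0$, combined with the fact that the domain $D_{\kappa,\delta}^{\mathrm{flow}}$ is compact and sits at distance $\mathcal O(1)$ from the singularities $v=\pm i/3$. Concretely, I would first record that on $D_{\kappa,\delta}^{\mathrm{flow}}$ the unperturbed homoclinic data $\tilde r_h(v),\tilde y_h(v)$ are bounded and bounded away from the collision $\tilde r=0$, so $\hat\varGamma_0=\varGamma_0\circ(\mathrm{Id}+g)$ stays in a fixed compact region $\mathcal K\subset\mathbb R_+\times\mathbb R$ on which $X_0$ is analytic (here one uses Theorem \ref{thm:Existence change of variables} to know $g\in B(b_1G^{-7/2})$, hence $\mathrm{Id}+g$ maps $D_{\kappa,\delta}^{\mathrm{flow}}$ into a slightly enlarged domain still at distance $\mathcal O(1)$ from the singularities). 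Since $h,\tilde h\in B(KG^{-4})$, for $G$ large $\hat\varGamma_0+h$ and $\hat\varGamma_0+\tilde h$ remain in a fixed compact neighbourhood $\mathcal K'\supset\mathcal K$ on which $X_0,X_1$ together with their first and second derivatives are uniformly bounded.

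For part (i), I would write $Y(h)=X_0(\hat\varGamma_0+h)-X_0(\hat\varGamma_0)-DX_0(\hat\varGamma_0)h$ as the Taylor remainder
\[
Y(h)(v,\xi)=\int_0^1(1-\tau)\,D^2X_0\bigl(\hat\varGamma_0(v,\xi)+\tau h(v,\xi)\bigr)[h(v,\xi),h(v,\xi)]\,\mathrm d\tau,
\]
so that $\|Y(h)\|_0\le \tfrac12\sup_{\mathcal K'}\|D^2X_0\|\,\|h\|_0^2\le K'G^{-8}\le K'G^{-4}$; the equivalence of all norms $\|\cdot\|_\mu$ on this bounded domain (noted just before the lemma) lets me phrase everything in terms of $\|\cdot\|_0$. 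Part (iii) is the same computation applied to the difference $Y(h)-Y(\tilde h)$: expressing it via the fundamental theorem of calculus along the segment joining $h$ to $\tilde h$ and again via $D^2X_0$, one gets a factor $\|h-\tilde h\|_0$ times $\mathcal O(G^{-4})$ (in fact $\mathcal O(G^{-4})$ is wasteful — the true bound is $\mathcal O(G^{-4})$ only because $\|h\|_0,\|\tilde h\|_0=\mathcal O(G^{-4})$, which is exactly what appears). For part (ii) the key point is not smallness of $h$ but smallness of the perturbation itself: $X_1$ is the vector field generated by $U(\tilde r,G^3 s)$, and by \eqref{eq:potential_main_order_term} and Lemma \ref{lem: Potential estimates} its components (and their derivatives in $\tilde r$) are $\mathcal O(G^{-4})$ uniformly on the compact region $\mathcal K'$; evaluating at $\hat\varGamma_0+h$ with $\hat\varGamma_0+h\in\mathcal K'$ preserves this bound, giving $\|X_1(\hat\varGamma_0+h)\|_0\le K'G^{-4}$. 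Part (iv) follows by the mean value inequality: $\|X_1(\hat\varGamma_0+h)-X_1(\hat\varGamma_0+\tilde h)\|_0\le\sup_{\mathcal K'}\|DX_1\|\,\|h-\tilde h\|_0$, and $\|DX_1\|=\mathcal O(G^{-4})$ on $\mathcal K'$ by the same potential estimates.

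The only genuine care needed — and the step I expect to be the mildly delicate one — is bookkeeping the domains: one must check that after composing with $\mathrm{Id}+g$ and then perturbing by $h\in B(KG^{-4})$, the image still lies inside a $G$-independent compact set that is analytic for $X_0$ and at distance $\mathcal O(1)$ from $\{\tilde r=0\}$ and from $v=\pm i/3$, so that the suprema of $\|D^2X_0\|$, $\|X_1\|$, $\|DX_1\|$ are finite and $G$-independent; this is why the hypotheses are stated on $D_{\kappa,\delta}^{\mathrm{flow}}$ (bounded, away from singularities) rather than on the unbounded domains. Everything else is Taylor's theorem with integral remainder plus the estimate $\|U\|=\mathcal O(G^{-4})$ already available, and the uniform bound \eqref{eq:bound fundamental matrix} on $\varPsi,\varPsi^{-1}$ is not even needed for this lemma (it enters only afterward, when solving the fixed point equation \eqref{eq: Fixed point equation Sect 5.2.2}).
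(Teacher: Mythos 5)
Your proposal is correct and follows essentially the same route as the paper, whose proof is a two-line sketch invoking the mean value theorem together with the uniform bounds $\left\Vert DX_{0}\left(\hat{\varGamma}_{0}\right)\right\Vert _{0}\leq K'$, $\left\Vert X_{1}\left(\hat{\varGamma}_{0}\right)\right\Vert _{0}\leq K'G^{-4}$ and $\left\Vert DX_{1}\left(\hat{\varGamma}_{0}\right)\right\Vert _{0}\leq K'G^{-4}$ on the compact domain $D_{\kappa,\delta}^{\mathrm{flow}}$. Your version merely makes explicit what the paper leaves implicit (the second-order Taylor remainder with $D^{2}X_{0}$ for items (i) and (iii), and the domain bookkeeping ensuring $\hat{\varGamma}_{0}+h$ stays in a fixed compact set), so it is a faithful, slightly more detailed rendering of the same argument.
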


\begin{proof}
The proof follows from the mean value theorem together with the straightforward
bounds

\[
\left\Vert DX_{0}\left(\hat{\varGamma}_{0}\right)\right\Vert _{0}\leq K^{'}\quad\left\Vert X_{1}\left(\hat{\varGamma}_{0}\right)\right\Vert _{0}\leq K^{'}G^{-4}\quad\left\Vert DX_{1}\left(\hat{\varGamma}_{0}\right)\right\Vert _{0}\leq K^{'}G^{-4}.
\]
\end{proof}
\begin{prop}
\label{prop:Analytic continuation existence}Let $\kappa_{1},\ \delta_{1}$
and $\sigma_{1}$ be the constants considered in Theorem \ref{thm:Existence change of variables}.
Then, there exists $b_{2}>0$ such that if $G$ is large enough, the
fixed point equation \eqref{eq: Fixed point equation Sect 5.2.2}
has a unique solution $\hat{\varGamma}_{1}\in B\left(b_{2}G^{-4}\right)\subset\mathcal{Y}_{0,\kappa_{1},\delta_{1},\sigma_{1}}^{\mathrm{flow}}.$ 
\end{prop}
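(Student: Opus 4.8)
The plan is to solve the fixed point equation \eqref{eq: Fixed point equation Sect 5.2.2} by the standard contraction-mapping argument in the ball $B\left(b_{2}G^{-4}\right)\subset\mathcal{Y}_{0,\kappa_{1},\delta_{1},\sigma_{1}}^{\mathrm{flow}}$, exactly as was done for Theorems \ref{thm:Existence infinity manifold close to infinity} and \ref{thm:Existence change of variables}. First I would check that the operator $\mathcal{N}(h)=\hat{\varGamma}_{h}+\hat{\mathcal{G}}\circ\hat{\mathcal{F}}(h)$ maps the ball into itself. For the image of $h=0$ one uses that $\hat{\varGamma}_{h}$ is built from the boundary values $\hat{\varGamma}_{1}^{[l]}(v_{1}),\hat{\varGamma}_{1}^{[l]}(\bar v_{1}),\hat{\varGamma}_{1}^{[0]}(-\rho_{1})$, which lie in $D_{\rho,\kappa,\delta}$ and hence are controlled by Theorems \ref{thm:Existence infinity manifold close to infinity} and \ref{thm:Existence change of variables} (they are $\mathcal{O}(G^{-4})$ in the relevant norms), together with the uniform bound \eqref{eq:bound fundamental matrix} on $\varPsi,\varPsi^{-1}$; the oscillatory factors $e^{ilG^{3}(v_{1}-v)}$ only help since we move into $D_{\kappa,\delta}^{\mathrm{flow}}$ in the direction where they decay. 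For $\hat{\mathcal{G}}\circ\hat{\mathcal{F}}(0)$ one combines \eqref{eq:bound fundamental matrix}, the bound on $\tilde{\mathcal{G}}$ from Lemma \ref{lem:Inverse operator lemma 2}, and parts (i)--(ii) of Lemma \ref{lem:Technical lemma for analytic extension}, which give $\|\hat{\mathcal{F}}(0)\|_{0}\leq K'G^{-4}$; since on the bounded domain $D_{\kappa,\delta}^{\mathrm{flow}}$ all norms $\|\cdot\|_{\mu}$ are equivalent, no powers of $G$ are lost in applying $\tilde{\mathcal{G}}$, so $\|\mathcal{N}(0)\|_{0}\leq \tfrac{b_{2}}{2}G^{-4}$ for a suitable $b_{2}$.

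Next I would establish contractivity. Writing $\mathcal{N}(h)-\mathcal{N}(\tilde h)=\hat{\mathcal{G}}(\hat{\mathcal{F}}(h)-\hat{\mathcal{F}}(\tilde h))$, the Lipschitz estimates (iii)--(iv) of Lemma \ref{lem:Technical lemma for analytic extension} give $\|\hat{\mathcal{F}}(h)-\hat{\mathcal{F}}(\tilde h)\|_{0}\leq K'G^{-4}\|h-\tilde h\|_{0}$, and applying $\hat{\mathcal{G}}=\varPsi\tilde{\mathcal{G}}(\varPsi^{-1}\,\cdot\,)$ together with \eqref{eq:bound fundamental matrix} and Lemma \ref{lem:Inverse operator lemma 2} keeps this bound up to a constant. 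Hence $\|\mathcal{N}(h)-\mathcal{N}(\tilde h)\|_{0}\leq KG^{-4}\|h-\tilde h\|_{0}<\tfrac12\|h-\tilde h\|_{0}$ for $G$ large. The contraction mapping theorem then yields a unique fixed point $\hat{\varGamma}_{1}\in B(b_{2}G^{-4})$, and since the Banach space consists of real-analytic functions, $\hat{\varGamma}_{1}$ is real-analytic on $D_{\kappa,\delta}^{\mathrm{flow}}\times\mathbb{T}_{\sigma_{1}}$; as it solves \eqref{eq:flow eq 2}, $\hat{\varGamma}=\hat{\varGamma}_{0}+\hat{\varGamma}_{1}$ satisfies \eqref{eq: flow param equation} and therefore is the desired flow-invariant parametrization extending the one from Theorem \ref{thm:Existence change of variables}.

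The one point requiring a little care is verifying that $\mathcal{N}$ genuinely maps into the Banach space associated to $D_{\kappa,\delta}^{\mathrm{flow}}$ rather than merely a formal solution: one must check that $\hat{\varGamma}_{h}$ is well defined on the whole flow domain (i.e.\ that the flow of $X$ carries $D_{\rho,\kappa,\delta}$ onto $D_{\kappa,\delta}^{\mathrm{flow}}$ in finite complex time, so that $\varPsi(v)$ exists throughout), and that the series over Fourier modes $l$ converges in $\mathcal{Y}_{0,\kappa_{1},\delta_{1},\sigma_{1}}^{\mathrm{flow}}$. The latter is where the exponential weight $e^{|l|\sigma_{1}}$ implicit in the norm on $\mathbb{T}_{\sigma_{1}}$ competes with the factors $e^{ilG^{3}(v_{1}-v)}$; since $v\in D_{\kappa,\delta}^{\mathrm{flow}}$ lies on the side of $v_{1},\bar v_{1}$ where $\mathrm{Im}(l(v_{1}-v))<0$ for the matching sign of $l$, these factors are bounded by $1$ and the series converges because the coefficients $\hat{\varGamma}_{1}^{[l]}(v_{1})$ already decay from membership of $T_{1}^{u}$ (and $g$) in their respective spaces.

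I expect the main obstacle to be none of the estimates individually — they are all routine adaptations of Lemmas 5.5, 5.14, 5.15 of \cite{Guardia2010, Guardia2016} — but rather the bookkeeping that the extension by the flow is consistent with the previously constructed parametrization on the overlap $D_{\kappa,\delta}^{\mathrm{flow}}\cap D_{\rho,\kappa,\delta}$ and that no loss of analyticity occurs when passing through the region near $v=0$ where the Hamilton--Jacobi parametrization \eqref{eq:H-J parametrization} degenerates; this is precisely why one works with the flow parametrization $\hat{\varGamma}$ here, and the compactness of $D_{\kappa,\delta}^{\mathrm{flow}}$ together with the uniform bound \eqref{eq:bound fundamental matrix} on the fundamental matrix is what makes it go through cleanly.
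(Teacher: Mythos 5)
Your proposal is correct and follows essentially the same route as the paper: the authors also bound $\hat{\varGamma}_{h}$ using the fact that $v_{1},\bar{v}_{1},-\rho_{1}$ lie in $D_{\rho,\kappa,\delta}$ where the previous theorems apply, estimate $\hat{\mathcal{K}}(0)=\hat{\varGamma}_{h}+\hat{\mathcal{G}}\left(X_{1}\circ\hat{\varGamma}_{0}\right)$ by $\tfrac{b_{2}}{2}G^{-4}$ via \eqref{eq:bound fundamental matrix} and Lemma \ref{lem:Technical lemma for analytic extension}, and obtain a Lipschitz constant $KG^{-4}$ from parts (iii)--(iv) of that lemma before invoking the contraction mapping theorem. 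Your additional remarks on the convergence of the Fourier series defining $\hat{\varGamma}_{h}$ and on the equivalence of norms on the bounded domain are consistent with (and slightly more explicit than) the paper's argument.
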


\begin{proof}
As $v_{1},\bar{v}_{1},\rho_{1}\in D_{\rho_{1},\kappa_{1},\delta_{1}}$
we have that $\hat{\varGamma}_{h}\in\mathcal{Y}_{0,\rho_{1},\kappa_{1},\delta_{1}}$
with 
\[
\left\Vert \hat{\varGamma}_{h}\right\Vert _{0}\leq KG^{-4}.
\]
We claim using Lemma \ref{lem:Technical lemma for analytic extension}
that the map $\hat{\mathcal{K}}:h\mapsto\varGamma_{h}+\hat{\mathcal{G}}\circ\mathcal{\hat{\mathcal{F}}}\left(h\right)$
is well defined from $B\left(KG^{-4}\right)\subset\mathcal{Y}_{0,\kappa_{1},\delta_{1},\sigma_{1}}^{\mathrm{flow}}$
to $\mathcal{Y}_{0,\kappa_{1},\delta_{1},\sigma_{1}}^{\mathrm{flow}}..$
Moreover, we see from the estimate \eqref{eq:bound fundamental matrix}
for the fundamental matrix $\varPsi\left(v\right)$ that there exists
$b_{2}$ such that 
\[
\left\Vert \mathcal{\hat{K}}\left(0\right)\right\Vert _{0}=\left\Vert \varGamma_{h}+\hat{\mathcal{G}}\left(X_{1}\circ\varGamma_{0}\right)\right\Vert _{0}\leq\frac{b_{2}}{2}G^{-4}.
\]
Finally, from Lemma \ref{lem:Technical lemma for analytic extension},
we conclude that for $G$ big enough $\hat{\mathcal{K}}$ is Lipschitz
in $B\left(b_{2}G^{-4}\right)\subset\mathcal{Y}_{0,\kappa_{1},\delta_{1},\sigma_{1}}^{\mathrm{flow}}$
with Lipschitz constant $KG^{-4}.$ 
\end{proof}

\subsubsection{From flow parametrization to Hamilton-Jacobi parametrization}

Now that we have extended the parametrization \eqref{eq:gammahat} across $v=0$, the
next step is to come back to the Hamilton-Jacobi parametrization \eqref{eq:H-J parametrization} so
we have both stable and unstable manifolds parametrized as graphs of the rofm
$\left(\tilde{r}_{h}\left(v\right),\tilde{y}^{u,s}\left(v,\xi\right)\right)$
and we can easily measure the distance between them.

We look for a change of variables of the form $\mathrm{Id}+f$ such
that 
\begin{equation}
\pi_{1}\circ\hat{\varGamma}\circ\left(\mathrm{Id}+f\right)\left(v,\xi\right)=\tilde{r}_{h}\left(v\right)\label{eq:inverse change 1}
\end{equation}
in the domain 
\begin{equation}\label{eq:boomerang domain tilde}
\tilde{D}_{\kappa_{1},\delta_{1}}= D_{\kappa_1,\delta_1}^{flow}\cap D_{\kappa_1,\delta_1},
\end{equation}
where $D_{\kappa_1,\delta_1}^{flow}, D_{\kappa_1,\delta_1}$ are the domains defined in \eqref{eq:flow domain} and \eqref{eq:boomerang domain}. Therefore, in
$D_{\rho_{1},\kappa_{1},\delta_{1}}^{u}\cap\tilde{D}_{\kappa_{1},\delta_{1}}$
the change $\mathrm{Id}+f$ is the inverse of the change $\mathrm{Id}+g$
obtained in Theorem \ref{thm:Existence change of variables}. We will see that this change of variables is unique under certain conditions, therefore, once we have $f$, the second component of the unstable manifold
is given by 
\begin{equation}
\pi_{2}\circ\hat{\varGamma}_1\circ\left(\mathrm{Id}+f\right)\left(v,\xi\right)=\frac{1}{y_{h}\left(v\right)}\partial_{v}T_1.\label{eq:invariant manifold hj}
\end{equation}
Using the properties of the unperturbed solution, i.e. $\pi_{1}\circ\varGamma_{0}\left(v,\xi\right)=\tilde{r}_{h}\left(v\right)$,
we can write equation \eqref{eq:inverse change 1} as 
\[
f=\mathcal{P}\left(f\right)
\]
where 
\[
\mathcal{P}\left(f\right)=\frac{-1}{y_{h}\left(v\right)}\left(\tilde{r}_{h}\left(v+f\left(v,\xi\right)\right)-\tilde{r}_{h}\left(v\right)-\tilde{y}_{h}\left(v\right)f\left(v,\xi\right)-\pi_{1}\circ\varGamma_{1}\circ\left(\text{Id}+f\right)\left(v,\xi\right)\right).
\]
\begin{prop}
\label{prop:Existence inverse change}Consider the constants $\kappa_{1},\ \delta_{1}$
and $\sigma_{1}$ given by Proposition \ref{prop:Analytic continuation existence}
and any $\kappa_{2}>\kappa_{1},$ $\delta_{2}<\delta_{1},$ $\sigma_{2}<\sigma_{1}.$
Then,

i) There exists $b_{3}>0$ such that for $G$ 
large enough, the operator $\mathcal{P}$ has a unique fixed point
$f\in\mathcal{Y}_{0,\kappa_{2},\delta_{2},\sigma_{2}}$with 
\[
\left\Vert f\right\Vert _{0}\leq b_{3}G^{-4}.
\]

ii) Equation \eqref{eq:invariant manifold hj} defines the graph of
the unstable manifold wich can be written as $T^{u}=T_{0}+T_{1}^{u}$
where $T_{1}^{u}$ satisfies 
\[
\left\Vert \partial_{v}T_{1}^{u}\right\Vert _{0}\leq KG^{-4}.
\]
\end{prop}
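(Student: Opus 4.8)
The plan is to prove item~(i) by a contraction mapping argument for $\mathcal{P}$ on a small ball of $\mathcal{Y}_{0,\kappa_{2},\delta_{2},\sigma_{2}}$, and then to read off item~(ii) from the resulting fixed point $f$ together with the estimates of Theorem~\ref{thm:Existence change of variables} and Proposition~\ref{prop:Analytic continuation existence}. The key preliminary observation is that, since $\tilde{D}_{\kappa_{1},\delta_{1}}\subset D_{\kappa_{1},\delta_{1}}^{\mathrm{flow}}$, the domain $\tilde{D}_{\kappa_{2},\delta_{2}}$ sits at distance $\mathcal{O}(1)$ both from $v=0$ and from the singularities $v=\pm i/3$ of the unperturbed homoclinic. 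Consequently all the weighted norms $\|\cdot\|_{\mu}$ are equivalent on it and the functions $\tilde{y}_{h}$, $1/\tilde{y}_{h}$ and $\tilde{r}_{h}''=\tilde{y}_{h}'$ are bounded there, with constants independent of $G$. The reductions $\kappa_{2}>\kappa_{1}$, $\delta_{2}<\delta_{1}$, $\sigma_{2}<\sigma_{1}$ are precisely what makes the composition estimates of Lemma~\ref{lem:Tech lemma composition} applicable to $\pi_{1}\circ\hat{\varGamma}_{1}$.

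With this in hand the fixed point scheme is direct. The combination $\tilde{r}_{h}(v+f)-\tilde{r}_{h}(v)-\tilde{y}_{h}(v)f$ appearing in $\mathcal{P}$ is the second order Taylor remainder of $\tilde{r}_{h}$ (recall $\tilde{r}_{h}'=\tilde{y}_{h}$), hence $\mathcal{O}(\|f\|_{0}^{2})$ on $\tilde{D}_{\kappa_{2},\delta_{2}}$; dividing by the bounded factor $1/\tilde{y}_{h}$ keeps this contribution quadratic and small. On the other hand $\mathcal{P}(0)=-\tilde{y}_{h}^{-1}\pi_{1}\circ\hat{\varGamma}_{1}$, so Proposition~\ref{prop:Analytic continuation existence} gives $\|\mathcal{P}(0)\|_{0}\le\tfrac{1}{2}b_{3}G^{-4}$ for a suitable $b_{3}>0$. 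For $f_{1},f_{2}\in B(b_{3}G^{-4})$, the difference $\mathcal{P}(f_{1})-\mathcal{P}(f_{2})$ is the difference of the two Taylor remainders, which is $\mathcal{O}\!\left((\|f_{1}\|_{0}+\|f_{2}\|_{0})\|f_{1}-f_{2}\|_{0}\right)$, plus the term $\pi_{1}\circ\hat{\varGamma}_{1}\circ(\mathrm{Id}+f_{1})-\pi_{1}\circ\hat{\varGamma}_{1}\circ(\mathrm{Id}+f_{2})$, which by Lemma~\ref{lem:Tech lemma composition}(ii) is $\mathcal{O}(\|\hat{\varGamma}_{1}\|_{0}\|f_{1}-f_{2}\|_{0})$; both are $\mathcal{O}(G^{-4}\|f_{1}-f_{2}\|_{0})$. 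Hence, for $G$ large, $\mathcal{P}$ maps $B(b_{3}G^{-4})$ into itself and is a contraction there, which yields the unique fixed point $f$ with $\|f\|_{0}\le b_{3}G^{-4}$; this is item~(i).

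For item~(ii), the identity $\pi_{1}\circ\hat{\varGamma}\circ(\mathrm{Id}+f)(v,\xi)=\tilde{r}_{h}(v)$ holds by construction, so on $\tilde{D}_{\kappa_{2},\delta_{2}}\times\mathbb{T}_{\sigma_{2}}$ the unstable manifold is the graph $(\tilde{r}_{h}(v),\tilde{y}^{u}(v,\xi))$ with $\tilde{y}^{u}=\pi_{2}\circ\hat{\varGamma}\circ(\mathrm{Id}+f)$. Since $\hat{\varGamma}$ parametrizes a Lagrangian submanifold invariant by the flow and its first component is now $\tilde{r}_{h}(v)$, this graph is exact, i.e.\ $\tilde{y}^{u}=\partial_{\tilde{r}}S^{u}$ with $S^{u}=S_{0}+S_{1}^{u}$; in the variable $v$ this reads $\tilde{y}^{u}=\tilde{y}_{h}(v)+\tilde{y}_{h}(v)^{-1}\partial_{v}T_{1}^{u}$ and defines $\partial_{v}T_{1}^{u}$. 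Writing $\tilde{y}^{u}-\tilde{y}_{h}=\left(\pi_{2}\circ\hat{\varGamma}_{0}\circ(\mathrm{Id}+f)-\tilde{y}_{h}\right)+\pi_{2}\circ\hat{\varGamma}_{1}\circ(\mathrm{Id}+f)$, the second summand is $\mathcal{O}(G^{-4})$ by Proposition~\ref{prop:Analytic continuation existence} and Lemma~\ref{lem:Tech lemma composition}(i), and the first is $\mathcal{O}(G^{-4})$ because the constraint $\pi_{1}\circ\hat{\varGamma}\circ(\mathrm{Id}+f)=\tilde{r}_{h}(v)$ forces the net reparametrization hidden in $\hat{\varGamma}_{0}\circ(\mathrm{Id}+f)$ to have image under $\tilde{r}_{h}$ differing from $\tilde{r}_{h}(v)$ by $-\pi_{1}\circ\hat{\varGamma}_{1}\circ(\mathrm{Id}+f)=\mathcal{O}(G^{-4})$, and $\tilde{r}_{h}$ has bounded inverse on $\tilde{D}_{\kappa_{2},\delta_{2}}$; multiplying by the bounded factor $\tilde{y}_{h}$ gives $\|\partial_{v}T_{1}^{u}\|_{0}\le KG^{-4}$. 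The one point that deserves care is precisely this last identification: one must verify that reparametrizing the flow invariant manifold so that its first coordinate becomes $\tilde{r}_{h}(v)$ preserves the Lagrangian structure, so that the new graph is genuinely of Hamilton--Jacobi type $\tilde{y}=\partial_{\tilde{r}}(S_{0}+S_{1}^{u})$ with $\partial_{v}T_{1}^{u}$ of size $\mathcal{O}(G^{-4})$, rather than merely controlling sup norms. The contraction estimate itself is routine once the geometry of $\tilde{D}_{\kappa_{2},\delta_{2}}$ (distance $\mathcal{O}(1)$ to $v=0$ and to $v=\pm i/3$) has been checked, since this collapses the weighted norms and keeps all the $1/\tilde{y}_{h}$ factors bounded.
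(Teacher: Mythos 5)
Your argument follows essentially the same route as the paper's: part (i) is the same contraction scheme on $B(b_3G^{-4})\subset\mathcal{Y}_{0,\kappa_2,\delta_2,\sigma_2}$, with the quadratic Taylor remainder of $\tilde{r}_h$ and Lemma \ref{lem:Tech lemma composition} giving the Lipschitz bound, and part (ii) reads the estimate off $\left\Vert \hat{\varGamma}_1\right\Vert_0\leq KG^{-4}$ from Proposition \ref{prop:Analytic continuation existence}. The only (harmless) divergence is in (ii): the paper works directly with the identity \eqref{eq:invariant manifold hj}, $\pi_2\circ\hat{\varGamma}_1\circ(\mathrm{Id}+f)=\tilde{y}_h^{-1}\partial_v T_1^u$, so it never needs your separate bound on $\pi_2\circ\hat{\varGamma}_0\circ(\mathrm{Id}+f)-\tilde{y}_h$; that extra step, and the exactness caveat you flag, make explicit a point the paper leaves implicit without changing the conclusion.
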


\begin{proof}
For the first part we observe that, for $f_{2},f_{1}\in B\left(KG^{-4}\right)\subset\mathcal{Y}_{0,\kappa_{2},\delta_{2},\sigma_{2}},$
\begin{align*}
\left|\tilde{r}_{h}\left(v+f_{2}\right)-\tilde{r}_{h}\left(v+f_{1}\right)-\tilde{y}_{h}\left(f_{2}-f_{1}\right)\right| & \leq K\left|f_{2}^{2}-f_{1}^{2}\right|\\
 & \leq KG^{-4}\left|f_{2}-f_{1}\right|.
\end{align*}
Then, from Lemma \ref{lem:Tech lemma composition} and the fact and
$\left\Vert \hat{\varGamma}_{1}^{u}\right\Vert _{0}\leq KG^{-4}$ we deduce
that 
\[
\left|\mathcal{P}\left(f_{2}\right)-\mathcal{P}\left(f_{1}\right)\right|\leq KG^{-4}\left|f_{2}-f_{1}\right|,
\]
i.e. $\mathcal{P}\left(f\right)$ is a contractive mapping on $B\left(b_{3}G^{-4}\right)\subset\mathcal{Y}_{0,\kappa_{2},\delta_{2},\sigma_{2}}$
for some $b_{3}>0$ so there exist a unique $f\in B\left(b_{3}G^{-4}\right)\subset\mathcal{Y}_{0,\kappa_{2},\delta_{2},\sigma_{2}}$
solving $f=\mathcal{P}\left(f\right).$

For the second part we have from equation \eqref{eq:invariant manifold hj}
that 
\[
\pi_{2}\circ\hat{\varGamma}_{1}\circ\left(\mathrm{Id}+f\right)\left(v,\xi\right)=\frac{1}{y_{h}\left(v\right)}\partial_{v}T_{1}.
\]
Therefore, 
\begin{eqnarray*}
\left\Vert \partial_{v}T_{1}\right\Vert _{0,0} & \leq & K\left\Vert \frac{1}{y_{h}\left(v\right)}\partial_{v}T_{1}\right\Vert _{0}\\
 & = & K\left\Vert \pi_{2}\circ\hat{\varGamma}_{1}\circ\left(\mathrm{Id}+f\right)\right\Vert _{0}\\
 & \leq & K\left\Vert \hat{\varGamma}_{1}\circ\left(\mathrm{Id}+f\right)\right\Vert _{0}\\
 & \leq & K\left\Vert \hat{\varGamma}_{\text{1}}\right\Vert _{0}\leq KG^{-4},
\end{eqnarray*}
where we have used Lemma \ref{lem:Tech lemma composition} and the
estimate for $\left\Vert \hat{\varGamma}_{\text{1}}\right\Vert _{0}$
obtained in Proposition \ref{prop:Analytic continuation existence}. 
\end{proof}
We sum up the results obtained in this section in the following theorem.
\begin{thm}
\label{thm: Final Theorem sect 3}Let $\kappa_{2},$ $\delta_{2}$
and $\sigma_{2}$ the constants given by Proposition \ref{prop:Existence inverse change}.
Then, for $G$ big enough there exist real analytic functions $T_{1}^{u,s}$
defined in $D_{\kappa_{2},\delta_{2}}$ which are solutions of equation
\eqref{eq:eq T1} and satisfy 
\[
\left\Vert \partial_{v}T_{1}^{u,s}\right\Vert _{3/2}\leq b_{4}G^{-4}
\]
for a certain $b_{4}>0$ independent of $G.$ 
\end{thm}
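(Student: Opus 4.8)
The statement to be proven, Theorem \ref{thm: Final Theorem sect 3}, is essentially a bookkeeping summary that collects the three-step extension procedure carried out in the preceding subsections, so the plan is simply to chain together the results already obtained and track the domains and norms carefully. First I would invoke Theorem \ref{thm:Existence infinity manifold close to infinity} to obtain the Hamilton--Jacobi generating function $T_1^u$ on $D_{\kappa,\delta}^{\infty}\times\mathbb{T}_\sigma$ with the bound $\llbracket T_1^u\rrbracket_{1,1/2}\leq b_0 G^{-4}$, and in particular $\|\partial_v T_1^u\|_{2,3/2}\leq b_0 G^{-4}$; restricting to $D_{\rho,\kappa,\delta}$ this gives the input needed for the next step. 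Then Theorem \ref{thm:Existence change of variables} produces the change of variables $\mathrm{Id}+g$, with $\|g\|_0\leq b_1 G^{-7/2}$ on the reduced domain, converting the graph parametrization $\varGamma$ into the flow-invariant parametrization $\hat\varGamma=\varGamma\circ(\mathrm{Id}+g)$, which solves \eqref{eq: flow param equation} and hence makes sense of extension by the flow.

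Next I would apply Proposition \ref{prop:Analytic continuation existence} to extend $\hat\varGamma=\hat\varGamma_0+\hat\varGamma_1$ across $v=0$ to the domain $D_{\kappa,\delta}^{\mathrm{flow}}$, using the flow of the Hamiltonian \eqref{eq:rescaled_hamiltonian} and the fixed point equation \eqref{eq: Fixed point equation Sect 5.2.2}; this yields $\hat\varGamma_1\in B(b_2 G^{-4})\subset\mathcal{Y}_{0,\kappa_1,\delta_1,\sigma_1}^{\mathrm{flow}}$. Finally, Proposition \ref{prop:Existence inverse change} supplies the inverse change of variables $\mathrm{Id}+f$ with $\|f\|_0\leq b_3 G^{-4}$ on $\tilde D_{\kappa_1,\delta_1}=D_{\kappa_1,\delta_1}^{\mathrm{flow}}\cap D_{\kappa_1,\delta_1}$, which brings us back to a graph parametrization $\bigl(\tilde r_h(v),\tilde y^{u}(v,\xi)\bigr)$ with $\tilde y^u=\tilde y_h+\tfrac1{\tilde y_h}\partial_v T_1^u$ and the estimate $\|\partial_v T_1^u\|_0\leq KG^{-4}$ on that domain. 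I would then observe that $D_{\kappa_2,\delta_2}$ is contained in the boomerang domain where all the above constructions overlap, and that since $T_1^u$ was already defined on $D_{\kappa,\delta}^{\infty}$ as an analytic solution of \eqref{eq:eq T1}, the uniqueness in Proposition \ref{prop:Existence inverse change}(i) guarantees that the two parametrizations agree on their common domain, so the extension is a genuine analytic continuation and $T_1^u$ solves \eqref{eq:eq T1} on all of $D_{\kappa_2,\delta_2}$.

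To upgrade the $\|\cdot\|_0$ bound coming from Proposition \ref{prop:Existence inverse change} to the $\|\cdot\|_{3/2}$ bound asserted in the theorem, I would note that on the part of $D_{\kappa_2,\delta_2}$ that lies in $D_{\rho,\kappa,\delta}$ the weight $(v^2+1/9)^{3/2}$ is bounded, so the $\|\cdot\|_{3/2}$ norm there is controlled by $\|\partial_v T_1^u\|_0$, which is $\mathcal{O}(G^{-4})$ by Proposition \ref{prop:Existence inverse change}(ii); on the part of $D_{\kappa_2,\delta_2}$ near the singularities $v=\pm i/3$, i.e.\ inside $D_{\kappa,\delta}^{\infty}$, we already have $\|\partial_v T_1^u\|_{2,3/2}\leq b_0 G^{-4}$ from Theorem \ref{thm:Existence infinity manifold close to infinity}, and dropping the $v^2$-weight at infinity (item ii of Lemma \ref{lem:Algebra-properties-for}) gives $\|\partial_v T_1^u\|_{3/2}\leq K G^{-4}$ there. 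Patching the two estimates over the (finitely overlapping) pieces of $D_{\kappa_2,\delta_2}$ produces the uniform bound $\|\partial_v T_1^u\|_{3/2}\leq b_4 G^{-4}$. The stable generating function $T_1^s$ on $D_{\kappa_2,\delta_2}$ and its estimate then follow immediately from the symmetry $(v,\xi)\mapsto(-v,-\xi)$ of equation \eqref{eq:eq T1}, since $D_{\kappa_2,\delta_2}$ is symmetric under $v\mapsto -v$.

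The only point requiring genuine care — rather than routine chaining — is the consistency of the different domain reductions: each step shrinks $\kappa\mapsto\kappa_i$, $\delta\mapsto\delta_i$, $\sigma\mapsto\sigma_i$, and one must check that the final boomerang domain $D_{\kappa_2,\delta_2}$ is still nonempty and still reaches an $\mathcal{O}(G^{-3})$ neighbourhood of $v=\pm i/3$, which is exactly what is needed for the subsequent exponentially-small estimate of the Melnikov potential. This is guaranteed because all reductions are by $\mathcal{O}(1)$ amounts independent of $G$ while the critical constraint $|\mathrm{Im}(v)|<1/3-\kappa_2 G^{-3}$ still allows $v$ within $\mathcal{O}(G^{-3})$ of the singularity for $G$ large; I would spell this out by simply noting that for $\delta_2\in(0,1/12)$ and $\kappa_2=\mathcal{O}(1)$ one can choose $G$ large enough that $D_{\kappa_2,\delta_2}\neq\emptyset$, exactly as remarked after \eqref{eq:infinity domain}.
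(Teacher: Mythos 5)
Your proposal follows the same route as the paper: the paper's own proof is a two-line combination of Theorem \ref{thm:Existence infinity manifold close to infinity} and Proposition \ref{prop:Existence inverse change}, using the covering $D_{\kappa_{2},\delta_{2}}\subset D_{\kappa,\delta}^{\infty,u}\cup\tilde{D}_{\kappa_{2},\delta_{2}}$ for the unstable manifold, exactly as you chain the intermediate results, and your handling of the weight $\left(v^{2}+1/9\right)^{3/2}$ on the two pieces of the covering is the right way to make the norm bookkeeping explicit.

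One assertion in your last step is wrong, though the conclusion survives: the boomerang domain $D_{\kappa_{2},\delta_{2}}$ defined in \eqref{eq:boomerang domain} is \emph{not} symmetric under $v\mapsto-v$. The first two conditions in \eqref{eq:boomerang domain} are exchanged by the reflection, but the third condition $\left|\mathrm{Im}(v)\right|>-\tan\beta_{2}\,\mathrm{Re}(v)+1/6-\delta$ is sent to $\left|\mathrm{Im}(v)\right|>\tan\beta_{2}\,\mathrm{Re}(v)+1/6-\delta$, so the reflected domain is a different set (the excluded wedge opens to the right instead of to the left). The correct argument for the stable manifold, and the one the paper uses, is that no extension is needed at all: the second and third conditions defining $D_{\kappa_{2},\delta_{2}}$ are precisely the two conditions defining $D_{\kappa,\delta}^{\infty,s}$ in \eqref{eq:infinity domain stable}, so $D_{\kappa_{2},\delta_{2}}\subset D_{\kappa,\delta}^{\infty,s}$ and Theorem \ref{thm:Existence infinity manifold close to infinity} (in its stable version, itself obtained by the symmetry $T_{1}^{s}(v,\xi)=-T_{1}^{u}(-v,-\xi)$ applied on the \emph{infinity} domain, which is the genuinely symmetric pair of domains) already gives $T_{1}^{s}$ with the stated bound on the whole boomerang domain. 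Replace your symmetry-of-the-boomerang claim by this inclusion and the proof is complete.
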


\begin{proof}
For the stable manifold, the result was obtained in Theorem \ref{thm:Existence infinity manifold close to infinity}
since $D_{\kappa_{2},\delta_{2}}\subset D_{\kappa,\delta}^{\infty,s}$.
For the unstable manifold, using that $D_{\kappa_{2},\delta_{2}}\subset D_{\kappa,\delta}^{\infty,u}\cup \tilde{D}_{\kappa_{2},\delta_{2}}$
the result follows from the combination of Theorem \ref{thm:Existence infinity manifold close to infinity}
and Proposition \ref{prop:Existence inverse change}. 
\end{proof}

\section{\label{sec:The-difference-between}The difference between the manifolds}

Once we have obtained the parametrization of the invariant manifolds
in the common domain $D_{\kappa,\delta}$ defined in \eqref{fig:Domain Boomerang},
the next step is to study their difference. To this end we define
\begin{equation}
\tilde{\varDelta}\left(v,\xi\right)=T^{s}\left(v,\xi\right)-T^{u}\left(v,\xi\right).\label{eq:Difference between manifolds def.}
\end{equation}
Substracting equation \eqref{eq:eq T1} for $T_{1}^{s}$ and $T_{1}^{u}$
one obtains that 
\[
\tilde{\varDelta}\in\mathrm{Ker}\tilde{\mathcal{L}}
\]
where $\mathcal{\tilde{L}}$ is the differential operator 
\[
\tilde{\mathcal{L}}=\left(1+A\left(v,\xi\right)\right)\partial_{v}-G^{3}\partial_{\xi}
\]
with 
\begin{equation}
A\left(v,\xi\right)=\frac{1}{2\tilde{y}_{h}^{2}}\left(\partial_{v}T_{1}^{s}-\partial_{v}T_{1}^{u}\right).\label{eq:Definition A}
\end{equation}
To obtain exponentially small bounds on the difference between the
invariant manifolds we will look for a close to identity change of
variables $\left(v,\xi\right)=\left(w+C\left(w,\xi\right),\xi\right)$
such that the function 
\begin{equation}\label{eq: delta periodic}
\varDelta\left(w,\xi\right)=\tilde{\varDelta}\left(w+C\left(w,\xi\right),\xi\right)\qquad \left(w,\xi\right)\in D_{\kappa,\delta}\times\mathbb{T}_{\sigma},
\end{equation}
satisfies 
\[
\mathcal{\varDelta\in\mathrm{Ker}L}
\]
 where $\mathcal{L}$ is the differential operator defined in \eqref{eq:linear operator L}.
The condition $\varDelta\in\mathrm{Ker}\mathcal{L}$ implies that $\varDelta=f\left(\xi-G^{3}w\right)$. Therefore, since  $\varDelta$ is periodic in $\xi$ it must be periodic in $w$. Since $\varDelta$ is real analytic and bounded
in a strip that reaches up to points $\mathcal{O}\left(G^{-3}\right)$
close to the singularities the exponentially small bound for $\left|\varDelta\left(w,\xi\right)\right|$
where $w\in\mathbb{R}$ comes straightforward by a classical argument
(see Lemma \ref{lem:Exponentially small technical lemma} below).
We devote the rest of the section to make this rigorous.

\subsection{Straightening the operator $\mathcal{\tilde{L}}$}

As we did in the previous sections we introduce the Banach spaces
\[
\mathcal{Q}_{\mu,\rho,\kappa,\delta,\sigma}=\left\{ h:D_{\kappa,\delta}\times\mathbb{T}_{\sigma}\rightarrow\mathbb{C}\colon h\;\text{is real analytic},\:\left\Vert h\right\Vert _{\mu}<\infty\right\} 
\]
where 
\[
\left\Vert h\right\Vert _{\mu}=\sup_{v\in D_{\kappa,\delta}}\left|\left(v^{2}+1/9\right)^{\mu}h\left(v\right)\right|.
\]
\begin{thm}
\label{thm:change of variables C}Let $\kappa_{2}$ and $\delta_{2}$
the constants defined in Theorem \ref{thm: Final Theorem sect 3}.
Let $\kappa_{3}>\kappa_{2},$ $\delta_{3}<\delta_{2}$ and $\sigma_{3}<\sigma_{2}$
be fixed. Then, for $G$ big enough, there exists $C\in\mathcal{Q}_{0,\kappa_{3},\delta_{3},\sigma_{3}}$
such that the function 
\[
\varDelta\left(w,\xi\right)=\tilde{\varDelta}\left(w+C\left(w,\xi\right),\xi\right)
\]
satisfies that $\varDelta\in\mathrm{Ker}\mathcal{L}.$ Moreover, we
have that 
\[
\left\Vert C\right\Vert _{0}\leq b_{5}G^{-7/2}
\]
for a certain $b_{5}>0$ independent of $G.$ 
\end{thm}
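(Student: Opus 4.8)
The goal is to produce a near-identity change $\left(v,\xi\right)=\left(w+C\left(w,\xi\right),\xi\right)$ that conjugates the operator $\tilde{\mathcal{L}}=\left(1+A\right)\partial_v-G^3\partial_\xi$ to the constant-coefficient operator $\mathcal{L}=\partial_v+G^3\partial_\xi$ (up to orientation). The plan is to derive the PDE that $C$ must satisfy from the requirement $\tilde{\mathcal{L}}\circ(\mathrm{Id}+C)=(\mathrm{Id}+C)_*\mathcal{L}$, i.e. demanding that $\tilde{\varDelta}\circ(\mathrm{Id}+C)\in\mathrm{Ker}\,\mathcal{L}$. Applying the chain rule to $\varDelta(w,\xi)=\tilde{\varDelta}(w+C(w,\xi),\xi)$ and using $\tilde{\varDelta}\in\mathrm{Ker}\,\tilde{\mathcal{L}}$, one finds that $\mathcal{L}\varDelta=0$ is equivalent to the linear (in the relevant sense, inhomogeneous) transport equation
\[
\mathcal{L}(C)=\partial_w C+G^3\partial_\xi C = -A\bigl(w+C(w,\xi),\xi\bigr)\bigl(1+\partial_w C\bigr),
\]
which I would rewrite as the fixed point equation $C=\mathcal{G}\circ\mathcal{N}(C)$, where $\mathcal{G}$ is the right inverse of $\mathcal{L}$ on the domain $D_{\kappa_3,\delta_3}$ (the analogue of the operator $\tilde{\mathcal{G}}$ from \eqref{eq:inverse operator 2}, with appropriate boundary points on the boomerang domain), and $\mathcal{N}(C)=-A\circ(\mathrm{Id}+C)\cdot(1+\partial_w C)$.

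The next step is to estimate the size of $A$. From its definition \eqref{eq:Definition A}, $A=\frac{1}{2\tilde{y}_h^2}(\partial_v T_1^s-\partial_v T_1^u)$, and Theorem \ref{thm: Final Theorem sect 3} gives $\|\partial_v T_1^{u,s}\|_{3/2}\leq b_4G^{-4}$. Since on the boomerang domain $D_{\kappa_3,\delta_3}$ all the weighted norms are comparable (the domain stays $\mathcal{O}(G^{-3})$-close to the singularities $v=\pm i/3$ but also contains a real segment), the factor $1/\tilde{y}_h^2\sim 1/(v^2+1/9)$ near the singularities contributes a loss of at most $\mathcal{O}(G^{3})$ when one trades the weight $\mu=3/2$ worth of decay against it; carrying this bookkeeping through yields $\|A\|_\mu\leq KG^{-7/2}$ for a suitable $\mu$ (morally $\mu=1/2$), which is where the exponent $b_5G^{-7/2}$ in the statement comes from. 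I would then invoke the inverse-operator lemma (the analogue of Lemma \ref{lem:Inverse operator lemma 2} for the boomerang domain) to see that $\mathcal{G}\circ\mathcal{N}$ maps a ball $B(b_5G^{-7/2})\subset\mathcal{Q}_{0,\kappa_3,\delta_3,\sigma_3}$ to itself: the term $\mathcal{N}(0)=-A$ has norm $\lesssim G^{-7/2}$, and the composition $A\circ(\mathrm{Id}+C)$ is controlled by the composition lemmas (the analogues of Lemma \ref{lem:Tech lemma composition}), while the factor $(1+\partial_w C)$ is close to $1$ because $\|\partial_w C\|\lesssim G^3\|C\|\lesssim G^{-1/2}\ll1$. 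Contractivity follows the same way: the difference $\mathcal{N}(C_1)-\mathcal{N}(C_2)$ splits into a piece controlled by $\|A\|$ times $\|\partial_w(C_1-C_2)\|$ (losing a power $G^3$ in the derivative but gaining $G^{-7/2}$ from $\|A\|$, net $G^{-1/2}$) and a piece controlled by the Lipschitz constant of $A\circ(\cdot)$ times $\|C_1-C_2\|$, again of size $\mathcal{O}(G^{-1/2})$. Banach fixed point then produces the unique $C\in B(b_5G^{-7/2})$.

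The main obstacle — and the reason the domain shrinks from $(\kappa_2,\delta_2)$ to $(\kappa_3,\delta_3)$ — is the derivative loss inherent in the composition and in the right inverse: each time we estimate $\partial_w C$ or compose with $\mathrm{Id}+C$ we pay a factor $G^3/(\kappa_3-\kappa_2)$, so one must check that these are absorbed by the smallness of $A$. Concretely the delicate point is verifying that $A$, which is a difference of two quantities each of size $G^{-4}$ but divided by $\tilde{y}_h^2$ that vanishes at $v=0$, is genuinely well-defined and small on a neighbourhood of the real segment $v=v^*$: this uses that $T_1^s-T_1^u$ itself vanishes to the right order at $v=0$ (equivalently, that the flow-extension parametrization $\hat{\varGamma}$ of Section \ref{sec:Existence-of-the} is regular there), so that $A$ extends analytically across $v=0$ even though each $\partial_v T_1^{u,s}/\tilde{y}_h^2$ does not. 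Once this regularity and the bound $\|A\|_\mu\lesssim G^{-7/2}$ are in hand, the rest is a routine contraction-mapping argument in the by-now-standard Banach-space framework of the paper.
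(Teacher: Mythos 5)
Your overall strategy is exactly the paper's: straighten $\tilde{\mathcal{L}}$ by a near-identity change $(v,\xi)=(w+C(w,\xi),\xi)$, turn the condition on $C$ into a fixed-point equation via the right inverse $\tilde{\mathcal{G}}$ of $\mathcal{L}$ on the boomerang domain, estimate $A$ from the bound $\left\Vert \partial_{v}T_{1}^{u,s}\right\Vert _{3/2}\leq b_{4}G^{-4}$ of Theorem \ref{thm: Final Theorem sect 3} together with the behaviour of $\tilde{y}_{h}$ near $v=\pm i/3$, and close by contraction; the exponent $G^{-7/2}$ arises from the same $G^{1/2}\cdot G^{-4}$ bookkeeping the paper uses.

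However, the transport equation you derive for $C$ is not the right one, and this is a step that fails as written. Carrying out the chain rule on $\varDelta(w,\xi)=\tilde{\varDelta}(w+C(w,\xi),\xi)$ gives
\[
\mathcal{L}\varDelta=\partial_{v}\tilde{\varDelta}\bigl(w+C,\xi\bigr)\,\bigl[\,\mathcal{L}(C)-A\circ\left(\mathrm{Id}+C\right)\,\bigr],
\]
so the condition $\varDelta\in\mathrm{Ker}\,\mathcal{L}$ is equivalent to $\mathcal{L}(C)=A\circ\left(\mathrm{Id}+C\right)$, with no factor $\left(1+\partial_{w}C\right)$ and no minus sign. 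Your equation $\mathcal{L}(C)=-A\circ(\mathrm{Id}+C)\left(1+\partial_{w}C\right)$ is essentially the equation satisfied by the \emph{inverse} change $v\mapsto v+D(v,\xi)$ (and even then $A$ would be evaluated at $(v,\xi)$ rather than composed with $\mathrm{Id}+C$); solving it produces a function that does not conjugate $\tilde{\mathcal{L}}$ to $\mathcal{L}$. The fix is immediate and actually simplifies your argument: with the correct right-hand side there is no $\partial_{w}C$ term, so you do not need to pay the $G^{3}/(\kappa_{3}-\kappa_{2})$ derivative loss in the contraction estimate for that factor, only for the Lipschitz constant of the composition $C\mapsto A\circ(\mathrm{Id}+C)$, exactly as in the paper. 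One further small point: your worry about $A$ being well defined across $v=0$ is moot, since the boomerang domain $D_{\kappa,\delta}$ on which $C$ is sought excludes a neighbourhood of $v=0$ by construction, so $\tilde{y}_{h}$ does not vanish there and no cancellation in $T_{1}^{s}-T_{1}^{u}$ is needed.
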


\begin{proof}
Using the chain rule we obtain that the implication $\varDelta\in\mathrm{Ker}\mathcal{L}$
if and only if $\varDelta\in\mathrm{Ker}\mathcal{\tilde{L}}$, is
equivalent to finding $C$ satisfying 
\begin{align*}
\mathcal{L}\left(C\right) & =A_{|v=w+C\left(w\right)}\\
 & =A\circ\left(\mathrm{Id}+C\right),
\end{align*}
where $A\left(v,\xi\right)$ was defined in \eqref{eq:Definition A}.
We can rewrite this equation as a fixed point equation 
\[
C=\tilde{\mathcal{G}}\left(A\circ\left(\mathrm{Id}+C\right)\right),
\]
where $\tilde{\mathcal{G}}$ is the inverse operator defined in \eqref{eq:inverse operator 2}.
Using the bounds for $\partial_{v}T_{1}^{u,s}$ in Theorem \ref{thm: Final Theorem sect 3},
 the properties of the homoclinic orbit stated in Section \ref{sec:The-integrable-system}
, and Lemma \ref{lem:Tech lemma composition} for the composition,
we obtain that, for $C\in B\left(KG^{-4}\right)\subset\mathcal{Q}_{0,\kappa_{3},\delta_{3},\sigma_{3}},$
\[
\left\Vert A\circ\left(\mathrm{Id}+C\right)\right\Vert _{\text{1/2}}\leq K^{'}G^{-4}
\]
for some $K^{'}>0$ independent of $G.$ Hence, from Lemma \ref{lem:Tech lemma composition}
we observe that the map $C\mapsto\tilde{\mathcal{G}}\left(A\circ\left(\mathrm{Id}+C\right)\right)$
is well defined from $C\in B\left(KG^{-7/2}\right)\subset Q_{0,\kappa_{3},\delta_{3},\sigma_{3}}\rightarrow\mathcal{Q}_{0,\kappa_{3},\delta_{3},\sigma_{3}}.$
Moreover, we also get 
\[
\left\Vert \mathcal{\tilde{G}}\left(A\circ\left(\mathrm{Id}+C\right)_{|C=0}\right)\right\Vert _{0}\leq\frac{b_{5}}{2}G^{-7/2},
\]
for some $b_{5}$ independent of $G.$ Hence, it only remains to prove
that the map $C\mapsto\tilde{\mathcal{G}}\left(A\circ\left(\mathrm{Id}+C\right)\right)$
is contractive on the ball $B\left(b_{5}G^{-7/2}\right)\subset\mathcal{Q}_{0,\kappa_{3},\delta_{3,},\sigma_{3}}.$
Again by Lemma \ref{lem:Tech lemma composition} we have that if $C_{1},C_{2}\in B\left(b_{5}G^{-7/2}\right)\subset\mathcal{Q}_{0,\kappa_{3},\delta_{3},\sigma_{3}}$,
then 
\begin{eqnarray*}
\left\Vert A\circ\left(\mathrm{Id}+C_{2}\right)-A\circ\left(\mathrm{Id}+C_{1}\right)\right\Vert _{1/2} & \leq & KG^{3}\left\Vert A\right\Vert _{1/2}\left\Vert C_{2}-C_{1}\right\Vert _{0}\\
 & \leq & KG^{-1}\left\Vert C_{2}-C_{1}\right\Vert _{0},
\end{eqnarray*}
and contractivity follows from Lemma \ref{lem:Inverse operator lemma 2}
for $G$ big enough. 
\end{proof}

\subsection{Estimates for the difference between the invariant manifolds }

Now we exploit the fact that the function $\varDelta\left(w,\xi\right)$
defined in \eqref{eq: delta periodic} satisfies 
\[
\varDelta\in\mathrm{Ker}\mathcal{L}
\]
to get exponentially small bounds on the real line.
\begin{lem}
\label{lem:Exponentially small technical lemma}Let $h:D_{\kappa,\delta}\times\mathbb{T}_{\sigma}\mathbb{\rightarrow\mathbb{C}}$
be a real-analytic function such that $h\in\mathcal{Q}_{0,\kappa,\delta,\sigma}$
and $h\in\mathrm{Ker}\mathcal{L}.$ Then,

i) $h$ is of the form 
\[
h\left(w,\xi\right)=\sum_{l\in\mathbb{Z}}h^{[l]}\left(w\right)e^{il\xi}=\sum_{l\in\mathbb{Z}}\beta^{[l]}e^{il\left(\xi-G^{3}w\right)}.
\]

ii) the coefficients $\beta^{[l]}$ satisfy the bounds 
\[
\left|\beta^{[l]}\right|\leq\left\Vert h\right\Vert _{0}K^{\left|l\right|}e^{\frac{-\left|l\right|G^{3}}{3}}.
\]
\end{lem}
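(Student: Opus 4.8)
The plan is to handle the two items separately. Item (i) is an immediate consequence of the explicit form of $\mathcal{L}$, and item (ii) is the classical argument that turns analyticity of $h$ in a complex strip of half-width close to $1/3$ into an exponentially small bound on its Fourier modes.

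For item (i), I would write the Fourier expansion of $h$ in the angle, $h(w,\xi)=\sum_{l\in\mathbb{Z}}h^{[l]}(w)e^{il\xi}$, which makes sense because $h$ is real analytic on $D_{\kappa,\delta}\times\mathbb{T}_{\sigma}$. Plugging this into $\mathcal{L}h=(\partial_{v}+G^{3}\partial_{\xi})h=0$ and separating Fourier modes gives, for each $l\in\mathbb{Z}$, the scalar linear ODE $\partial_{w}h^{[l]}+ilG^{3}h^{[l]}=0$ on the connected domain $D_{\kappa,\delta}$. Hence $h^{[l]}(w)=\beta^{[l]}e^{-ilG^{3}w}$ for a constant $\beta^{[l]}$ (explicitly $\beta^{[l]}=h^{[l]}(w_{0})e^{ilG^{3}w_{0}}$ for any fixed $w_{0}\in D_{\kappa,\delta}$), and summing in $l$ yields $h(w,\xi)=\sum_{l\in\mathbb{Z}}\beta^{[l]}e^{il(\xi-G^{3}w)}$, i.e.\ item (i).

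For item (ii), the key point is that the identity $\beta^{[l]}=h^{[l]}(w)\,e^{ilG^{3}w}$ holds for \emph{every} $w\in D_{\kappa,\delta}$, so one is free to evaluate at the most favourable point of the domain. Since $h\in\mathcal{Q}_{0,\kappa,\delta,\sigma}$ one has $|h^{[l]}(w)|\le\|h\|_{0}$ for every $w\in D_{\kappa,\delta}$ (bounding the $l$-th Fourier coefficient by the supremum; one could even gain a factor $e^{-|l|\sigma}$ by shifting the $\xi$-contour to $|\mathrm{Im}\,\xi|=\sigma$, but this is not needed). For $l>0$ I would pick $w=w_{l}:=i\bigl(1/3-(\kappa+1)G^{-3}\bigr)$; this point lies in $D_{\kappa,\delta}$ for $G$ large, since on the positive imaginary axis the domain contains every $v=it$ with $1/6-\delta<t<1/3-\kappa G^{-3}$. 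Then $|e^{ilG^{3}w_{l}}|=e^{-lG^{3}(1/3-(\kappa+1)G^{-3})}=e^{l(\kappa+1)}e^{-lG^{3}/3}$, whence $|\beta^{[l]}|\le\|h\|_{0}\,e^{l(\kappa+1)}e^{-lG^{3}/3}$. For $l<0$ the same estimate follows by evaluating instead at $-i\bigl(1/3-(\kappa+1)G^{-3}\bigr)$, and for $l=0$ it is trivial. Taking $K=e^{\kappa+1}$ gives $|\beta^{[l]}|\le\|h\|_{0}K^{|l|}e^{-|l|G^{3}/3}$, which is item (ii).

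The argument is essentially routine. The only thing worth checking carefully is that the evaluation points $w_{l}$ really belong to $D_{\kappa,\delta}$ — equivalently, that the domain on which the earlier results produce a bounded real-analytic function in $\mathrm{Ker}\,\mathcal{L}$ does reach within $\mathcal{O}(G^{-3})$ of the singularities $v=\pm i/3$ — since it is precisely the half-width $\approx 1/3$ of this strip that fixes the exponent $G^{3}/3$ in the bound and the dependence of the constant $K$ on $\kappa$. There is no genuine obstacle here: all the real work went into constructing $\varDelta$ and the straightening change $C$ in the preceding results, which guarantee analyticity and boundedness on such a domain.
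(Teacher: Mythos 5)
Your proposal is correct and follows essentially the same route as the paper: reduce $\mathcal{L}h=0$ to the ODE $\partial_w h^{[l]}+ilG^{3}h^{[l]}=0$ for each Fourier mode, and then evaluate $\beta^{[l]}=h^{[l]}(w)e^{ilG^{3}w}$ at a point of $D_{\kappa,\delta}$ on the imaginary axis at distance $\mathcal{O}(G^{-3})$ from $\pm i/3$. Your sign bookkeeping (top vertex for $l>0$, bottom for $l<0$) is in fact the correct one, and evaluating slightly inside the domain rather than at the vertex itself is an inessential variant.
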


\begin{proof}
Since $h\in\mathrm{Ker}\mathcal{L}$ and is periodic in $\xi$, we have that each Fourier coefficient
$h^{[l]}$ satisfies 
\[
\frac{\text{d}}{\text{d}w}h^{[l]}+ilG^{3}h^{[l]}=0
\]
so it has to be 
\[
h^{[l]}\left(w\right)=\beta^{[l]}e^{-ilG^{3}w}
\]
for certain constants $\beta^{[l]}$. Moreover, evaluating this equality
at the top vertex $w_{2}=i\left(1/3-\kappa G^{-3}\right)$ of the
domain $D_{\kappa,\delta}$ for $l<0$ and at the bottom vertex $\bar{w}_{2}=i\left(1/3-\kappa G^{-3}\right)$
for $l>0$ we obtain that 
\begin{align*}
\left|\beta^{[l]}\right| & \leq\max\left\{ h^{[l]}\left(w_{2}\right),h^{[l]}\left(\bar{w}_{2}\right)\right\} e^{\frac{-\left|l\right|G^{3}}{3}}e^{\left|l\right|\kappa_{3}}\\
 & \leq\left\Vert h\right\Vert _{0}e^{\left|l\right|\kappa_{3}}e^{\frac{-\left|l\right|G^{3}}{3}}\\
 & \leq\left\Vert h\right\Vert _{0}K^{\left|l\right|}e^{\frac{-\left|l\right|G^{3}}{3}},
\end{align*}
for a constant $K$ independent of $G$ and $l$. Therefore, for $u\in\mathbb{R}\cap D_{\kappa,\delta}$ 
\[
\left|h^{[l]}\left(u\right)\right|=\left|\beta^{[l]}\right|\leq\left\Vert h\right\Vert _{0}K^{\left|l\right|}e^{\frac{-\left|l\right|G^{3}}{3}}.
\]
\end{proof}
Using this lemma we already have exponentially small bounds for $\varDelta\left(w,\xi\right).$
Nevertheless, our goal is to prove that the function $L$ defined
in \eqref{eq:Melnikov potential} is the main term in $\varDelta.$
Thus we study the function 
\[
\mathcal{E}\left(w,\xi\right)=\varDelta\left(w,\xi\right)-L\left(w,\xi\right).
\]
\begin{lem}
Consider the constants $\kappa_{3}\ \delta_{3}$ and $\sigma_{3}$
defined in Theorem \ref{thm:change of variables C}. Then, for $\left(w,\xi\right)\in\left(D_{\kappa_{3},\delta_{3}}\cap\mathbb{R}\right)\times\mathbb{T}$
we get 
\[
\left|\mathcal{E}\left(w,\xi\right)-E\right|\leq KG^{-\nicefrac{7}{2}}e^{\frac{-G^{3}}{3}}.
\]
where $E$ is a constant and 
\[
\left|\partial_{w}\mathcal{E}\right|\leq KG^{-\nicefrac{1}{2}}e^{\frac{-G^{3}}{3}}.
\]
\end{lem}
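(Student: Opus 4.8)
The plan is to combine two facts. First, thanks to the straightening of Theorem~\ref{thm:change of variables C} the error $\mathcal{E}=\varDelta-L$ lies in $\mathrm{Ker}\,\mathcal{L}$: indeed $\varDelta\in\mathrm{Ker}\,\mathcal{L}$ by that theorem, while $L$ depends on $(v,\xi)$ only through the combination $\xi-G^{3}v$ — this is read off from the Fourier expansion in Proposition~\ref{prop:Melnikov potential}, or proved by differentiating \eqref{eq:Melnikov potential} under the integral sign and noting that $\mathcal{L}$ applied to the integrand equals $\partial_{s}$ of the integrand, which integrates to $0$ because $V\to 0$ as $s\to\pm\infty$. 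Second, once one knows a \emph{polynomial} bound $\left\Vert\mathcal{E}\right\Vert_{0}\le KG^{-7/2}$ on the whole complex boomerang domain $D_{\kappa_{3},\delta_{3}}\times\mathbb{T}_{\sigma_{3}}$, the exponentially small estimates follow from the classical strip argument of Lemma~\ref{lem:Exponentially small technical lemma}.

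Concretely, applying Lemma~\ref{lem:Exponentially small technical lemma} to $h=\mathcal{E}$ gives $\mathcal{E}(w,\xi)=\sum_{l\in\mathbb{Z}}\epsilon^{[l]}e^{il(\xi-G^{3}w)}$ with $\left|\epsilon^{[l]}\right|\le K^{|l|}e^{-|l|G^{3}/3}\left\Vert\mathcal{E}\right\Vert_{0}$. Setting $E:=\epsilon^{[0]}$, for $w\in\mathbb{R}$ and $G$ large the geometric series over $l\neq0$ is dominated by $l=\pm1$, so $\left|\mathcal{E}(w,\xi)-E\right|\le\sum_{l\neq0}\left|\epsilon^{[l]}\right|\le K\left\Vert\mathcal{E}\right\Vert_{0}e^{-G^{3}/3}$; differentiating the series in $w$ produces an extra factor $lG^{3}$, hence $\left|\partial_{w}\mathcal{E}(w,\xi)\right|\le G^{3}\sum_{l\neq0}|l|\left|\epsilon^{[l]}\right|\le KG^{3}\left\Vert\mathcal{E}\right\Vert_{0}e^{-G^{3}/3}$. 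Since $G^{3}\cdot G^{-7/2}=G^{-1/2}$, the two claimed bounds both follow once $\left\Vert\mathcal{E}\right\Vert_{0}\le KG^{-7/2}$ is established; note the constant $E$ is forced to be real because $\mathcal{E}$ is real analytic and real on the real axis.

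It remains to prove the polynomial estimate $\left\Vert\mathcal{E}\right\Vert_{0}\le KG^{-7/2}$, which is the main obstacle. The idea is to write $\mathcal{E}=(\varDelta-\tilde{\varDelta})+(\tilde{\varDelta}-L)$ on $D_{\kappa_{3},\delta_{3}}$. The second summand equals $(T_{1}^{s}-L_{1}^{s})-(T_{1}^{u}-L_{1}^{u})$ — recall that $L$ is, up to sign, $L_{1}^{s}-L_{1}^{u}$ with $L_{1}^{u}=\mathcal{G}^{u}(V)$ — and is second order small by \eqref{eq:difference with potential}; translating the weighted norm $\left\Vert\cdot\right\Vert_{1,1/2}$ into a plain sup bound on the boomerang, where $|v^{2}+1/9|\geq cG^{-3}$, contributes a term of size $\mathcal{O}(G^{-5})$. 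For the first summand, $\varDelta-\tilde{\varDelta}=\tilde{\varDelta}\circ(\mathrm{Id}+C)-\tilde{\varDelta}$, bounded by the mean value theorem using $\left\Vert C\right\Vert_{0}\le b_{5}G^{-7/2}$ (Theorem~\ref{thm:change of variables C}) and the estimate on $\partial_{v}T_{1}^{u,s}$ of Theorem~\ref{thm: Final Theorem sect 3}. The hard part is exactly this last bound: estimating $\varDelta$ and $L$ separately near the vertices of $D_{\kappa_{3},\delta_{3}}$, which lie at distance $\mathcal{O}(G^{-3})$ of the singularities $v=\pm i/3$, only gives $\mathcal{O}(G^{-5/2})$, so one has to keep track of the fact that the leading singular parts of $\varDelta$ and of $L$ coincide there and only their difference survives — this cancellation is precisely what the Melnikov approximation \eqref{eq:difference with potential} together with the $\mathcal{O}(G^{-7/2})$ smallness of the straightening $C$ encode — carrying out the estimate in the weighted norms adapted to the singularities. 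Granting $\left\Vert\mathcal{E}\right\Vert_{0}\le KG^{-7/2}$, the previous two paragraphs conclude the proof.
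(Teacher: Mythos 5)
Your proposal follows essentially the same route as the paper: you show $\mathcal{E}\in\mathrm{Ker}\,\mathcal{L}$ because $L=L^{s}-L^{u}$ with $L^{*}=\mathcal{G}^{*}(V)$, reduce everything to the polynomial bound $\left\Vert\mathcal{E}\right\Vert_{0}\leq KG^{-7/2}$ on the complex boomerang domain via the same decomposition $\mathcal{E}=(\varDelta-\tilde{\varDelta})+\bigl((T_{1}^{s}-L^{s})-(T_{1}^{u}-L^{u})\bigr)$ with the same inputs (estimate \eqref{eq:difference with potential}, the bound on $C$, and the weighted-norm conversions), and then conclude by the Fourier/strip argument of Lemma \ref{lem:Exponentially small technical lemma} exactly as the paper does, including the factor $lG^{3}$ from differentiating the series. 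This matches the paper's proof in both structure and detail.
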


\begin{proof}
Notice that $L=L^{s}-L^{u}$ where $L^{*}=\mathcal{G}^{*}\left(V\right)$,
with $\mathcal{G}^{u,s}$ are the left inverse operators introduced
in \eqref{eq:inverse operator}. Then, it is clear that $\mathcal{L}\left(L\right)=0$
and we have that $\mathcal{E}\in\mathrm{Ker}\mathcal{L}.$ We bound
$\mathcal{E}$ in the domain $D_{\kappa,\delta}$ so that we can apply Lemma \ref{lem:Exponentially small technical lemma}.
We decompose $\mathcal{E}=\mathcal{E}_{1}^{s}-\mathcal{E}_{1}^{u}+\mathcal{E}_{2}$
where 
\begin{align*}
\mathcal{E}_{1}^{*} & =T_{1}^{*}-L^{*}\\
\mathcal{E}_{2} & =\varDelta-\tilde{\varDelta}.
\end{align*}
From Lemma \ref{lem:Algebra-properties-for} and equation \eqref{eq:difference with potential}
we have 
\[
\left\Vert \mathcal{E}_{1}^{*}\right\Vert _{0}=\left\Vert T_{1}^{*}-L^{*}\right\Vert _{0}\leq KG^{3/2}\left\Vert T_{1}^{*}-L^{*}\right\Vert _{1/2}\leq KG^{-5}.
\]
For the second term we use Lemmas \ref{lem:Algebra-properties-for}, \ref{lem:Tech lemma composition}
and the bounds for $\tilde{\varDelta}$ and $C$ from Theorems \ref{thm: Final Theorem sect 3}
and \ref{thm:change of variables C} to obtain
\begin{align*}
\left\Vert \mathcal{E}_{2}\right\Vert _{0} & =\left\Vert \tilde{\varDelta}\circ\left(\mathrm{Id}+C\right)-\tilde{\varDelta}\right\Vert _{0}\leq KG^{3}\left\Vert \tilde{\varDelta}\right\Vert _{0}\left\Vert C\right\Vert _{0}\\
 & \leq KG^{9/2}\left\Vert \tilde{\varDelta}\right\Vert _{1/2}\left\Vert C\right\Vert _{0}\leq KG^{-7/2},
\end{align*}

Combining these results 
\[
\left\Vert \mathcal{E}\right\Vert _{0}\leq KG^{-7/2}.
\]
Hence, by direct application of Lemma \ref{lem:Exponentially small technical lemma}
we obtain that for $u\in D_{\kappa_{3},\delta_{3}}\cap\mathbb{R}$
\[
\left|\mathcal{E}^{[l]}\left(w\right)\right|\leq G^{-7/2}K^{\left|l\right|}e^{\frac{-\left|l\right|G^{3}}{3}}.
\]
Now, defining $E=\mathcal{E}^{[0]}$ (notice that by Lemma \ref{lem:Exponentially small technical lemma}, $\mathcal{E}^{0}$ is constant) we have that for $\left(w,\xi\right)\in\left(D_{\kappa_{3},\delta_{3}}\cap\mathbb{R}\right)\times\mathbb{T}_{\sigma_{3}}$
\begin{align*}
\left|\mathcal{E}-E\right| & \leq\sum_{\left|l\right|>1}\left|\mathcal{E}^{[l]}\left(w\right)\right|\\
 & \leq G^{-\nicefrac{7}{2}}e^{\frac{-G^{3}}{3}}\sum_{\left|l\right|>2}\left(Ke^{\frac{-G^{3}}{3}}\right)^{\left|l\right|}\\
 & \leq KG^{-\nicefrac{7}{2}}e^{\frac{-G^{3}}{3}}.
\end{align*}
Finally, it is a straightforward computation to check that 
\[
\left|\frac{\text{d}}{\text{d}w}\mathcal{E}^{[l]}\left(w\right)\right|\leq G^{-1/2}K^{\left|l\right|}e^{\frac{-\left|l\right|G^{3}}{3}}
\]
so we conclude that 
\[
\left|\partial_{w}\mathcal{E}\right|\leq KG^{-\nicefrac{1}{2}}e^{\frac{-G^{3}}{3}}.
\]
 
\end{proof}
There is only one step left for achieving our goal, going back to
the original variables $\left(v,\xi\right)$. This is done in the
next lemma.
\begin{lem}
Consider the function
\[
\tilde{\mathcal{E}}\left(v,\xi\right)=\tilde{\varDelta}\left(v,\xi\right)-L\left(v,\xi\right)
\]
where $\tilde{\varDelta}\left(v,\xi\right)$ is defined in \eqref{eq:Difference between manifolds def.}
and $L\left(v,\xi\right)$ is defined in \eqref{eq:Melnikov potential}.
Fix $\kappa_{4}>\kappa_{3},$ $\delta_{4}<\delta_{3}$ and $\sigma_{4}>\sigma_{3}$.
Then, for $\left(v,\xi\right)\in\left(D_{\kappa_{4},\delta_{4}}\cap\mathbb{R}\right)\times\mathbb{T}_{\sigma_{4}},$
\begin{equation}
\left|\tilde{\mathcal{E}}\left(v,\xi\right)-E\right|\leq KG^{-\nicefrac{7}{2}}e^{\frac{-G^{3}}{3}}\label{eq:difference}
\end{equation}
where $E$ is a constant and 
\begin{equation}
\left|\partial_{v}\tilde{\mathcal{E}}\left(v,\xi\right)\right|\leq KG^{\nicefrac{-1}{2}}e^{\frac{-G^{3}}{3}}.\label{eq:difference derivative}
\end{equation}
\end{lem}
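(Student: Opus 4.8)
The strategy is to transport the estimates for $\mathcal{E}(w,\xi)=\varDelta(w,\xi)-L(w,\xi)$ proved in the preceding lemma — valid along the real axis — back to the original variable $v$ via the straightening change of variables $\mathrm{Id}+C$ of Theorem \ref{thm:change of variables C}. First I would invert this map: since $\|C\|_0\le b_5 G^{-7/2}$, for $G$ large the map $(w,\xi)\mapsto(w+C(w,\xi),\xi)$ is a near-identity real-analytic diffeomorphism, and a contraction argument in the space $\mathcal{Q}_{0,\kappa_4,\delta_4,\sigma_4}$ yields a real-analytic $\hat C$, $\|\hat C\|_0\le KG^{-7/2}$, such that $(v,\xi)\mapsto(v+\hat C(v,\xi),\xi)$ is its inverse on $D_{\kappa_4,\delta_4}$. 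The reduction $\kappa_4>\kappa_3$, $\delta_4<\delta_3$, which amounts to a margin $\mathcal{O}(G^{-3})$ near the vertices $v=\pm i/3$ and $\mathcal{O}(1)$ elsewhere, dominates the displacement $\|\hat C\|_0$, so a real $v\in D_{\kappa_4,\delta_4}$ is sent to a real point $w=v+\hat C(v,\xi)\in D_{\kappa_3,\delta_3}$. By construction $\tilde\varDelta(v,\xi)=\varDelta\big(v+\hat C(v,\xi),\xi\big)$, so writing $\varDelta=\mathcal{E}+L$,
\[
\tilde{\mathcal{E}}(v,\xi)=\mathcal{E}\big(v+\hat C(v,\xi),\xi\big)+\Big[L\big(v+\hat C(v,\xi),\xi\big)-L(v,\xi)\Big].
\]

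For the first summand I would invoke the preceding lemma directly at the real point $w=v+\hat C(v,\xi)$: this gives $|\mathcal{E}(w,\xi)-E|\le KG^{-7/2}e^{-G^3/3}$ and $|\partial_w\mathcal{E}(w,\xi)|\le KG^{-1/2}e^{-G^3/3}$, and by the chain rule the derivative in $v$ only picks up the factor $\partial_v w=\big(1+\partial_w C(w,\xi)\big)^{-1}=1+\mathcal{O}(G^{-1/2})$ (a Cauchy estimate on the thin part of the domain), which is harmless since it multiplies an already exponentially small quantity. For the second summand I would use that the zeroth Fourier mode of $L$ is the constant $L^{[0]}(G)$, unaffected by the substitution, so $L\circ(\mathrm{Id}+\hat C)-L=(L-L^{[0]})\circ(\mathrm{Id}+\hat C)-(L-L^{[0]})$; since $L-L^{[0]}=2\sum_{l\ge1}L^{[l]}(G)\cos\big(l(\xi-G^3 v)\big)$, Proposition \ref{prop:Melnikov potential} gives $\|\partial_v(L-L^{[0]})\|_{C^0}\le KG^{1/2}e^{-G^3/3}$ and $\|\partial_v^2(L-L^{[0]})\|_{C^0}\le KG^{7/2}e^{-G^3/3}$ (each $\partial_v$ producing a power of the fast frequency $G^3$), and the mean value theorem bounds this summand, and its $v$-derivative, by these quantities times $|\hat C|$ and $|\partial_v\hat C|$ evaluated on the real line.

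The delicate point — and the one I expect to require real work — is that the global bound $\|\hat C\|_0=\mathcal{O}(G^{-7/2})$, which is saturated only near $v=\pm i/3$, is not good enough in the second summand: multiplied by $\|\partial_v^2(L-L^{[0]})\|_{C^0}\sim G^{7/2}e^{-G^3/3}$ it would give an $\mathcal{O}(e^{-G^3/3})$ error, whereas we need $\mathcal{O}(G^{-1/2}e^{-G^3/3})$. So I would establish the sharper bounds $|\hat C|\le KG^{-4}$ and $|\partial_v\hat C|\le KG^{-4}$ on a complex neighbourhood of $D_{\kappa_4,\delta_4}\cap\mathbb{R}$. This follows by revisiting the fixed-point equation $C=\tilde{\mathcal{G}}\big(A\circ(\mathrm{Id}+C)\big)$ with $A=\tfrac{1}{2\tilde y_h^2}\partial_v\tilde\varDelta$: on such a neighbourhood one stays at $\mathcal{O}(1)$ distance from $v=\pm i/3$, hence $v^2+1/9$ and $\tilde y_h^{-2}$ are bounded there and $|A|\le KG^{-4}$ by Theorem \ref{thm: Final Theorem sect 3}; choosing the integration paths defining $\tilde{\mathcal{G}}$ to remain in that region gives $|C|\le KG^{-4}$ on it, and a Cauchy estimate on disks of radius $\mathcal{O}(1)$ gives the same bound for $\partial_v C$, and therefore for $\hat C$ and $\partial_v\hat C$. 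Substituting these estimates into the two displays above yields $|\tilde{\mathcal{E}}(v,\xi)-E|\le KG^{-7/2}e^{-G^3/3}$ and $|\partial_v\tilde{\mathcal{E}}(v,\xi)|\le KG^{-1/2}e^{-G^3/3}$ for all $(v,\xi)\in(D_{\kappa_4,\delta_4}\cap\mathbb{R})\times\mathbb{T}_{\sigma_4}$, with $E$ the same constant as in the preceding lemma, which are \eqref{eq:difference} and \eqref{eq:difference derivative}.
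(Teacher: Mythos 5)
Your proposal is correct and follows essentially the same route as the paper: invert $\mathrm{Id}+C$ by a contraction argument, write $\tilde{\mathcal{E}}=\mathcal{E}\circ(\mathrm{Id}+\varphi)+L\circ(\mathrm{Id}+\varphi)-L$, and control the second summand with Lemma \ref{lem:Tech lemma composition} and the estimates of Proposition \ref{prop:Melnikov potential}. The ``delicate point'' you flag is genuine --- the global $G^{-7/2}$ bound on $C$ near the singularities is insufficient, and the sharper pointwise bound $\mathcal{O}(G^{-4})$ away from $v=\pm i/3$ (which follows from the weighted-norm estimate $\left\Vert A\circ(\mathrm{Id}+C)\right\Vert_{1/2}\leq KG^{-4}$ in the proof of Theorem \ref{thm:change of variables C}) is exactly what the paper tacitly uses when it asserts $\left\Vert C\circ(\mathrm{Id}+\varphi)\right\Vert_{0}\leq KG^{-4}$ in its own proof.
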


\begin{proof}
We look for a function $\varphi\left(v,\xi\right)$ such that $\left(\mathrm{Id}+C\right)\circ\left(\text{\ensuremath{\mathrm{Id}}}+\varphi\right)\left(v,\xi\right)=\left(v,\xi\right)$,
i.e., $\varphi$ must satisfy 
\[
v=v+\varphi\left(v,\xi\right)+C\left(v+\varphi\left(v,\xi\right),\xi\right)
\]
or what is the same 
\begin{equation}
\varphi\left(v,\xi\right)=-C\left(v+\varphi\left(v,\xi\right),\xi\right).\label{eq:final change of vars}
\end{equation}

In order to solve this fixed point equation we first use Lemma \ref{lem:Tech lemma composition}
to obtain that for $\text{\ensuremath{\varphi\in}}B\left(KG^{-4}\right)\subset\mathcal{Y}_{0,\kappa_{4},\delta_{4},\sigma_{4}}$
\[
\left\Vert C\circ\left(\mathrm{Id}+\varphi\right)\right\Vert _{0}\leq\left\Vert C\right\Vert _{0}\leq KG^{-4}
\]
so the map $\varphi\mapsto C\circ\left(\mathrm{Id}+\varphi\right)$
is well defined from $B\left(KG^{-4}\right)\subset\mathcal{Y}_{0,\kappa_{4},\delta_{4},\sigma_{4}}\rightarrow\mathcal{Y}_{0,\kappa_{4},\delta_{4},\sigma_{4}}$.
Moreover we get that there exists $b_{6}$ such that 
\[
\left\Vert C\circ\left(\mathrm{Id}+\varphi\right)_{|\varphi=0}\right\Vert _{0}\leq\frac{b_{6}}{2}G^{-4}.
\]
Since for $\varphi_{1},\varphi_{2}\in B\left(KG^{-4}\right)\subset\mathcal{Y}_{0,\kappa_{4},\delta_{4},\sigma_{4}}$
we have 
\[
\left\Vert C\circ\left(\mathrm{Id}+\varphi_{2}\right)-C\circ\left(\mathrm{Id}+\varphi_{1}\right)\right\Vert _{0}\leq KG^{-4}\left\Vert \varphi_{2}-\varphi_{1}\right\Vert _{0}
\]
we have shown the existence of a unique $\varphi\in B\left(b_{6}G^{-4}\right)\subset\mathcal{Y}_{0,\kappa_{4},\delta_{4},\sigma_{4}}$
solving \eqref{eq:final change of vars}.

Now that we have obtained the inverse change of variables, the bounds
\eqref{eq:difference} and \eqref{eq:difference derivative}
follow from direct application of Lemma \ref{lem:Tech lemma composition}
if we notice that 
\begin{align*}
\mathcal{E}\left(w\left(v,\xi\right),\xi\right) & =\left(\varDelta-L\right)\circ\left(\mathrm{Id}+\varphi\right)\left(v,\xi\right)\\
 & =\left(\tilde{\varDelta}\circ\left(\mathrm{Id}+C\right)-L\right)\circ\left(\mathrm{Id}+\varphi\right)\left(v,\xi\right)\\
 & =\tilde{\varDelta}\left(v,\xi\right)-L\circ\left(\mathrm{Id}+\varphi\right)\left(v,\xi\right)
\end{align*}
so 
\[
\tilde{\mathcal{E}}\left(v,\xi\right)=\mathcal{E}\left(v,\xi\right)+L\circ\left(\mathrm{Id}+\varphi\right)\left(v,\xi\right)-L\left(v,\xi\right).
\]
Then, the result follows from Lemma \ref{lem:Tech lemma composition}
and the estimates on Proposition \ref{prop:Melnikov potential}. 
\end{proof}

\section{\label{sec:Computation-of-the}Computation of the melnikov potential}

We devote this section to the computation of the Melnikov potential
$L\left(v,\xi\right)$ whose partial derivative with respect to $v$
gives us the first order term of the distance between the infinity
manifolds. From its definition \eqref{eq:Melnikov potential} we have
\begin{align*}
L\left(v,\xi\right) & =\int_{-\infty}^{\infty}V\left(\tilde{r}_{h}\left(v+s\right),\xi+G^{3}s\right)\text{d}s\\
 & =\int_{-\infty}^{\infty}V\left(\tilde{r}_{h}\left(s\right),\xi+G^{3}\left(s-v\right)\right)\text{d}s.
\end{align*}
Expanding in Taylor series the square root in \eqref{eq:potential_main_order_term}
we obtain that
\[
V\left(\tilde{r}_{h}\left(s\right),\xi+G^{3}\left(s-v\right)\right)=-\sum_{k=1}^{\infty}\left(\begin{array}{c}
\frac{-1}{2}\\
k
\end{array}\right)\left(4G^{4}\right)^{-k}\int_{-\infty}^{\infty}\frac{\rho^{2k}\left(\xi+G^{3}\left(s-v\right)\right)\text{d}s}{\tilde{r}_{h}^{2k+1}\left(s\right)}.
\]
 Hence, expanding now the terms $\rho^{2k}$ in Fourier series we
get 
\[
L\left(v,\xi\right)=-\sum_{l\in\mathbb{Z}}e^{il\left(\xi-G^{3}v\right)}\sum_{k=1}^{\infty}\left(\begin{array}{c}
\frac{-1}{2}\\
k
\end{array}\right)a_{l,k}\left(4G^{4}\right)^{-k}\int_{-\infty}^{\infty}\frac{e^{ilG^{3}s}\text{d}s}{\tilde{r}_{h}^{2k+1}\left(s\right)},
\]
where 
\[
a_{l,k}=\frac{1}{2\pi}\int_{0}^{2\pi}\rho^{2k}\left(\sigma\right)e^{-il\sigma}\text{d}\sigma.
\]
Since for all $\sigma\in\left[0,2\pi\right]$ we have $\left|\rho\right|<2$ we easily bound 

\begin{equation}\label{eq: Fourier coefficients for rho}
\left|a_{l,k}\right|\leq4^{k}.
\end{equation}

Moreover, changing the integration variable to the eccentric anomaly $E$ defined
by $t=E-\sin E$ 
\[
\rho\left(E\right)=1-\cos E,
\]
we obtain that
\begin{equation}
a_{1,1}=-2J_{1}\left(1\right)\neq0
\end{equation}
where $J_{1}$ is the Bessel function of first kind.

Under the time reparametrization 
\[
s=\frac{1}{2}\left(\tau+\frac{\tau^{3}}{3}\right),
\]
we can write
\begin{equation}
\begin{split}L\left(v,\xi\right) & =-2\sum_{l\in\mathbb{Z}}e^{il\left(\xi-G^{3}v\right)}\sum_{k=1}^{\infty}\left(\begin{array}{c}
\frac{-1}{2}\\
k
\end{array}\right)a_{l,k}G^{4k}\int_{-\infty}^{\infty}\frac{e^{ilG^{3}\left(\tau+\frac{\tau^{3}}{3}\right)/2}\text{d}\tau}{\left(\tau-i\right)^{2k}\left(\tau+i\right)^{2k}}\\
 & =-2\sum_{l\in\mathbb{Z}}e^{il\left(\xi-G^{3}v\right)}\sum_{k=1}^{\infty}\left(\begin{array}{c}
\frac{-1}{2}\\
k
\end{array}\right)a_{l,k}G^{4k}I\left(l,k\right)\\
 & =\sum_{l\in\mathbb{Z}}L^{[l]}e^{il\left(\xi-G^{3}v\right)}.
\end{split}
\label{eq:Melnikov expansion}
\end{equation}
The harmonic with $l=0$ is readily bounded using that 
\[
I\left(0,k\right)=\sqrt{\pi}\frac{\varGamma\left(2k-1/2\right)}{\varGamma\left(2k\right)},
\]
where $\varGamma$ stands for the Gamma function.

A standard computation shows that $L^{[l]}=L^{[-l]}$ so we focus
only on the case $l>0.$ The next proposition, which can be deduced
from Propositions 19 and 22 in \cite{Delshams2019} gives estimates
for $\left|I\left(l,k\right)\right|$ and the asymptotic first order
term for $I\left(1,1\right)$ which we use to identify the main term
in $L^{[1]}\left(v,\xi\right).$
\begin{prop}
\label{prop:Integrales Tere y Amadeu}Let $G$ be large enough, then
the estimate 
\[
\left|I\left(l,k\right)\right|\leq8e^{l}G^{3k-3/2}e^{\frac{-lG^{3}}{3}},
\]
holds for $l\geq1,$ $k\geq1$. Moreover we have that 
\[
I\left(1,1\right)=\sqrt{\pi}\left(\frac{G}{2}\right)^{3/2}e^{\frac{-G^{3}}{3}}\left(1+\mathcal{O}\left(G^{-3/2}\right)\right).
\]
\end{prop}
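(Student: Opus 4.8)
The plan is to treat $I(l,k)$ as an oscillatory integral with a large parameter and to use the crucial feature that the poles of the integrand coincide with the saddle points of the phase. Recall from \eqref{eq:Melnikov expansion} that, writing $\rho=lG^{3}/2$ and $\phi(\tau)=\tau+\tau^{3}/3$,
\[
I(l,k)=\int_{\mathbb{R}}\frac{e^{i\rho\phi(\tau)}}{(\tau-i)^{2k}(\tau+i)^{2k}}\,d\tau,
\]
and $\phi'(\tau)=1+\tau^{2}$ vanishes exactly at $\tau=\pm i$. For $l\ge 1$ we work near the saddle $\tau=i$: since $\phi(i)=2i/3$, we get $i\rho\phi(i)=-lG^{3}/3$, which is the origin of the exponentially small factor $e^{-lG^{3}/3}$ in the statement.

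First I would deform the contour from $\mathbb{R}$ into the upper half plane to a path $\Gamma$ that reaches the saddle $\tau=i$ along the local steepest-descent direction, passes it through a small indentation avoiding the pole, and escapes to infinity inside the sectors where $\mathrm{Re}(i\tau^{3})\to-\infty$ (so the integrand decays superexponentially there). Setting $\zeta=\tau-i$ and Taylor-expanding $\phi(\tau)=2i/3+i\zeta^{2}+\zeta^{3}/3$, one checks that the steepest-descent direction at the saddle is the real $\zeta$-axis, and that the part of $\Gamma$ staying at a fixed positive distance from $\tau=i$ contributes only $O\!\left(e^{-\rho(2/3+c)}\right)$ for some $c>0$. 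Hence everything reduces to the local model integral
\[
I(l,k)=e^{-lG^{3}/3}\int_{\Gamma_{\mathrm{loc}}}\frac{e^{-\rho\zeta^{2}}\,e^{i\rho\zeta^{3}/3}}{\zeta^{2k}(2i+\zeta)^{2k}}\,d\zeta+(\text{exponentially smaller}).
\]

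Next I would rescale $\zeta=\rho^{-1/2}w$, turning the local integral into $\rho^{\,k-1/2}$ times $\int_{\Gamma'}w^{-2k}(2i+\rho^{-1/2}w)^{-2k}e^{-w^{2}}e^{i\rho^{-1/2}w^{3}/3}\,dw$ over a fixed contour $\Gamma'$, namely the real line indented around $w=0$. On this scale the cubic phase correction is $O(\rho^{-1/2})=O(l^{-1/2}G^{-3/2})$ and $(2i+\rho^{-1/2}w)^{-2k}=(2i)^{-2k}\bigl(1+O(\rho^{-1/2})\bigr)$, so to leading order the model integral equals $(2i)^{-2k}\int_{\Gamma'}w^{-2k}e^{-w^{2}}\,dw$, which is computed by repeated integration by parts to $(-1)^{k}2^{k}\sqrt{\pi}/(2k-1)!!$ (the odd residue at $w=0$ vanishes by parity, so the side of the indentation is irrelevant). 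Collecting constants gives
\[
I(l,k)=\frac{\sqrt{\pi}}{2^{k}(2k-1)!!}\,\rho^{\,k-1/2}\,e^{-lG^{3}/3}\bigl(1+O(\rho^{-1/2})\bigr);
\]
since $\rho^{\,k-1/2}=(l/2)^{k-1/2}G^{3k-3/2}$ and $(l/2)^{k-1/2}/\bigl(2^{k}(2k-1)!!\bigr)$ is bounded uniformly in $k$ by a constant multiple of $e^{l}$ (the factorial beats the power), this yields $|I(l,k)|\le 8e^{l}G^{3k-3/2}e^{-lG^{3}/3}$, and for $(l,k)=(1,1)$ it gives $I(1,1)=\sqrt{\pi}\,(G/2)^{3/2}e^{-G^{3}/3}\bigl(1+O(G^{-3/2})\bigr)$. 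Alternatively, all this bookkeeping is exactly what Propositions 19 and 22 of \cite{Delshams2019} provide, so after the substitution above one may simply invoke them.

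The main obstacle is precisely the coincidence of the saddles $\tau=\pm i$ with the high-order poles of the integrand: the classical stationary-phase estimates do not apply verbatim, the indentation of $\Gamma$ around the pole must be controlled so that the genuine leading term is the model integral above, and the error must be kept uniform in both $l\ge 1$ and $k\ge 1$. The uniformity in $k$ is the delicate point, forcing one to track the growth of $(2k-1)!!$ against the decay of $(2i)^{-2k}$; this cancellation is what makes a $k$-independent prefactor in the bound possible.
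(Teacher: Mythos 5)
The paper does not actually prove this proposition: it only remarks that the estimates ``can be deduced from Propositions 19 and 22 in \cite{Delshams2019}'', so your closing sentence --- invoking that reference after the substitution $s=\tfrac12(\tau+\tau^{3}/3)$ --- is exactly the paper's route. Your self-contained steepest-descent sketch is nevertheless correct and is the argument underlying the cited propositions: the saddle of $\phi(\tau)=\tau+\tau^{3}/3$ at $\tau=i$ coincides with the pole, $i\rho\phi(i)=-lG^{3}/3$ produces the exponential, the residue of $w^{-2k}e^{-w^{2}}$ vanishes so the side of the indentation is irrelevant, the repeated integration by parts giving $(-2)^{k}\sqrt{\pi}/(2k-1)!!$ is right, and the constants assemble correctly (indeed $(2i)^{-2k}(-2)^{k}=2^{-k}$, which yields $\sqrt{\pi}\,(G/2)^{3/2}e^{-G^{3}/3}$ for $(l,k)=(1,1)$). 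The one place a self-contained version would need genuine work is the uniformity you flag at the end: the crude bound $\left|(2i+\zeta)^{-2k}\right|\le 1$, or even keeping only the factor $4^{-k}$, is \emph{not} enough to deduce $|I(l,k)|\le 8e^{l}G^{3k-3/2}e^{-lG^{3}/3}$, because $(l/2)^{k-1/2}4^{-k}=(l/8)^{k}\sqrt{2/l}$ is unbounded in $k$ as soon as $l>8$; one must retain the full cancellation $1/(2k-1)!!$ of the model integral uniformly in $k$ and $l$, and control the corrections coming from the cubic phase and from expanding $\bigl(1+\zeta/(2i)\bigr)^{-2k}$, whose relative error is $O(k\rho^{-1/2})$ rather than $O(\rho^{-1/2})$. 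That bookkeeping is precisely what the cited propositions supply, so as written your proposal is on par with what the paper itself offers, and strictly more informative about where the estimate comes from.
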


For $l=1$ we have 
\[
L^{[1]}=-2\left(-\frac{1}{2}a_{1,1}G^{-4}I_{1,1}+\sum_{k=2}^{\infty}\left(\begin{array}{c}
\frac{-1}{2}\\
k
\end{array}\right)a_{1,k}G^{4k}I\left(1,k\right)\right).
\]
Using Proposition \ref{prop:Integrales Tere y Amadeu} and the estimate
in \eqref{eq: Fourier coefficients for rho} we have that 
\begin{align*}
\left|\sum_{k=2}^{\infty}\left(\begin{array}{c}
\frac{-1}{2}\\
k
\end{array}\right)a_{1,k}G^{4k}I\left(1,k\right)\right| & \leq8e^{1/2}e^{\frac{-G^{3}}{3}}G^{-\nicefrac{3}{2}}\sum_{k=2}^{\infty}G^{-k}\\
 & \leq16e^{1/2}e^{\frac{-G^{3}}{3}}G^{-\nicefrac{7}{2}}.
\end{align*}
Therefore 
\[
L^{[1]}=a_{1,1}\sqrt{\pi}2^{-\nicefrac{3}{2}}G^{-\nicefrac{5}{2}}e^{\frac{-G^{3}}{3}}\left(1+\mathcal{O}\left(G^{-1}\right)\right).
\]

For $l\geq2$ we have 
\[
L^{[l]}=-2\sum_{k=1}^{\infty}\left(\begin{array}{c}
\frac{-1}{2}\\
k
\end{array}\right)a_{l,k}G^{-4k}I_{l,k}
\]
and again from Proposition \ref{prop:Integrales Tere y Amadeu} and
the estimate in \eqref{eq: Fourier coefficients for rho} we obtain
\[
\left|L^{[l]}\right|\leq32e^{l-\nicefrac{1}{2}}G^{-\nicefrac{5}{2}}e^{\frac{-lG^{3}}{3}}.
\]

From the estimates we have obtained for $\left|L^{[l]}\right|$ the
double series is absolutely convergent, which justify the expansions
in Taylor and Fouier series and the proof of Proposition \ref{prop:Melnikov potential}
is completed.

\section*{Acknowledgements}
M. G. and J. P. have received funding from the Euuropean Research Council (ERC) under the European Union{'}s Horizon 2020 research and innovation programme (grant agreement No 757802).
T. M. S. has been also partly supported by
the Spanish MINECO-FEDER Grant PGC2018-098676-B-100 (AEI/FEDER/UE) and the Catalan grant 2017SGR1049. M. G. and T. M. S. are supported by the Catalan Institution for Research
and Advanced Studies via an ICREA Academia Prize 2019. C. Vidal is supported by Fondecyt 1180288.

\bibliography{biblio}

\providecommand{\bysame}{\leavevmode\hbox to3em{\hrulefill}\thinspace}
\providecommand{\MR}{\relax\ifhmode\unskip\space\fi MR }
\providecommand{\MRhref}[2]{%
  \href{http://www.ams.org/mathscinet-getitem?mr=#1}{#2}
}
\providecommand{\href}[2]{#2}
\begin{thebibliography}{DKdlRS19}

\bibitem[AKN88]{Arnold1988}
Vladimir~I Arnold, VV~Kozlov, and AI~Neishtadt, \emph{Dynamical systems iii},
  Springer, 1988.

\bibitem[BDDV17]{Brandao2017}
L{\'u}cia Brand{\~a}o~Dias, Joaqu{\'\i}n Delgado, and Claudio Vidal,
  \emph{Dynamics and chaos in the elliptic isosceles restricted three-body
  problem with collision}, Journal of Dynamics and Differential Equations
  \textbf{29} (2017), no.~1, 259--288.

\bibitem[BDV08]{Brandao2008}
L{\'u}cia Brand{\~a}o~Dias and Claudio Vidal, \emph{Periodic solutions of the
  elliptic isosceles restricted three-body problem with collision}, Journal of
  Dynamics and Differential Equations \textbf{20} (2008), no.~2, 377--423.

\bibitem[BF04]{Baldoma2004a}
Inmaculada Baldom{\'a} and Ernest Fontich, \emph{Stable manifolds associated to
  fixed points with linear part equal to identity}, Journal of Differential
  Equations \textbf{197} (2004), no.~1, 45--72.

\bibitem[BFGS12]{Baldoma2012}
Inmaculada Baldom{\'a}, Ernest Fontich, Marcel Guardia, and Tere~M Seara,
  \emph{Exponentially small splitting of separatrices beyond {M}elnikov
  analysis: rigorous results}, Journal of Differential Equations \textbf{253}
  (2012), no.~12, 3304--3439.

\bibitem[BFM20a]{Baldoma2020a}
Inmaculada Baldom{\'a}, Ernest Fontich, and Pau Mart{\'\i}n, \emph{Invariant
  manifolds of parabolic fixed points (i). {E}xistence and dependence on
  parameters}, Journal of Differential Equations \textbf{268} (2020), no.~9,
  5516--5573.

\bibitem[BFM20b]{Baldoma2020b}
\bysame, \emph{Invariant manifolds of parabolic fixed points (ii).
  {A}pproximations by sums of homogeneous functions}, Journal of Differential
  Equations \textbf{268} (2020), no.~9, 5574--5627.

\bibitem[DKdlRS19]{Delshams2019}
Amadeu Delshams, Vadim Kaloshin, Abraham de~la Rosa, and Tere~M Seara,
  \emph{Global instability in the restricted planar elliptic three body
  problem}, Communications in Mathematical Physics \textbf{366} (2019), no.~3,
  1173--1228.

\bibitem[DS92]{Delshams1992}
Amadeu Delshams and Tere~M Seara, \emph{An asymptotic expression for the
  splitting of separatrices of the rapidly forced pendulum}, Communications in
  {M}athematical {P}hysics \textbf{150} (1992), 433--433.

\bibitem[DS97]{Delshams1997}
A.~Delshams and T.M. Seara, \emph{Splitting of separatrices in {H}amiltonian
  systems with one and a half degrees of freedom}, Math. Phys. Electron. J.
  \textbf{3} (1997), Paper 4, 40 pp. (electronic). \MR{MR1474213 (98k:58197)}

\bibitem[Ga12]{Gaivao2012}
Jos\'{e}~Pedro Gaiv\~{a}o, \emph{Exponentially small splitting of
  separatrices}, Bol. Soc. Port. Mat. (2012), no.~Special Issue, 181--184.
  \MR{3098782}

\bibitem[GaG11]{Gaivao2011}
Jos\'{e}~Pedro Gaiv\~{a}o and Vassili Gelfreich, \emph{Splitting of
  separatrices for the {H}amiltonian-{H}opf bifurcation with the
  {S}wift-{H}ohenberg equation as an example}, Nonlinearity \textbf{24} (2011),
  no.~3, 677--698. \MR{2765480}

\bibitem[Gel97]{Gelfreich1997}
Vassili~G Gelfreich, \emph{Melnikov method and exponentially small splitting of
  separatrices}, Physica D: Nonlinear Phenomena \textbf{101} (1997), no.~3-4,
  227--248.

\bibitem[Gel00]{Gelfreich00}
V.~G. Gelfreich, \emph{Separatrix splitting for a high-frequency perturbation
  of the pendulum}, Russ. J. Math. Phys. \textbf{7} (2000), no.~1, 48--71.
  \MR{1832773}

\bibitem[GK12]{GorodetskiK12}
A.~Gorodetski and V~Kaloshin, \emph{Hausdorff dimension of oscillatory motions
  for restricted three body problems}, Preprint, available at
  {http://www.terpconnect.umd.edu/~vkaloshi}, 2012.

\bibitem[GMS16]{Guardia2016}
Marcel Guardia, Pau Mart{\'\i}n, and Tere~M Seara, \emph{Oscillatory motions
  for the restricted planar circular three body problem}, Inventiones
  {M}athematicae \textbf{203} (2016), no.~2, 417--492.

\bibitem[GOS10]{Guardia2010}
Marcel Guardia, Carme Oliv{\'e}, and Tere~M Seara, \emph{Exponentially small
  splitting for the pendulum: a classical problem revisited}, Journal of
  nonlinear science \textbf{20} (2010), no.~5, 595--685.

\bibitem[Gua12]{Guardia2012}
Marcel Guardia, \emph{Splitting of separatrices in the resonances of nearly
  integrable {H}amiltonian systems of one and a half degrees of freedom}, arXiv
  preprint arXiv:1204.2784 (2012).

\bibitem[HMS88]{Holmes1988}
Philip Holmes, Jerrold Marsden, and Jurgen Scheurle, \emph{Exponentially small
  splittings of separatrices with applications to{ KAM} theory and degenerate
  bifurcations}.

\bibitem[LS80a]{SimoL80}
J.~Llibre and C.~Sim{\'o}, \emph{Oscillatory solutions in the planar restricted
  three-body problem}, Math. Ann. \textbf{248} (1980), no.~2, 153--184.
  \MR{573346 (81f:70009)}

\bibitem[LS80b]{Llibre1980}
Jaume Llibre and Carles Sim{\'o}, \emph{Some homoclinic phenomena in the
  three-body problem}, Journal of Differential Equations \textbf{37} (1980),
  no.~3, 444--465.

\bibitem[McG73]{McGehee1973}
Richard McGehee, \emph{A stable manifold theorem for degenerate fixed points
  with applications to celestial mechanics}, Journal of Differential Equations
  \textbf{14} (1973), no.~1, 70--88.

\bibitem[Moe84]{Moeckel84}
R.~Moeckel, \emph{Heteroclinic phenomena in the isosceles three-body problem},
  SIAM Journal of Mathematical Analysis \textbf{15} (1984), no.~5, 857--876.
  \MR{86j:58047}

\bibitem[Moe07]{Moeckel07}
R.~Moeckel, \emph{Symbolic dynamics in the planar three-body problem}, Regul.
  Chaotic Dyn. \textbf{12} (2007), no.~5, 449--475. \MR{2350333}

\bibitem[Mos73]{moser1973stable}
Jurgen~K Moser, \emph{Stable and random motions in dynamical systems, volume 77
  of {A}nnals of {M}athematics studies}, 1973.

\bibitem[MP94]{Martinez1994}
Regina Mart{\'\i}nez and Conxita Pinyol, \emph{Parabolic orbits in the elliptic
  restricted three body problem}, Journal of differential equations
  \textbf{111} (1994), no.~2, 299--339.

\bibitem[Nei84]{Neishtadt1984}
Anatoly~I Neishtadt, \emph{The separation of motions in systems with rapidly
  rotating phase}, Journal of Applied Mathematics and Mechanics \textbf{48}
  (1984), no.~2, 133--139.

\bibitem[Sit60]{Sitnikov1960}
K~Sitnikov, \emph{The existence of oscillatory motions in the three-body
  problem}, Dokl. Akad. Nauk SSSR, vol. 133, 1960, pp.~303--306.

\bibitem[Tre97]{Treshev97}
D.~Treschev, \emph{Separatrix splitting for a pendulum with rapidly oscillating
  suspension point}, Russ. J. Math. Phys. \textbf{5} (1997), no.~1, 63--98.

\end{thebibliography}
\bibliographystyle{amsalpha}

\end{document}